\newtheorem{lemma}{Lemma}[section]
\newtheorem{theorem}{Theorem}[section]
\newtheorem{remark}{Remark}[section]
\numberwithin{equation}{section}
\newcommand{\dis}{\displaystyle}
\newcommand{\R}{\mathbb{R}}
\newcommand{\CE}{\mathcal{E}}
\begin{document}

\title[The Vlasov-Poisson-Boltzmann system for the whole range of cutoff soft potentials]{The Vlasov-Poisson-Boltzmann System for the Whole Range of Cutoff Soft Potentials}

\author[Qinghua Xiao]{Qinghua Xiao}
\address[Qinghua Xiao]{\newline School of Mathematics and Statistics, Wuhan University, Wuhan 430072, China}
\email{pdexqh@hotmail.com}

\author[Linjie Xiong]{Linjie Xiong}
\address[Linjie Xiong]{\newline School of Mathematics and Statistics, Wuhan University, Wuhan 430072, China}
\email{xlj@whu.edu.cn}

\author[Huijiang Zhao]{Huijiang Zhao}
\address[Huijiang Zhao]{\newline School of Mathematics and Statistics, Wuhan University, Wuhan 430072, China}
\email{hhjjzhao@hotmail.com}
\date{\today}


\begin{abstract}
The dynamics of dilute electrons can be modeled by the fundamental one-species Vlasov-Poisson-Boltzmann system which describes mutual interactions of the electrons through collisions in the self-consistent electrostatic field. For cutoff intermolecular interactions, although there are some progress on the construction of global smooth solutions to its Cauchy problem near Maxwellians recently, the problem for the case of very soft potentials remains unsolved. By introducing a new time-velocity weighted energy method and based on some new optimal temporal decay estimates on the solution itself and some of its derivatives with respect to both the spatial and the velocity variables, it is shown in this manuscript that the Cauchy problem of the one-species Vlasov-Poisson-Boltzmann system for all cutoff soft potentials does exist a unique global smooth solution for general initial perturbation which is unnecessary to satisfy the neutral condition imposed in \cite{Duan-Yang-Zhao-M3AS-2013} for the case of cutoff moderately soft potentials but is assumed to be small in certain weighted Sobolev spaces. Our approach applies also to the case of cutoff hard potentials and thus provides a satisfactory global well-posedness theory to the one-species Vlasov-Poisson-Boltzmann system near Maxwellians for the whole range of cutoff intermolecular interactions in the perturbative framework.
\end{abstract}

\maketitle
\thispagestyle{empty}

\tableofcontents

\section{Introduction}
The dynamics of dilute electrons can be modeled by the fundamental one-species Vlasov-Poisson-Boltzmann system (called VPB system in the sequel for simplicity) which describes mutual interactions of the electrons through collisions in the self-consistent electrostatic field
\begin{equation} \label{VPB}
 \partial_tf+\xi \cdot\nabla_xf+\nabla_{x}\phi\cdot\nabla_{\xi}f=Q(f,f),
  \end{equation}
\begin{equation} \label{P}
 \Delta_x\phi(t,x)=\displaystyle\int_{\R^3}f(t,x,\xi) d\xi-n_b(x),\quad \lim\limits_{|x|\to+\infty}\phi(t,x)=0.
  \end{equation}
Here the unknown $f=f(t,x,\xi)\geq0$ is the density distribution function of electrons located at $x =(x_1,x_2,x_3)\in \R^3$ with velocity $\xi= (\xi_1, \xi_2, \xi_3)\in\R^3$ at time $t\geq 0$. The potential function $\phi=\phi(t,x)$ generating the self-consistent electrostatic field $\nabla_x\phi$ in \eqref{VPB} is coupled with $f(t,x,\xi)$ through the Poisson equation (\ref{P}) where $n_b(x)>0$ is the background charge which is assumed to be a positive constant in the rest of this manuscript denoting that the background charge is spatially homogeneous and in such a case, we can set $n_b(x)=1$ without loss of generality. The bilinear collision operator $Q(f,g)$ is defined by, cf. \cite{Cercignani-Illner-Pulvirenti-1994}, \cite{Glassey-1996}, \cite{Grad-1963}
\begin{equation}\label{Q}
Q(f,g)=\displaystyle\int_{\R^3}\int_{\mathbb{S}^2}|\xi-\xi_{\ast}|^{\gamma}q_0(\vartheta)
\big\{f(\xi_{\ast}')g(\xi')-f(\xi_{\ast})g(\xi)\big\} d\xi_{\ast}d\omega,
\end{equation}
where $(\xi,\xi_{\ast})$ and $(\xi',\xi_{\ast}')$, denoting velocities of two particles before and after their
collisions respectively, satisfy
\begin{equation*}
\xi'=\xi-[(\xi-\xi_{\ast})\cdot\omega]\omega,\quad
\xi'_{\ast}=\xi_{\ast}+[(\xi-\xi_{\ast})\cdot\omega]\omega,\quad \omega\in\mathbb{S}^2,
\end{equation*}
which follows from the conservation of momentum and kinetic energy
\begin{equation*}
 \xi+\xi_{\ast} =\xi'+\xi'_{\ast},\quad|\xi|^2+|\xi_{\ast}|^2 =|\xi'|^2+|\xi'_{\ast}|^2.
\end{equation*}
Consequently, the identity $ |\xi-\xi_{\ast}|=|\xi'-\xi'_{\ast}|$ holds.

The non-negative cross-section ${\mathcal{B}}(\xi-\xi_\ast,\vartheta)=|\xi-\xi_{\ast}|^{\gamma}q_0(\vartheta)$ in (\ref{Q}) depends only on the relative velocity $ |\xi  -\xi_{\ast}|$ and on the deviation angle $\vartheta$ given by $\cos\vartheta=(\xi-\xi_{\ast})\cdot\omega/|\xi-\xi_{\ast}|$. Throughout this manuscript, such a cross-section is assumed to satisfy Grad's angular cutoff assumption, cf. \cite{Grad-1963}
$$
0 \leq q_0(\vartheta)\leq C|\cos\vartheta|
$$
with $C>0$ being some positive constant. The exponent $\gamma\in (-3,1]$ is determined by the potential of intermolecular forces, which is classified into the soft potential case for $-3<\gamma<0$, the Maxwell molecular case for $\gamma=0$, and the hard potential case for $0<\gamma\leq1$ which includes the hard sphere model with $\gamma=1$ and $q_0(\vartheta)=C|\cos\vartheta|$. For the soft potentials, the case $-2\leq\gamma<0$ is called the moderately soft potentials while $-3<\gamma<-2$ is called the very soft potentials, cf. \cite{Villani-2002}.

The one-species VPB system \eqref{VPB}, \eqref{P} can be thought as a reduced model of the following two-species VPB system which describes the dynamics of two-species charged dilute particles (e.g., electrons and ions) under the influence of the interactions with themselves through collisions and their self-consistent electrostatic field
\begin{equation}\label{2-VPB}
\left\{
\begin{array}{l}
\partial_tf_++ \xi \cdot\nabla_x f_+-\nabla_x\phi\cdot\nabla_\xi f_+=Q(f_+,f_+)+Q(f_+,f_-),\\[2mm]
\partial_tf_-+ \xi \cdot\nabla_x f_-+\nabla_x\phi\cdot\nabla_{\xi}f_-=Q(f_-,f_+)+Q(f_-,f_-),
\end{array}
\right.
\end{equation}
\begin{equation}\label{2-P}
-\Delta\phi(t,x)=\int_{{\mathbb{R}^3}}(f_+(t,x,\xi)-f_-(t,x,\xi))d\xi,\quad \lim\limits_{|x|\to+\infty}\phi(t,x)=0.
\end{equation}
Here $f_\pm(t, x, \xi)\geq 0$ are the density distribution functions for the ions $(+)$ and electrons $(-)$ respectively, at time $t\geq 0$, position $x=(x_1,x_2,x_3)\in \mathbb{R}^3$, and  velocity $\xi=(\xi_1, \xi_2, \xi_3)\in \mathbb{R}^3$.
Here, in both \eqref{VPB}-\eqref{P} and \eqref{2-VPB}-\eqref{2-P}, all the physical parameters, such as the particle masses $m_\pm$, their charges $e_\pm$, together with some other involving constants such as the universal constant $4\pi$, etc., have been chosen to be unit for simplicity of presentation and also without loss of generality.

In physical situations the ion mass is usually much larger than the electron mass so that the electrons
move much faster than the ions. Thus, the ions are often described by a fixed ion background $n_b(x)$ and only the electrons move. For such a case, the two-species VPB system \eqref{2-VPB}-\eqref{2-P} can be reduced to the one-species VPB system \eqref{VPB}-\eqref{P}.

What we are interested in this paper is on the construction of global smooth solutions to the Cauchy problem of the VPB system \eqref{VPB}-\eqref{P} and \eqref{2-VPB}-\eqref{2-P} for cutoff intermolecular interactions. Notice that, as shown in \cite{Wang-JDE-2013} for the the two-species VPB system \eqref{2-VPB}-\eqref{2-P} for the hard sphere model, cf. also the corresponding result obtained in \cite{Wang-SIMA-2012} for the two-species Vlasov-Poisson-Landau system, the electrostatic field $\nabla_x\phi$ enjoys much better temporal decay estimates than the one-species VPB system \eqref{VPB}-\eqref{P} which is due to the cancelation of the different species of charged particles and since, as will be explained later, the temporal decay property of the electrostatic filed $\nabla_x\phi$ plays an essential role in establishing the global well-posedness theory of the VPB systems \eqref{VPB}-\eqref{P} and \eqref{2-VPB}-\eqref{2-P} in the perturbative framework, thus the problem on the global solvability of the one-species VPB system \eqref{VPB}-\eqref{P} is much harder then the two-species VPB system \eqref{2-VPB}-\eqref{2-P} and the main purpose of our present paper is concerned with the Cauchy problem of the one-species VPB system \eqref{VPB}, \eqref{P} with prescribed initial data
\begin{equation} \label{initial}
 f(0,x,\xi)=f_0(x,\xi)
\end{equation}
around the following normalized global Maxwellian
\begin{equation*}
  {\bf M}(\xi)=(2\pi)^{-\frac{3}{2}}\exp{\left(-\frac{|\xi|^2}{2}\right)}.
\end{equation*}
For this purpose, as in \cite{Grad-1963, Ukai-1986}, if we define the perturbation $u=u(t,x,\xi)$ by
\begin{equation*}
  f(t,x,\xi)={\bf M}+{\bf M}^{\frac{1}{2}}u(t,x,\xi),
\end{equation*}
then the Cauchy problem \eqref{VPB}, \eqref{P}, \eqref{initial} is reformulated as
\begin{equation}\label{u}
\begin{cases}
   \partial_tu+\xi\cdot\nabla_xu+\nabla_x\phi\cdot\nabla_{\xi}u-\frac{1}{2}\xi\cdot\nabla_x\phi u- \nabla_x\phi\cdot\xi {\bf M}^{\frac{1}{2}}-{\bf L}u=\Gamma(u,u),\\
   \triangle_x\phi(t,x)=\dis\int_{\R^3}{\bf M}^{\frac{1}{2}}(\xi)u(t,x,\xi)d\xi,\quad \lim\limits_{|x|\to+\infty}\phi(t,x)=0,\\
   u(0,x,\xi)=u_0(x,\xi)={\bf M}^{-\frac{1}{2}}(f_0-{\bf M}).
\end{cases}
\end{equation}
Here the linearized collision operator ${\bf L}$ and the nonlinear collision term $\Gamma$ are
defined by
\begin{equation*}
{\bf L}u={\bf M}^{-\frac{1}{2}}\left[{Q\left({\bf M},{\bf M}^{\frac{1}{2}}u\right)+ Q\left({\bf M}^{\frac{1}{2}}u,{\bf M}\right)}\right],
\end{equation*}
and
\begin{equation*}
\Gamma(u,u)={\bf M}^{-\frac{1}{2}}Q\left({\bf M}^{\frac{1}{2}}u,{\bf M}^{\frac{1}{2}}u\right),
\end{equation*}
respectively. It is well known, cf. \cite{Cercignani-Illner-Pulvirenti-1994, Glassey-1996, Grad-1963, Guo-ARMA-2003, Ukai-1986} that the linearized collision operator ${\bf L}$ is non-negative with its null space $\mathcal{N}$ being given by
\begin{equation*}
  {\mathcal{ N}}={\textrm{Span}}\left\{{\bf M}^{\frac{1}{2}}, ~\xi_i{\bf M}^{\frac{1}{2}}(1\leq i\leq3),\left(|\xi|^2-3\right){\bf M}^{\frac{1}{2}}\right\}.
\end{equation*}
Moreover, ${\bf L}$ can be decomposed as ${\bf L}= -\nu +K$ with
\begin{equation*}
\nu(\xi)=\iint_{\R^3\times{\mathbb{S}^2}}|\xi-\xi_{*}|^{\gamma}q_0(\vartheta){\bf M}(\xi_{*})d\omega d\xi_{*}\sim(1+|\xi|)^{\gamma}
\end{equation*}
and
\begin{equation*}
\begin{split}
Ku(\xi)=&\iint_{\R^3\times{\mathbb{S}^2}}|\xi-\xi_{*}|^{\gamma}q_0(\vartheta){\bf M}^{\frac12}(\xi_{*})
\left\{{\bf M}^{\frac{1}{2}}(\xi'_{*})u(\xi')+{\bf M}^{\frac12}(\xi')u(\xi'_{*})
-{\bf M}^{\frac12}(\xi)u(\xi_{*})\right\}d \omega d\xi_{*}\\
=&\int_{\R^3}K(\xi,\xi_{*})u(\xi_{*})d\xi_{*}.
\end{split}
\end{equation*}
Here and in the rest of this paper, $A\sim B$ means that there exists some generic positive constant $C>0$ such that $C^{-1}B\leq A\leq CB$.

If we define ${\bf P}$ as the orthogonal projection from $L^2\left(\R^3_\xi\right)$ to $\mathcal{N}$, then for any given function $u(t,x,\xi)\in L^2\left(\R^3_\xi\right)$, one has
\begin{eqnarray*}
{\bf P}u &=&{a(t,x){\bf M}^{\frac{1}{2}} + b(t,x)\cdot\xi{\bf M}^{\frac{1}{2}}+c(t,x)\left(|\xi|^2-3\right)}{\bf M}^{\frac{1}{2}},\\
a&=&\dis\int_{\R^3}{\bf M}^{\frac{1}{2}}ud\xi=\dis\int_{\R^3}{\bf M}^{\frac{1}{2}}{\bf P}u d\xi,\\
b_i&=&\dis\int_{\R^3}\xi_i{\bf M}^{\frac{1}{2}}u d\xi =\dis\int_{\R^3}\xi_i{\bf M}^{\frac{1}{2}}{\bf P}ud\xi,
\quad  i =1,2,3,\\
c&=&\dis\frac{1}{6}\dis\int_{\R^3}\left(|\xi|^2-3\right){\bf M}^{\frac{1}{2}}u d\xi
=\frac{1}{6}\dis\int_{\R^3}\left(|\xi|^2-3\right){\bf M}^{\frac{1}{2}}{\bf P}ud\xi.
\end{eqnarray*}
Therefore, we have the following macro-micro decomposition with respect to a given global Maxwellian, cf. \cite{Guo-IUMJ-2004}:
\begin{equation*}
 u(t,x,\xi)={\bf P}u(t,x,\xi)+\{{\bf I}-{\bf P}\}u(t,x,\xi)\equiv u_1(t,x,\xi)+u_2(t,x,\xi),
 \end{equation*}
where ${\bf I}$ denotes the identity operator, ${\bf P}u\equiv u_1$ and $\{{\bf I}-{\bf P}\}u\equiv u_2$ are called the macroscopic and the microscopic component of $u$, respectively.

Under the above micro-macroscopic decomposition, ${\bf L}$ is locally coercive, cf. \cite{Cercignani-Illner-Pulvirenti-1994, Glassey-1996, Grad-1963, Guo-ARMA-2003}, in the sense that
\begin{equation}\label{coercive}
-\langle u,{\bf L}u\rangle \gtrsim|\{{\bf I}-{\bf P}\}u|_{\nu}^2=\left\|\sqrt{\nu} u_2\right\|_{L^2\left(\R^3_\xi\right)},\quad
\nu(\xi)\sim(1+|\xi|)^{\gamma}.
\end{equation}
Here $\langle\cdot,\cdot\rangle$ denotes the inner product in $L^2(\R^3_{\xi})$, $A\gtrsim B$ means that there is a generic positive constant $C>0$ such that $A\geq CB$ and $A\lesssim B$ can be defined similarly.

The problem on the global solvability of the VPB system \eqref{VPB}-\eqref{P} and/or \eqref{2-VPB}-\eqref{2-P} near Maxwellians have been studied by many authors and to explain the main difficulties encountered and the main problem we want to study, we will outline the main ideas developed recently on the construction of global smooth solutions to some complex kinetic equations and sketch some former results closely related to the theme of this manuscript. In fact in the perturbative context, there have been extensive investigations recently on the construction of global solutions to some complex kinetic equations, such as the Vlasov-Poisson-Landau system \cite{Duan-Yang-Zhao-VPL, Guo-JAMS-2012, Lei-Xiong-Zhao-1VPL, Strain-Zhu-ARMA-2013, Wang-SIMA-2012}, the Vlasov-Poisson-Boltzmann system \cite{Duan-Liu-CMP-2013, Duan-Strain-ARMA-2011, Duan-Yang-SIMA-2010, Duan-Yamg-Zhao-JDE-2012, Duan-Yang-Zhao-M3AS-2013, Guo-CPAM-2002, Guo-Jang-CMP-2010, Wang-JDE-2013, Xiao-Xiong-Zhao-JDE-2013, Xiao-Xiong-Zhao-Science China-2014, Yang-Yu-CMP-2011, Yang-Yu-Zhao-ARMA-2006, Yang-Zhao-CMP-2006, Zhang-JDE-2009},  the Vlasov-Maxwell-Landau system \cite{Duan-AIHP-AN-2014, Lei-Zao-2VML}, and the Vlasov-Maxwell-Boltzmann system \cite{Duan-SIMA-2011, Duan-Liu-Yang-Zhao-KEM-2013, Duan-Strain-CPAM-2011, Guo-Invent-Math-2003, Guo-Strain-CMP-2012, Jang-ARMA-2009, Lei-Zhao-2013-1VMB, Strain-CMP-2006}, etc., based on the energy method introduced in \cite{Guo-IUMJ-2004, Liu-Yang-Yu-PhD-2004, Liu-Yu-CMP-2004} for the Boltzmann equation. The main difficulty involved in dealing with such a type of problem lies in how to control the possible growth of the solutions induced by nonlinearity of the equations under consideration, especially
\begin{itemize}
\item[$\bullet$]the degeneration of the dissipation \eqref{coercive} at large velocity $\xi$ for the linearized Boltzmann collision operator ${\bf L}$ for soft potentials $-3<\gamma<0$ or the degeneration of the corresponding dissipative estimate at large velocity $\xi$ for the linearized Landau collision operator corresponding to the Coulomb potential, cf. \cite{Degond-Lemou-ARMA-1997, Guo-CMP-2002, Guo-JAMS-2012, Lions-1994, Strain-Guo-ARMA-2008, Strain-Zhu-ARMA-2013, Wang-SIMA-2012};
\item[$\bullet$] the velocity-growth of the nonlinear term related to the Lorenz or the electrostatic force with the velocity-growth rate $|\xi|$, such as the term $\xi\cdot \nabla_x\phi u$ in \eqref{u}$_1$.
\end{itemize}
An important progress in this direction is due to Y. Guo's work on the two-species Vlasov-Poisson-Landau system \cite{Guo-JAMS-2012} in a periodic box for initial data with small weighted $H^2-$norms. The main ideas developed in \cite{Guo-JAMS-2012} are the following;
\begin{itemize}
\item[$\bullet$] a new exponential weight of electric potential $e^{\pm\phi}$ is introduced to cancel the growth of the velocity in the nonlinear term $\mp\xi\cdot\nabla_x\phi u_\pm$,
\item[$\bullet$] a new time and velocity weighted energy method is designed which is based on a new velocity weight
$$
\overline{w}_{\ell-|\alpha|-|\beta|}(\xi)=\langle \xi\rangle^{-(\gamma+1)(\ell-|\alpha|-|\beta|)},\quad \langle \xi\rangle=\sqrt{1+|\xi|^2},\quad \ell\geq |\alpha|+|\beta|
$$
to capture the weak velocity diffusion in the linearized Landau kernel for the case of $-3\leq \gamma<-2$ and a decay of the electrostatic field $\nabla_x\phi$ to close the energy estimate.
\end{itemize}
Such a result is extended recently by R. Strain and K.-Y. Zhu in \cite{Strain-Zhu-ARMA-2013} and Y.-J. Wang in \cite{Wang-SIMA-2012} respectively to the case of the whole space by different approaches. The analysis in \cite{Strain-Zhu-ARMA-2013} is to combine the energy estimates with the linear decay analysis which requires some smallness assumption on the $Z_1$-norm of the initial data, while the analysis in \cite{Wang-SIMA-2012} removed such an assumption by an interesting observation that what one needed is just the temporal decay rate of the electrostatic field $\nabla_x\phi$ rather the whole solutions. Based on such an observation, Y.-J. Wang decoupled the two-species Vlasov-Poisson-Landau system into two independent subsystems (one is the Landau system and the other one is a system almost like one-species Vlasov-Poisson-Landau system) to yield better temporal decay estimates on the electrostatic potential $\phi(t,x)$, which is due to the cancelation effect between different species of charged particles, cf. \cite{Wang-SIMA-2012} for details. In all these three manuscripts, the fact that
\begin{equation}\label{difference}
\left\|\left(\partial_t\phi(t),\nabla_x\phi(t)\right)\right\|_{L^\infty\left({\mathbb{R}}^3_x\right)}\in L^1\left({\mathbb{R}}^+\right)
\end{equation}
plays an essential role in their analysis.

We note, however, that the above argument can not be adopted directly to deal with the one-species VPB system \eqref{VPB}-\eqref{P} since the temporal decay estimates on the corresponding linearized solution operator performed in \cite{Duan-Yamg-Zhao-JDE-2012, Duan-Yang-Zhao-M3AS-2013} tells us that, even under the neutral condition on the initial perturbation $u_0(x,\xi)$
\begin{equation}\label{neutral}
\int_{{\mathbb{R}}_x^3}\int_{{\mathbb{R}}_\xi^3}{\bf M}^{\frac 12}(\xi)u_0(x,\xi)d\xi dx=0,
\end{equation}
one can only deduce that $\partial_t\phi(t,x)$ decays at most like $\left\|\partial_t\phi(t)\right\|_{L^\infty\left({\mathbb{R}}^3_x\right)}\leq O(1)(1+t)^{-1}$. Consequently one can not hope that the estimate \eqref{difference} holds and the arguments developed in \cite{Guo-JAMS-2012, Strain-Zhu-ARMA-2013, Wang-SIMA-2012}, which have been proved to be effective in dealing with the two-species Vlasov-Poisson-Landau system, can not be used any longer to treat the one-species VPB system \eqref{VPB}-\eqref{P}.

Even so, for the case of cutoff hard potentials, although the arguments developed in \cite{Guo-JAMS-2012, Strain-Zhu-ARMA-2013, Wang-SIMA-2012} can not be used directly, one can use the smallness of $\|\partial_t\phi\|_{L^{\infty}}$ and the stronger dissipation of linearized Boltzmann operator ${\bf L}$ for hard potential case, {\it i.e.} the inequality
\begin{equation*}
\mathcal{E}_{N,l}(t)\lesssim \mathcal{D}_{N,l}(t)+\left\|\nabla_x\phi\right\|^2+\|(a,b,c)\|^2
\end{equation*}
with the energy functional $\mathcal{E}_{N,l}(t)$ satisfying
$$
\mathcal{E}_{N,l}(t)\sim\displaystyle\sum_{|\alpha|\leq N<l}\left\|
\partial^{\alpha}\nabla\phi(t)\right\|^2+\displaystyle\sum_{|\alpha|+|\beta|\leq  N<l}\left\|\langle\xi\rangle^{l-|\alpha|-|\beta|}
\partial^{\alpha}_{\beta}u(t)\right\|^2
$$
and the corresponding energy dissipation rate functional $\mathcal{D}_{N,l}(t)$ satisfying
$$
\mathcal{D}_{N,l}(t)\sim\displaystyle\sum_{1\leq|\alpha|\leq N<l}\left\|
\partial^{\alpha}\nabla\phi(t)\right\|^2+\displaystyle\sum_{1\leq|\alpha|\leq N<l}\left\|
\partial^{\alpha}u_1(t)\right\|^2+\displaystyle\sum_{|\alpha|+|\beta|\leq  N<l}\left\|\langle\xi\rangle^{l-|\alpha|-|\beta|}
\partial^{\alpha}_{\beta}u_2(t)\right\|^2_{\nu},
$$
to absorb the term $\|\partial_t\phi\|_{L^{\infty}}\mathcal{E}_{N,l}(t)$ except the term consisting of the $L^2-$norm of macroscopic terms $(a,b,c)$ and the electrostatic field $\nabla_x\phi$. Then, motivated by the arguments developed in \cite{Duan-Strain-ARMA-2011, Duan-Strain-CPAM-2011, Duan-Ukai-Yang-Zhao-CMP-2008, Yang-Yu-CMP-2011} to deduce the optimal temporal decay estimates on the solutions to the Boltzmann type equations with hard potential intermolecular interactions, one can deduce the desired optimal temporal decay estimates on the solutions of the one-species VPB system \eqref{u} and based on these optimal temporal decay estimates, one can thus close the whole analysis. Here the fact that for the hard potentials case, the arguments developed in \cite{Duan-Strain-ARMA-2011, Duan-Strain-CPAM-2011, Duan-Ukai-Yang-Zhao-CMP-2008, Yang-Yu-CMP-2011} can be used to get the optimal temporal decay estimates on both the solution $u(t,x,\xi)$ itself and some orders of its derivatives with respect to the spatial variable plays an essential role in the analysis. See \cite{Xiao-Xiong-Zhao-Science China-2014} for details (It is worth to pointing out that although \cite{Xiao-Xiong-Zhao-Science China-2014} deals with the non-cutoff hard potential case, the argument used there can be applied also to the cutoff hard potentials directly).

For cutoff soft potentials, the story is quite different. Although the arguments developed in \cite{Strain-KRM-2012, Strain-Guo-ARMA-2008} to deduce the temporal decay estimates on the solutions of the Boltzmann-type equations with soft potentials, which are based on a time-velocity splitting argument developed in \cite{Strain-Guo-ARMA-2008}, the temporal decay estimates on the solution operator of the corresponding linearized system, the energy method together with the Duhamel principle, can also be used to deduce the desired decay estimates on the solution $u(t,x,\xi)$ of the one-species VPB system \eqref{u} together with its certain orders of derivatives with respect to the $x-$variable, which are the key point to yield the global solvability result, one encounters the problem of the loss of $1-p$ order of the corresponding decay rates which is mainly due to the following inequality
\begin{equation*}
    \int_0^te^{-\lambda(t^p-\tau^p)}(1+\tau)^{-m}d\tau\lesssim (1+t)^{-m-p+1},\quad 0<p<1.
  \end{equation*}
A directly consequence of such a fact is that one can not hope to use the arguments developed in \cite{Strain-KRM-2012, Strain-Guo-ARMA-2008} to deduce the desired optimal temporal decay estimates on certain orders of derivatives of $u(t,x,\xi)$ with respect to $x-$variables as for the case of hard potentials and hence the argument which has been proved to be effective for the one-species VPB system \eqref{u} in \cite{Xiao-Xiong-Zhao-Science China-2014} for the hard potential case can not be used any longer. To overcome such a difficulty, the main idea in \cite{Duan-Yamg-Zhao-JDE-2012, Duan-Yang-Zhao-M3AS-2013} is to introduce a new time-velocity weighted energy method based on the following new time-velocity weight function
\begin{equation}\label{weight-DYZ}
\widetilde{w}_{\ell-|\beta|}(t,\xi)=
\langle\xi\rangle^{\gamma(|\beta|-\ell)}e^{\frac{q\langle\xi\rangle^2}{(1+t)^\vartheta}},
\quad \vartheta>0,\ q>0,\ \ell\geq |\beta|,
\end{equation}
where the role of the exponential factor $w^e_{\ell-|\beta|}(t,\xi)=e^{\frac{q\langle\xi\rangle^2}{(1+t)^\vartheta}}$ of the weight function $\widetilde{w}_{\ell-|\beta|}(t,\xi)$ is to yield an extra dissipative term like $(1+t)^{-(1+\vartheta)}\left\|\langle\xi\rangle w_{\ell-|\beta|}(t,\xi)\partial^\alpha_\beta u_2\right\|^2$, but, as pointed out in \cite{Duan-Yang-Zhao-M3AS-2013}, since, unlike the weight function $\overline{w}_{\ell-|\alpha|-|\beta|}(\xi)$, the corresponding algebraic factor $w_{|\beta|-\ell}^a(\xi)=\langle\xi\rangle^{\gamma(|\beta|-\ell)}$ varies only when the order of the $\xi-$derivatives changes, it indeed produces an additional difficulty on the nonlinear term $\nabla_x\phi\cdot\nabla_{\xi}u_2$ in the presence of the self-consistent electrostatic field $\nabla_x\phi$ for the one-species VPB system \eqref{VPB}-\eqref{P} with soft potentials. To obtain the velocity weighted derivative estimate on such a nonlinear term, one should put an extra negative-power function $\langle\xi\rangle^{\gamma}$ in front of $\nabla_{\xi}u_2$ so that the velocity growth $\langle\xi\rangle^{-\gamma}$ comes up to have a balance. Then, only if $-2\leq\gamma<0$, it is fortunate that the extra dissipative term mentioned above which contains the second-order moment $\langle\xi\rangle^{2}$ can be used to control such term provided that the electrostatic field $\nabla_x\phi$ decays sufficiently fast. Based on these ideas, the case for the cutoff moderately soft potentials, i.e. $-2\leq \gamma<0$ under Grad's cutoff assumption \cite{Grad-1963}, was solved in \cite{Duan-Yang-Zhao-M3AS-2013} provided that the initial perturbation $u_0(x,\xi)$ is assumed to satisfy the neutral condition \eqref{neutral} in addition to some usual smallness assumptions. It is worth to pointing out that the neutral condition \eqref{neutral} is imposed to guarantee that the electrostatic field $\nabla_x\phi$ decays sufficiently fast while the restriction on the range of $\gamma\in[-2,0)$ is an essential requirement of the argument used in \cite{Duan-Yang-Zhao-M3AS-2013}. Thus it is natural to ask the following two questions:
\begin{itemize}
\item[$\bullet$] Firstly, for the case of $-2\leq \gamma<0$, does similar global solvability result as obtained in \cite{Duan-Yang-Zhao-M3AS-2013} still hold even without the neutral condition \eqref{neutral}?

\item[$\bullet$] Secondly, for cutoff intermolecular interactions, how to deal with the case of the very soft potentials, i.e. $-3<\gamma<-2$, under Grad's cutoff assumption \cite{Grad-1963}?
\end{itemize}
For the first problem, partial result has been obtained in \cite{Xiao-Xiong-Zhao-JDE-2013} for the case of $-1\leq \gamma<0$ which is based on the same weight function $\widetilde{w}_{\ell-|\beta|}(t,\xi)$ defined in \eqref{weight-DYZ} and the main idea there is to deduce the almost optimal decay estimates on the $L^2-$norm of certain higher order $x-$derivatives of $u(t,x,\xi)$ and $\nabla_x\phi(t,x)$. And the main purpose of our present paper is to give a positive answer to the above two questions for the whole range of cutoff soft potentials without the neutral condition \eqref{neutral}.

Now we turn to sate our main result. To this end, motivated by \cite{Duan-Yamg-Zhao-JDE-2012, Duan-Yang-Zhao-M3AS-2013}, we introduce the following mixed time-velocity weight function
\begin{equation}\label{weight}
 w_{\ell}(t,\xi)=\langle\xi\rangle^{\frac{\gamma}{2}\ell}e^{\frac{q\langle\xi\rangle^2}{(1+t)^{\vartheta}}},
\end{equation}
where $\ell\in \R,~0<q\ll1$. We also define the temporal energy functional $\mathcal{E}_{\infty}(t)$ as follows
\begin{equation}\label{Energy}
  \begin{split}
         \mathcal{E}_{\infty}(t)=&\sup_{0\leq\tau\leq t}\left\{\sum_{(\alpha,\beta)\in U_{\alpha,\beta}}
     (1+\tau)^{r_{|\alpha|}}\left(\|w_{|\beta|-\ell}\partial^{\alpha}_{\beta}u\|^2
     +\|\nabla_x^{|\alpha|+1}\phi\|^2\right)\right.\\
     &\left.+\sum_{|\alpha|\leq N-2}(1+\tau)^{-\left(|\alpha|+\frac{3}{2}\right)+1-p}\left\|\partial^{\alpha}u_2\right\|^2
     +(1+\tau)^{-N+\frac{1}{2}+2(1-p)}\left\|\nabla_x^{N-2}\nabla_{\xi}u_2\right\|^2\right\},
  \end{split}
\end{equation}
where
\begin{equation}\label{decay_rates}
r_{|\alpha|}=\left\{
\begin{array}{ll}
|\alpha|+\frac{1}{2}, \quad& \text{{when}}\ (\alpha,\beta)\in U_{\alpha,\beta}^{\text{low}},\\[2mm]
\frac{N+2|\alpha|}{3}-\frac{1}{2},\quad& \text{{when}}\ (\alpha,\beta)\in U_{\alpha,\beta}^{\text{high}}
\end{array}
\right.
\end{equation}
with
\begin{eqnarray*}
U_{\alpha,\beta}&=&\left\{(\alpha,\beta)\mid|\alpha|+|\beta|\leq N\right\}
 =U_{\alpha,\beta}^{\text{low}}\cup U_{\alpha,\beta}^{\text{high}},\\
U_{\alpha,\beta}^{\text{low}}&\equiv& \left\{(\alpha,\beta)\mid|\alpha|+|\beta|\leq N-1\right\}\cup
 \left\{(\alpha,\beta)\mid|\alpha|+|\beta|=N, |\alpha|<N-2\right\},\\
U_{\alpha,\beta}^{\text{high}}&\equiv& \left\{(\alpha,\beta)\mid|\alpha|+|\beta|=N,N-2\leq|\alpha|\leq N\right\}.
\end{eqnarray*}
With the above preparations in hand, the main result of this paper is stated as follows. Some notations will be explained at the end of this section.
\begin{theorem}\label{Thm}
   Assume that $-3< \gamma<0$ and $N\geq 4$ and let $\frac{3}{4}< p<1$, $0<\vartheta\leq\frac{1}{9}$ be two given constants. For $l_1=\frac{2N-1}{1-p}$, $l_2>N+\frac{1}{2}$, and $\ell\geq\max\left\{ l_1, \frac{2(2p-1)l_2}{4p-3}+1,l_2-\frac{2}{\gamma}\right\}$, if we assume further that
   $$
   f_0(x,\xi)={\bf M}(\xi)+{\bf M}^{\frac{1}{2}}(\xi)u_0(x,\xi)\geq 0
   $$
   and that
   $$
   \epsilon_0=\dis\sum_{|\alpha|+|\beta|\leq N}\left\|w_{|\beta|-\ell,q}\partial^{\alpha}_{\beta}u_0\right\|^2+\|\langle\xi\rangle^{\frac{-l_2\gamma}{2}} u_0\|_{Z_1}^2
   $$
   is sufficiently small, then the Cauchy problem \eqref{u} admits a unique global solution $u(t,x,\xi)$ satisfying $f(t,x,\xi)={\bf M}(\xi)+{\bf M}^{\frac{1}{2}}(\xi)u(t,x,\xi)\geq0$ and
  \begin{equation}\label{decay}
    \CE_{\infty}(t)\lesssim \epsilon_0.
  \end{equation}
\end{theorem}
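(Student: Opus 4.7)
The plan is to prove \eqref{decay} by a global-in-time continuity argument: combined with standard local existence for \eqref{u} in the weighted space underlying $\CE_\infty(t)$, it suffices to show that if a smooth solution exists on $[0,T]$ with $\CE_\infty(T)\leq \epsilon$ for some sufficiently small $\epsilon>0$, then in fact $\CE_\infty(T)\lesssim\epsilon_0$, with the nonnegativity of $f$ propagated along characteristics from the assumption $f_0\geq 0$. The two main ingredients I would aim for are (i) a closed weighted energy-dissipation inequality based on the mixed time-velocity weight $w_{\ell}(t,\xi)$ from \eqref{weight}, and (ii) a sharp set of temporal decay estimates for $(\mathbf{P}u, \nabla_x\phi)$ at each spatial-derivative order, matching the rates $r_{|\alpha|}$ prescribed in \eqref{decay_rates}, together with slightly weaker rates for $u_2$ incorporating the $1-p$ loss.

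For step (i) I would apply $\partial^\alpha_\beta$ to \eqref{u}, multiply by $w^2_{|\beta|-\ell}(t,\xi)\partial^\alpha_\beta u$ and integrate, then sum over $(\alpha,\beta)\in U_{\alpha,\beta}$. An interaction-functional argument in the spirit of \cite{Guo-IUMJ-2004, Duan-Strain-ARMA-2011} recovers full macroscopic dissipation for $(\nabla_x a,\nabla_x b,\nabla_x c,\nabla_x^2\phi)$ on top of the local coercivity \eqref{coercive}. The decisive feature is that $\partial_t$ acting on the exponential factor $e^{q\langle\xi\rangle^2/(1+t)^\vartheta}$ generates the extra dissipation
\[
\frac{q\vartheta}{(1+t)^{1+\vartheta}}\bigl\|\langle\xi\rangle \, w_{|\beta|-\ell}(t,\xi)\,\partial^\alpha_\beta u\bigr\|^2,
\]
which reinstates a second-order velocity moment invisible to the degenerate weight $\nu(\xi)\sim\langle\xi\rangle^\gamma$. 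This is precisely what is needed to absorb the transport term $\nabla_x\phi\cdot\nabla_\xi u$: after one $\xi$-derivative the induced loss $\langle\xi\rangle^{-\gamma}$ is balanced against the algebraic part of $w_{|\beta|-\ell}$ provided $\ell\geq l_2-2/\gamma$, while the electrostatic prefactor $\|\nabla_x\phi\|_{L^\infty}$ is compensated by the $(1+t)^{-(1+\vartheta)}$ gain once the field decay coming from step (ii) is inserted.

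For step (ii) I would use a two-tier bootstrap. At the low-order level $U^{\mathrm{low}}_{\alpha,\beta}$, feed the $Z_1$-smallness of the data into a Duhamel representation with the linearized semigroup estimates of \cite{Duan-Yamg-Zhao-JDE-2012, Duan-Yang-Zhao-M3AS-2013}, obtaining the optimal rate $(1+t)^{-(|\alpha|+1/2)}$ for $\|\nabla_x^{|\alpha|}\mathbf{P}u\|^2+\|\nabla_x^{|\alpha|+1}\phi\|^2$. At the high-order level $U^{\mathrm{high}}_{\alpha,\beta}$ the rate $\frac{N+2|\alpha|}{3}-\frac{1}{2}$ is then produced by Sobolev interpolation between the low-order decay and the uniform-in-time bound on $\CE_\infty$. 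The auxiliary decay rates for $\partial^\alpha u_2$ and $\nabla_x^{N-2}\nabla_\xi u_2$ are obtained by the Strain--Guo time-velocity splitting, which for soft potentials incurs the well-known loss
\[
\int_0^t e^{-\lambda(t^p-\tau^p)}(1+\tau)^{-m}\,d\tau\lesssim (1+t)^{-m-p+1},
\]
motivating both the $(1-p)$ corrections in the definition of $\CE_\infty(t)$ and the choice $\tfrac{3}{4}<p<1$. Multiplying the weighted energy inequality from step (i) by $(1+t)^{r_{|\alpha|}}$, using the smallness of $\CE_\infty(t)^{1/2}$ to absorb all cubic nonlinear contributions into the dissipation, and integrating in $t$ would then close the bootstrap.

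The main obstacle, which is absent from \cite{Duan-Yang-Zhao-M3AS-2013, Xiao-Xiong-Zhao-JDE-2013}, is that without the neutral condition \eqref{neutral} one has only $\|\partial_t\phi(t)\|_{L^\infty}\lesssim(1+t)^{-1}$, so that $\nabla_x\phi$ fails to be $L^1_t L^\infty_x$ and the Guo--Strain-Zhu--Wang mechanism \cite{Guo-JAMS-2012, Strain-Zhu-ARMA-2013, Wang-SIMA-2012} via the potential-type weight $e^{\pm\phi}$ is unavailable. The whole architecture above, in particular the splitting $U_{\alpha,\beta}=U^{\mathrm{low}}_{\alpha,\beta}\cup U^{\mathrm{high}}_{\alpha,\beta}$ with two distinct decay scales, the interpolation constraint $\ell\geq 2(2p-1)l_2/(4p-3)+1$, and the $(1-p)$-losses embedded in $\CE_\infty(t)$, is designed precisely so that the field decay extracted directly from $\CE_\infty$ at the low-order level, coupled with the single additional exponential dissipation, controls all very-soft-potential losses uniformly on the whole range $-3<\gamma<0$.
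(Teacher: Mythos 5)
Your overall architecture (weighted energy method with the weight \eqref{weight}, Duhamel plus the linearized decay estimates for the optimal low-order rates, the Strain--Guo time--velocity splitting for the $(1-p)$-corrected microscopic rates, and a continuity argument closing $\CE_\infty\lesssim\epsilon_0+\CE_\infty^2$) coincides with the paper's. But there is a genuine gap at the decisive step: your claim that the high-order rates $r_{|\alpha|}=\frac{N+2|\alpha|}{3}-\frac12$ on $U^{\text{high}}_{\alpha,\beta}$ are ``produced by Sobolev interpolation between the low-order decay and the uniform-in-time bound on $\CE_\infty$'' does not work, and it bypasses the actual difficulty. The norms on the shell $|\alpha|+|\beta|=N$ with $N-2\le|\alpha|\le N$ carry \emph{different numbers of velocity derivatives}, so $\|w_{|\beta|-\ell}\partial^\alpha_\beta u_2\|$ is not an interpolant (in $x$-derivatives) of the pure spatial norm $\|w_{-\ell}\nabla_x^N u\|$ and lower-order mixed norms; moreover the best rate available for $\|w_{-\ell}\nabla_x^Nu\|^2$ is only $(1+t)^{-(N-\frac12)}$ (Lemma \ref{non-weight} stops at $m=N-1$, and the top order is handled through the macroscopic interaction functional $\mathfrak{E}_N^{int}$ in \eqref{x-u-close}), so there is no endpoint giving $N+\frac12$ to interpolate against. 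More importantly, you never address the term that forces this whole structure: when $\partial^\alpha_\beta$ hits the transport term, the commutator $\sum_{|\beta_1|=1}\partial_{\beta_1}\xi\cdot\nabla_x\partial^\alpha_{\beta-\beta_1}u_2$ produces $\|w_{|\beta-e_i|-\ell}\partial^{\alpha+e_i}_{\beta-e_i}u_2\|^2$ — one more spatial and one fewer velocity derivative — which for soft potentials cannot be absorbed by the degenerate $\nu$-dissipation. The paper's resolution is to \emph{assign} the rates \eqref{decay_rates} so that the time--velocity interpolation \eqref{key} (Lemma \ref{in.}, valid only because $0<\vartheta\le\frac19$ and the rate increments across $U^{\text{high}}$ are $\frac23$ rather than $1$; see Remark \ref{DECAY-enough}) lets these terms be traded against the extra dissipation $(1+t)^{-1-\vartheta}\|\langle\xi\rangle w\cdot\|^2$ and the $\nu$-dissipation at the next spatial order, and then to close by an induction on $|\beta|$ with the chained linear combinations \eqref{h.-decay2}, \eqref{NN}, \eqref{x-u-close}. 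Your proposal contains no substitute mechanism for this, so the weighted estimates at total order $N$ (and $N-1$) do not close as described.

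A second, smaller omission: even the ``low-tier'' Duhamel step is not self-contained at order $N-1$, because the source term $\nabla_x\phi\cdot\nabla_\xi u_2$ yields $\|\langle\xi\rangle^{-\frac{\gamma}{2}l_2}\nabla_x^{N-1}\nabla_\xi u\|$, which decays too slowly. The paper must split $G=G_1+G_2$, move one factor $|k|^2$ in frequency onto the semigroup kernel, and invoke the almost-optimal estimate \eqref{A-u2-wei2.} for $\|\langle\xi\rangle^{-\frac{\gamma}{2}l_2}\nabla_x^{N-2}\nabla_\xi u_2\|^2$ — this is precisely where the constraint $\ell\ge\frac{4p-2}{4p-3}l_2+1$ and the $(1-p)$-corrected entries of $\CE_\infty$ are used. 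Your text records these hypotheses but not the argument they exist for, so as written the optimal rate of Lemma \ref{non-weight} at $m=N-1$, on which everything else leans, is not established.
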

\begin{remark}
  We now give several remarks concerning Theorem \ref{Thm}:
  \begin{itemize}
  \item Theorem 1.1 covers the case of all cutoff soft potentials without the neutral condition \eqref{neutral}, such a result together with the result obtained in \cite{Xiao-Xiong-Zhao-Science China-2014} provide a satisfactory global well-posedness theory on the global solvability of the Cauchy problem of the one-species VPB system \eqref{u} near a given global Maxwellian for the whole range of cutoff intermolecular interactions in the perturbation framework.
  \item The argument used in this paper applies also to the Cauchy problem of the two-species VPB system \eqref{2-VPB}-\eqref{2-P} for the whole range of cutoff intermolecular interactions and moreover, based on the observation of Y.-J. Wang \cite{Wang-JDE-2013} for the two-species VPB system \eqref{2-VPB}-\eqref{2-P} for the hard sphere model, one can hope that the electrostatic field $\nabla_x\phi(t,x)$ together with the solutions $u_\pm(t,x,\xi)={\bf M}^{-1/2}(\xi)\left(f_\pm(t,x,\xi)-{\bf M}(\xi)\right)$ can have better decay estimates.
    \item It is worth to pointing out that, the analysis in \cite{Duan-Yamg-Zhao-JDE-2012, Duan-Yang-Zhao-M3AS-2013, Xiao-Xiong-Zhao-JDE-2013} is based on the weight function $\widetilde{w}_{\ell-|\beta|}(t,\xi)$ given by \eqref{weight-DYZ}, while in this paper we use the weight function $w_{\ell-|\beta|}(t,\xi)$ $(|\beta|\leq \ell)$ defined in \eqref{weight}, it is easy to see that the smallness conditions we imposed on the initial perturbation $u_0(x,\xi)$ in this paper are weaker than those imposed in \cite{Duan-Yamg-Zhao-JDE-2012, Duan-Yang-Zhao-M3AS-2013, Xiao-Xiong-Zhao-JDE-2013}, while from the estimate \eqref{decay} together with \eqref{Energy} and \eqref{decay_rates}, we can deduce that
  \begin{equation}\label{lower_decay_rates}
   \left\|w_{|\beta|-\ell}\partial^{\alpha}_{\beta}u\right\|^2
     +\left\|\nabla_x^{|\alpha|+1}\phi\right\|^2\lesssim (1+t)^{-|\alpha|-\frac 12},\quad (\alpha,\beta)\in  U_{\alpha,\beta}^{\text{low}}.
  \end{equation}
        These temporal decay estimates \eqref{lower_decay_rates} are optimal in the sense that they coincide with those rates given in Lemma \eqref{lem.-decay} at the level of linearization. Such a result improves the almost optimal temporal decay rates for $L^2-$norm of certain higher order derivatives of the solution $u(t,x,\xi)$ to the one-species VPB system \eqref{u} with respect to $x-$variables obtained in \cite{Xiao-Xiong-Zhao-JDE-2013} to optimal.
    \item In our main result, we do not ask the initial perturbation $u_0(x,\xi)$ to satisfy the neutral condition \eqref{neutral}, even for the the case when such a condition is assumed to hold further, the arguments used in this manuscript can be adapted to yield an improved result. In fact, one can get a similar result by replacing the temporal decay rates $r_{|\alpha|}$ in \eqref{Energy} by $\tilde{r}_{|\alpha|}=r_{|\alpha|}+1$ and such a result improves the temporal decay result obtained in \cite{Duan-Yang-Zhao-M3AS-2013} for the case of moderately soft potentials.
    \item Our analysis also shows that, by a time-velocity splitting method as in \cite{Duan-Liu-CMP-2013} and \cite{Strain-KRM-2012}, for the microscopic part $u_2$, one can deduce the following improved temporal decay estimates
  \begin{equation}\label{micro_decay}
  \left\|\partial^\alpha u_2(t)\right\|^2\lesssim(1+t)^{-\left(|\alpha|+\frac 32\right)+(1-p)},\quad \left\|\nabla^{N-2}_x\nabla_{\xi}u_2(t)\right\|^2\lesssim (1+t)^{-\left(N-2+\frac{3}{2}\right)+2(1-p)}
  \end{equation}
        hold for any $\frac{3}{4}<p<1$ and $|\alpha|\leq N-2$. The temporal decay estimates \eqref{micro_decay} is almost optimal if we choose $1-p$ sufficiently small, see \eqref{A-u2-opt1.} and \eqref{A-u2-opt2.} for details.
  \item This paper is concerned with the one species VPB system \eqref{VPB}-\eqref{P} for cutoff intermolecular interactions. For the non-cutoff case, by combining the argument employed in \cite{Duan-Liu-CMP-2013} to treat the two-species VPB system \eqref{2-VPB}-\eqref{2-P} for non-cutoff intermolecular interactions under neutral condition imposed on the initial perturbation with the method used in this paper, a similar global solvability result can be obtained for the one-species VPB system \eqref{VPB}-\eqref{P} even without the neutral condition.
\end{itemize}
\end{remark}
Now we outline the main ideas used in this manuscript to deduce our main result. Our analysis is also based on an elaborated weighted energy method and our main observations are as follows: First of all, similar to that of \cite{Duan-Yamg-Zhao-JDE-2012, Duan-Yang-Zhao-M3AS-2013, Duan-Yang-Zhao-VPL}, our introduction of the new time-velocity weight function $w_{\ell-|\beta|}(t,\xi)$ $(|\beta|\leq \ell)$ defined by \eqref{weight}, especially the exponential factor
$w^e_{\ell}(t,\xi)=e^{\frac{q\langle\xi\rangle^2}{(1+t)^{\vartheta}}}$ will induce an extra dissipative term like
\begin{equation}\label{extra_dissipative}
(1+t)^{-1-\vartheta}\left\|\langle\xi\rangle w_{\ell-|\beta|}\partial^\alpha_\beta u_2\right\|^2,
\end{equation}
while with the algebraic factor $w^a_{\ell-|\beta|}(t,\xi)=\langle\xi\rangle^{\frac{\gamma}{2}(|\beta|-\ell)}$ $(|\beta|\leq \ell)$ of the new weight function we introduced, the problem encountered in \cite{Duan-Yang-Zhao-M3AS-2013} on the nonlinear term $\nabla_x\phi\cdot\nabla_{\xi}u_2$ when the weight function is chosen as $\widetilde{w}_{\ell-|\beta|}(t,\xi)$ given by  \eqref{weight-DYZ} and introduced in \cite{Duan-Yamg-Zhao-JDE-2012, Duan-Yang-Zhao-M3AS-2013}, which leads to the restriction of $\gamma$ to $-2\leq \gamma<0$, is no longer a problem. In fact, to obtain the desired weighted estimates on the terms involving the mixed spatial and velocity derivatives of such a nonlinear term, one should put an extra negative-power function $\langle\xi\rangle^{\gamma/2}$ in front of $\nabla_{\xi}u_2$ and consequently only the velocity growth $\langle\xi\rangle^{-\gamma/2}$ comes up to have a balance which can be bounded by the second-order moment $\langle\xi\rangle^{2}$ for the whole range of $\gamma>-3$. However, since the new weight function $w_{\ell-|\beta|}(t,\xi)$ we introduced in \eqref{weight} satisfies
$$
\left|w_{|\beta|-\ell}(t,\xi)\right|^2=w_{|\beta|-\ell}(t,\xi)\langle\xi\rangle^{\frac{\gamma}{2}} w_{|\beta-e_i|-\ell}(t,\xi),
$$
another trouble arises when one deals with term involving the mixed spatial and velocity derivatives of the linear transport term $\xi\cdot \nabla_xu_2$, which now can only be controlled as follows
\begin{equation*}
\begin{split}
   &\sum_{|\beta_1|=1}\left(\partial_{\beta_1}\xi\cdot\partial^{\alpha}_{\beta-\beta_1}\nabla_xu_2,
   w_{|\beta|-\ell}^2\partial^{\alpha}_{\beta}u_2\right)\\
   \lesssim&\eta\left\|w_{|\beta|-\ell}\partial^{\alpha}_{\beta}u_2\right\|_{\nu}^2
   +C_{\eta}\left\|w_{|\beta-e_i|-\ell}\partial^{\alpha+e_i}_{\beta-e_i}u_2\right\|^2.
 \end{split}
\end{equation*}
In fact, although the first term in the right hand side of the above inequality can be absorbed by the coercive estimate \eqref{coercive} of the linearized Boltzmann collision operator ${\bf L}$, since for soft potentials, the collision frequency $\nu(\xi)\sim(1+|\xi|)^\gamma$ which is degenerate for sufficiently large $|\xi|$, we can not expect the second term in the right hand side of the above inequality to be controlled by the corresponding weaker dissipative term $\|w_{|\beta-\beta_1|-\ell}\partial^{\alpha+e_i}_{\beta-e_i}u_2\|_\nu^2$ induced by the dissipation of the linearized operator ${\bf L}$.

The key point to overcome such a difficulty is based on an observation that, as the order of the $x-$derivatives of the solutions of the one-species VPB system \eqref{u} increases, the corresponding temporal decay rate also increases. So it is hopeful to control those difficult terms by taking advantage of such a fact, the linear dissipation term $\|w_{|\beta|-\ell}\partial^{\alpha}_\beta u_2\|_{\nu}^2$ induced by the linearized Boltzmann collision operator ${\bf L}$, and the extra dissipative term  $(1+t)^{-1-\theta}\|\langle\xi\rangle w_{|\beta|-\ell}\partial^{\alpha}_\beta u_2\|^2$ defined by \eqref{extra_dissipative} which is due to our introduction of the exponential factor in the new time-velocity weight function \eqref{weight}. In fact, by employing the interpolation technique, see Lemma \ref{in.} for details, one has the following estimate
\begin{equation*}
 \begin{split}
   &(1+t)^{r_{|\alpha|}+\vartheta}\left\|w_{|\beta-e_i|-\ell}\partial^{\alpha+e_i}_{\beta-e_i}u_2(t)\right\|^2\\
   \lesssim& (1+t)^{r_{|\alpha+e_i|}-1}\left\|\langle\xi\rangle w_{|\beta-e_i|-\ell}
   \partial^{\alpha+e_i}_{\beta-e_i}u_2\right\|^2+(1+t)^{r_{|\alpha+e_i|}+\vartheta}
   \left\|w_{|\beta-e_i|-\ell}\partial^{\alpha+e_i}_{\beta-e_i}u_2\right\|_{\nu}^2,
 \end{split}
\end{equation*}
which can be controlled by the mathematical principle of induction according to the order of velocity derivatives since such trouble terms vanish when $|\beta|=0$. A key point here in our analysis is to assign the special temporal decay rates to certain $L^2-$norm of $\partial^\alpha_\beta u$ as in \eqref{decay_rates} for each pair of multiindex $(\alpha,\beta)$ satisfying $|\alpha|+|\beta|\leq N$, especially for the case when $(\alpha,\beta)\in U_{\alpha,\beta}^{\text{high}}$. The very reason, and in fact it is the only reason, to do so is to guarantee that the above estimate, cf. also the estimate \eqref{key}, holds for each pair of multiindex $(\alpha,\beta)$ satisfying $|\alpha|+|\beta|\leq N$ so that the linear combinations performed in \eqref{h.-decay2} and \eqref{NN} work well. The reason to cause such a difficulty is due to the fact that the temporal decay rate we can obtain in Lemma \ref{high-oder-estimates} on $\left\|w_\ell\nabla_x^N u\right\|$ is just $N-\frac 12$ but not $N+\frac 12$.

Finally, to close our analysis, Lemma \ref{non-weight} plays a significant role, but it is not so easy to deduce this lemma since our special designed decay rate for the highest order. Precisely speaking, when we deal with the estimate \eqref{trouble}, we will meet the term $\left\|\langle\xi\rangle^{-\frac{\gamma}{2}l_2}\nabla^{N-1}_x\nabla_{\xi}u\right\|^2$ with decay $(1+t)^{-\left(N-\frac{1}{2}-\frac{2}{3}\right)}$ which is much slower than what we wanted. To overcome this difficult, on one hand we rewrite such a term as
\begin{equation*}
  \nabla^{N-1}_x\nabla_{\xi}u=\nabla^{N-1}_x\nabla_{\xi}{\bf P}u+\nabla^{N-1}_x\nabla_{\xi}{\{\bf I-P\}}u
\end{equation*}
and for the first part, noticing that $\left\|\langle\xi\rangle^{-\frac{\gamma}{2}l_2}\nabla^{N-1}_x\nabla_{\xi}{\bf P}u\right\|^2\sim\left\|\nabla^{N-1}_x{\bf P}u\right\|^2$ which decays exactly like $(1+t)^{-\left(N-\frac{1}{2}\right)}$, while for the microscopic part, by splitting the Fourier frequency as $|k|^{2(N-1)}=|k|^2|k|^{2(N-2)}$ and by repeating the argument used in \cite{Xiao-Xiong-Zhao-JDE-2013}, we can also derive our desired estimate since, from the estimate \eqref{A-u2-wei2.}, the term $\left\|\langle\xi\rangle^{-\frac{\gamma}{2}l_2}\nabla^{N-2}_x\nabla_{\xi}u_2(t)\right\|^2$ can enjoy a nice temporal decay estimate. 

Before concluding this section, it is worth to pointing out that besides the construction of classical solutions near Maxwellians to the VPB system \eqref{VPB}-\eqref{P} and/or \eqref{2-VPB}-\eqref{2-P} in the perturbation framework, the global existence of renormalized solutions with large initial data to the VPB system \eqref{VPB}-\eqref{P} was proved in \cite{Lions-1991} and this result was later generalized to the case with boundary in \cite{Mischler-CMP-2000}. The time asymptotic behavior of the renormalized solutions with extra regularity assumptions was studied in \cite{Desvillettes-Dolbeault-CPDE-1991, Li-JDE-2008}. The decay property of the solutions to the linearized VPB
system around Maxwellians was studied in \cite{Glassey-Strauss-TTSP-1999, Glassey-Strauss-DCDS-1999, Li-Yang-Zhong}. Finally, for the perturbation around vacuum, the results in \cite{Duan-Yang-Zhu--DCDS2006, Duan-Zhang-Zhu-M3AS-2006, Guo-CMP-2001} give the global existence of smooth small-amplitude solutions for the cut-off intermolecular interactions for $\gamma\in(-2,1]$.

The rest of this paper is organized as follows. Section 2 is concerned with some weighted estimates on the linearized collision operator $\bf L$ and the nonlinear collision terms, Section 3 and Section 4 are devoted to deducing certain lower order energy type estimates and higher order energy type estimates, respectively, and Section 5 is concentrated on the proof of Theorem \ref{Thm}. Finally the proof of the almost optimal decay rate of microscopic component will be given in Appendix A.\\

\noindent{\bf{Notations}.}\quad
Throughout this paper, $C$ denotes some generic positive constant (generally large) while $\kappa$ or $\lambda$ is used to denote some generic positive constant (generally small). Note that $C$, $\kappa$, and $\lambda$ may take different values in different places. $A\lesssim B$ means that there is a generic constant $C>0$ such that $A \leq CB$. $A\sim B$ means $A\lesssim B$ and $B\lesssim A$. The multi-indices $\alpha= [\alpha_1,\alpha_2, \alpha_3]$ and $\beta = [\beta_1, \beta_2, \beta_3]$ will be used to record spatial and velocity derivatives, respectively. And $\partial^{\alpha}_{\beta}=\partial^{\alpha_1}_{x_1}\partial^{\alpha_2}_{x_2}\partial^{\alpha_3}_{x_3}
\partial^{\beta_1}_{\xi_1}\partial^{\beta_2}_{\xi_2}\partial^{\beta_3}_{\xi_3}$. Similarly, the notation $\nabla_x^k=\partial^{\alpha}$ will be used when $\beta=0,~|\alpha|=k$.  The length
of $\alpha$ is denoted by $|\alpha|=\alpha_1+\alpha_2+\alpha_3$. And $\alpha'\leq  \alpha$ means that no component of $\alpha'$ is greater than the corresponding component of $\alpha$, and $\alpha'<\alpha$ means that $\alpha'\leq  \alpha$ and $|\alpha'|<|\alpha|$. We use $\langle\cdot,\cdot\rangle$ to denotes the ${L^2_{\xi}}$ inner product in $\R^3_{\xi}$ with the ${L^2}(\R^3_{\xi  })$ norm  $|\cdot|_{L^2}$. And it is convenient to define a weighted inner product as $\langle g_1,g_2\rangle_{\nu}=\langle g_1,\nu g_2\rangle$ in $\R^3_\xi$, with its corresponding Hilbert space denoted by $L^2_{\nu}(\R^3)$. For notational simplicity, $(\cdot, \cdot)$ denotes the ${L^2}$ inner product either in $\R^3_{x}\times\R^3_{\xi }$ or in $\R^3_{x}$ with the ${L^2}(\R^3_{x}\times\R^3_{\xi })$ or the ${L^2}(\R^3_{x})$  norm $\|\cdot\|$. A similar weighted inner product is defined as $(g_1,g_2)_{\nu}=(g_1,\nu g_2)$ with corresponding norm $\|g\|_{\nu}=(g,\nu g)$. For $q\geq 1$, $Z_q$ denotes the space $Z_q=L^2_{\xi }(\R^3_{\xi };L^q(\R^3_{x}))$ with the norm $\|u\|_{Z_q}=\left|\|u\|_{L^q_{x}}\right|^2_{L^2_{\xi}}$.

\section{Preliminaries}
This section is concerned with some basic velocity weighted estimates on the linearized collision operator $\bf L$ and the nonlinear collision term $\Gamma(u,u)$. For this purpose, we first list the velocity weighted estimates on the linearized collision operator $\bf L$ and the integral operator $K$ with respect to the velocity function $w_{\ell}(t,\xi)$ defined in \eqref{weight} whose proofs can be found in \cite{Strain-Guo-ARMA-2008}
\begin{lemma}\label{L-K} [cf. \cite{Strain-Guo-ARMA-2008}] Let $-3<\gamma<0,~\ell\in \R,$ and $0<q\ll1$. If $|\beta|>0$, then for any $\eta>0$, there is $C_{\eta}>0$ such that
\begin{equation}\label{L}
\begin{split}
\left\langle w^2_{|\beta|-\ell,q}\partial_{\beta}[{\bf L}g],\partial_{\beta}g\right\rangle
\geq\left|w_{|\beta|-\ell}\partial_{\beta} g\right|^2_{L^2_{\nu}}
-\eta\sum_{|\beta_1|\leq|\beta|}\left|w_{|\beta_1|-\ell}\partial_{\beta_1}g\right|^2_{L^2_{\nu}}
-C_{\eta}\left|\chi_{|\xi|<2C_{\eta}}\langle\xi\rangle^{-\gamma \ell}g\right|^2_{L^2}.
\end{split}
\end{equation}
If $|\beta|=0$, then for any $\eta>0$, there is $C_{\eta}>0$ such that
\begin{equation*}
\begin{split}
\left\langle w^2_{-\ell,q}Kg_1,g_2\right\rangle
\leq \left(\eta\left|w_{-\ell} g_1\right|_{L^2_{\nu}}
+C_{\eta}\left|\chi_{|\xi|<2C_{\eta}}\langle\xi\rangle^{-\gamma l} g_1\right|_{L^2}\right)
\left|w_{-\ell} g_2\right|_{L^2_{\nu}}.
\end{split}
\end{equation*}
Here $\chi_{|\xi|<2C_{\eta}}=\chi_{|\xi|<2C_{\eta}}(\xi)$ denotes the characteristic function of the set $\{\xi\in \R^3: |\xi|<2C_{\eta}\}$.
\end{lemma}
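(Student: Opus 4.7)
The plan is to exploit the decomposition $\mathbf{L} = -\nu(\xi) + K$ recorded earlier in the introduction, and to analyze the contributions of $-\nu g$ and $Kg$ separately against the weight $w^2_{|\beta|-\ell,q}$. The main observation underlying both parts is that the exponential factor $e^{q\langle\xi\rangle^2/(1+t)^\vartheta}$ can be harmlessly absorbed into the Gaussian decay of Grad's kernel $K(\xi,\xi_*)$ whenever $q>0$ is chosen sufficiently small, independently of time; hence the new time-velocity weight behaves, for the purposes of estimating $\mathbf{L}$, essentially like the purely algebraic weight $\langle\xi\rangle^{\gamma\ell/2}$.

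For the part coming from $-\nu$ in the case $|\beta|>0$, I would apply Leibniz's rule to $\partial_\beta(\nu g)$. The principal contribution $-\nu\,\partial_\beta g$ paired with $w^2_{|\beta|-\ell,q}\partial_\beta g$ produces exactly $|w_{|\beta|-\ell}\partial_\beta g|^2_{L^2_\nu}$, which is the main positive term on the right-hand side of \eqref{L}. The remaining Leibniz terms involve $\partial_{\beta_1}\nu$ with $|\beta_1|\geq 1$, and the bound $|\partial_{\beta_1}\nu(\xi)|\lesssim \langle\xi\rangle^{\gamma-|\beta_1|}$ shows that these terms are subordinate; by Cauchy--Schwarz with a small parameter $\eta$, they are absorbed into $\eta\sum_{|\beta_1|\leq|\beta|}|w_{|\beta_1|-\ell}\partial_{\beta_1}g|^2_{L^2_\nu}$ on the left, plus a compact-support remainder controlled by $C_\eta|\chi_{|\xi|\leq 2C_\eta}\langle\xi\rangle^{-\gamma\ell}g|^2_{L^2}$ after splitting the integration region into $\{|\xi|\leq 2C_\eta\}$ and its complement.

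For the $K$-part, which is the harder piece, I would use Grad's kernel representation $Kg(\xi)=\int_{\R^3}K(\xi,\xi_*)g(\xi_*)d\xi_*$ together with the standard pointwise bound
\begin{equation*}
|K(\xi,\xi_*)|\lesssim \bigl(|\xi-\xi_*|^\gamma+|\xi-\xi_*|^{-1}\bigr)\exp\!\left(-c|\xi-\xi_*|^2-c\tfrac{(|\xi|^2-|\xi_*|^2)^2}{|\xi-\xi_*|^2}\right).
\end{equation*}
Taking $\partial_\beta$ on $Kg$ produces an analogous operator whose kernel differs by polynomial factors in $\xi,\xi_*$ absorbed into the Gaussian weight. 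Transferring a half-weight $w_{|\beta|-\ell,q}(t,\xi)w_{|\beta|-\ell,q}(t,\xi_*)^{-1}$ onto the kernel and invoking the elementary inequality $\langle\xi\rangle^2-\langle\xi_*\rangle^2\leq 2\langle\xi-\xi_*\rangle(\langle\xi\rangle+\langle\xi_*\rangle)$, one sees that for $0<q\ll 1$ this transfer costs only a factor $\exp(c'|\xi-\xi_*|^2/2)$ which is still dominated by the original Gaussian. The resulting weighted kernel is of the same Grad type and can be split, as in \cite{Strain-Guo-ARMA-2008}, into a part with small support in $|\xi-\xi_*|$ (or large $|\xi|$) controlled by the collision frequency with a small coefficient $\eta$, plus a smoothing piece compactly supported in $\xi$ controlled by the $C_\eta$-term. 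This yields both the $|\beta|=0$ upper bound and the $K$-contribution in the $|\beta|>0$ estimate.

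The main obstacle is the last step: verifying that the time-velocity exponential $e^{q\langle\xi\rangle^2/(1+t)^\vartheta}$ really does not destroy the Gaussian kernel estimates when transferred between $\xi$ and $\xi_*$. This reduces to choosing $q$ small enough, uniformly in $t\geq 0$, so that $q\langle\xi\rangle^2-q\langle\xi_*\rangle^2$ is absorbed by a fraction of $c(|\xi-\xi_*|^2+(|\xi|^2-|\xi_*|^2)^2/|\xi-\xi_*|^2)$. Once this is in place, the remainder of the argument is an almost mechanical reduction to the Strain--Guo weighted estimates, with the algebraic factor $\langle\xi\rangle^{\gamma\ell/2}$ playing exactly the same role as the weight $\langle\xi\rangle^{-(\gamma+1)\ell}$ appearing there.
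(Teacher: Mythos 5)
Your outline is essentially the paper's own treatment: the paper offers no independent proof of this lemma but simply defers to \cite{Strain-Guo-ARMA-2008}, and your argument (splitting ${\bf L}=-\nu+K$, Leibniz on the $\nu$-part, Grad's kernel bound with the weight ratio $w(\xi)/w(\xi_*)$ absorbed into the Gaussian for $0<q\ll1$ uniformly in $t$, then the standard small/compact splitting) is precisely the Strain--Guo argument adapted to the weight \eqref{weight}. So the proposal is correct and takes essentially the same route as the paper.
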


Now we turn to deduce corresponding weighted estimates on the terms related to the nonlinear collision term $\Gamma(u,u)$ which are fundamental in our analysis. Before stating our results, we recall that
\begin{equation*}
\partial^{\alpha}_{\beta}\Gamma\left(g_1,g_2\right)=\sum C^{\beta_0,\beta_1,\beta_2}_{\beta}C^{\alpha_1,\alpha_2}_{\alpha}
\Gamma^0\left(\partial^{\alpha_1}_{\beta_1}g_1,\partial^{\alpha_2}_{\beta_2}g_2\right),
\end{equation*}
where the summation is over $\beta_0+\beta_1+\beta_2=\beta$ and $\alpha_1+\alpha_2=\alpha$, and $\Gamma^0$ is given as follows
\begin{equation*}
\begin{split}
\Gamma^0\left(\partial^{\alpha_1}_{\beta_1}g_1,\partial^{\alpha_2}_{\beta_2}g_2\right)
=&\iint_{\R^3\times\mathbb{S}^2}|\xi-\xi_{*}|^{\gamma}q_0(\vartheta)
\partial_{\beta_0}\left[{\bf M}^{\frac{1}{2}}(\xi_{*})\right]
\partial^{\alpha_1}_{\beta_1}g_1(\xi'_{*})\partial^{\alpha_2}_{\beta_2}g_2(\xi')d\omega d\xi_{*}\\
&-\partial^{\alpha_2}_{\beta_2}g_2(\xi)\iint_{\R^3\times\mathbb{S}^2}
|\xi-\xi_{*}|^{\gamma}q_0(\vartheta)\partial _{\beta_0}\left[{\bf M}^{\frac{1}{2}}(\xi_{*})\right]\partial^{\alpha_1}_{\beta_1}g_1(\xi_{*})d \omega d\xi_{*}\\
=&\Gamma^0_{gain}-\Gamma^0_{loss}.
\end{split}
\end{equation*}
Moreover, we need the following result whose special version has been proved in \cite{Xiao-Xiong-Zhao-JDE-2013}:
\begin{lemma}\label{JDE-lem.}
Set $\nu=\nu(\xi)=\langle\xi\rangle^{\frac{\gamma}{2}}$ with $-3<\gamma<0$. For any $\ell\geq0$, it holds that
\begin{equation*}
 \begin{split}
   \left|\nu^{-\frac{l}{2}}w_{-\ell}{ \Gamma}(g_1,g_2)\right|^2_{L^2}
   \lesssim\displaystyle\sum_{|\beta|\leq2}\left|w_{|\beta|-\ell}\partial_{\beta}g_1\right|^2
   \left|w_{-\ell}g_2\right|^2,\\
   \left|\nu^{-\frac{1}{2}}w_{-\ell}{ \Gamma}(g_1,g_2)\right|^2_{L^2}
   \lesssim\displaystyle\sum_{|\beta|\leq2}\left|w_{|\beta|-\ell}\partial_{\beta}g_2\right|^2
   \left|w_{-\ell}g_1\right|^2.
 \end{split}
\end{equation*}
\end{lemma}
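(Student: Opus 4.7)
The plan is to exploit the natural splitting $\Gamma = \Gamma^0_{gain}-\Gamma^0_{loss}$ introduced just above the lemma, together with the symmetry between the two claimed bounds: once the loss and gain parts are controlled with one of $g_1,g_2$ in $L^\infty_\xi$ and the other in $L^2_\xi$, the Sobolev embedding $H^2_\xi(\R^3)\hookrightarrow L^\infty_\xi(\R^3)$ converts the $L^\infty$-norm into a sum over at most two velocity derivatives. The two inequalities in the lemma then correspond to which of $g_1,g_2$ is placed in $L^\infty_\xi$, so it suffices to prove one of them and recover the other by interchanging arguments.

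For the loss term, after integrating the cutoff factor $q_0(\vartheta)$ on $\mathbb{S}^2$, I would factor $g_1(\xi_*)= w_{-\ell}^{-1}(\xi_*)\cdot [w_{-\ell}g_1](\xi_*)$, pull out $\|w_{-\ell}g_1\|_{L^\infty_\xi}$, and exploit that for $0<q\ll 1$ the exponential part of $w_{-\ell}^{-1}$ is absorbed by ${\bf M}^{1/2}(\xi_*)$. The classical estimate $\int|\xi-\xi_*|^\gamma {\bf M}^{1/4}(\xi_*)\,d\xi_* \lesssim \langle\xi\rangle^\gamma$ (valid for every $\gamma>-3$) then yields $|\Gamma^0_{loss}(\xi)|\lesssim \langle\xi\rangle^\gamma|g_2(\xi)|\,\|w_{-\ell}g_1\|_{L^\infty_\xi}$, and after multiplication by $\nu^{-1/2}(\xi)w_{-\ell}(\xi)$ the inequality $\nu^{-1/2}\langle\xi\rangle^\gamma\lesssim 1$ reduces everything to $|w_{-\ell}g_2|_{L^2}^2$.

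For the gain part, I would apply Cauchy--Schwarz on $\mathbb{S}^2\times\R^3_{\xi_*}$ to extract $\|w_{-\ell}g_1\|_{L^\infty_\xi}^2$ from $g_1(\xi_*')$, reducing the remaining kernel bound to $\int |\xi-\xi_*|^\gamma q_0\,{\bf M}^{1/2}(\xi_*) w_{-\ell}^{-2}(\xi_*')\,d\omega\, d\xi_*\lesssim \langle\xi\rangle^\gamma$, where the energy identity $|\xi'|^2+|\xi_*'|^2=|\xi|^2+|\xi_*|^2$ controls $w_{-\ell}(\xi)/w_{-\ell}(\xi_*')$ and the residual exponential factor is absorbed into ${\bf M}^{1/2}(\xi_*)$. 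The pre-/post-collisional change of variables as in \cite{Strain-Guo-ARMA-2008} then handles the remaining $\iiint |\xi-\xi_*|^\gamma q_0\,{\bf M}^{1/2}(\xi_*)|g_2(\xi')|^2\,d\omega\, d\xi_*\,d\xi\lesssim |w_{-\ell}g_2|_{L^2}^2$. Finally, a Leibniz expansion of $\partial_\beta(w_{-\ell}g_1)$ combined with the pointwise bound $|\partial^{\beta-\beta_1}w_{-\ell}(\xi)|\lesssim\langle\xi\rangle^{|\beta-\beta_1|}w_{-\ell}(\xi)$ and polynomial-into-weight absorption converts the Sobolev bound into the stated sum $\sum_{|\beta|\leq 2}|w_{|\beta|-\ell}\partial_\beta g_1|^2$; the first inequality follows by the symmetric argument.

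The main technical obstacle is distributing the time-velocity weight $w_{-\ell}(t,\xi)$ across the four velocities $\xi,\xi_*,\xi',\xi'_*$ in the gain term. The exponential factor $e^{q\langle\xi\rangle^2/(1+t)^\vartheta}$ has to be moved across the collision and absorbed into a fixed fractional power of the Maxwellian in such a way that the implicit constant remains independent of $t$, and this is exactly where the smallness of $q$ enters.
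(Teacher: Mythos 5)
Your toolbox (gain/loss splitting, Cauchy--Schwarz, Grad-type kernel bounds, the pre-/post-collisional change of variables, Sobolev in $\xi$) is the right one, but the step that actually produces the stated right-hand side is false. You extract $\|w_{-\ell}g_i\|_{L^\infty_\xi}$ --- the sup of the function carrying the \emph{full} weight, including the exponential factor $e^{q\langle\xi\rangle^2/(1+t)^\vartheta}$ --- and then claim that Sobolev embedding, the Leibniz rule and ``polynomial-into-weight absorption'' yield
\begin{equation*}
\|w_{-\ell}g_i\|_{L^\infty_\xi}\lesssim\sum_{|\beta|\leq2}\left|w_{|\beta|-\ell}\partial_\beta g_i\right|_{L^2_\xi}.
\end{equation*}
This cannot work: the weights in the lemma \emph{degrade} with each derivative, $w_{|\beta_1|-\ell}=\langle\xi\rangle^{\gamma|\beta_1|/2}w_{-\ell}\leq w_{-\ell}$, whereas each derivative falling on the exponential part of $w_{-\ell}$ costs a factor $\langle\xi\rangle$; the Leibniz terms $\langle\xi\rangle^{|\beta-\beta_1|}w_{-\ell}\partial_{\beta_1}g_i$ are therefore not dominated by $w_{|\beta_1|-\ell}\partial_{\beta_1}g_i$, and $w_{|\beta|-\ell}$ contains no Gaussian factor that could absorb the polynomials. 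Indeed the displayed embedding is simply false: taking $g=w_{-\ell}^{-1}\phi((\xi-\xi_0)/\delta)$ with $|\xi_0|=R$, $\delta=R^{-m}$ and $1\leq m<-2\gamma$, the right-hand side tends to $0$ as $R\to\infty$ while the left-hand side stays of order one; this already kills the argument for $\gamma<-\tfrac12$, hence in particular in the very soft range that is the whole point of the paper.

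The correct mechanism (and the one behind the paper's one-line proof, which just invokes Lemma 2.4 of \cite{Xiao-Xiong-Zhao-JDE-2013} with the algebraic weight replaced by the present one) is to apply the $H^2_\xi\hookrightarrow L^\infty_\xi$ embedding only to \emph{Maxwellian-damped} quantities such as ${\bf M}^{\delta}g_i$: derivatives of ${\bf M}^{\delta}$ give polynomials times ${\bf M}^{\delta}$, and ${\bf M}^{\delta'}\lesssim w_{|\beta|-\ell}$ for $0<q\ll1$, which is exactly what produces the degraded weights on the right-hand side. Concretely, in the loss term one keeps a fraction of ${\bf M}^{1/2}(\xi_*)$ attached to the factor receiving the sup (transferring Maxwellian decay between $\xi$ and $\xi_*$ on the near-diagonal set $|\xi-\xi_*|\leq1$), and in the gain term one must note a second error in your bookkeeping: after $w_{-\ell}(\xi)\lesssim w_{-\ell}(\xi')w_{-\ell}(\xi_*')$ the leftover exponential factor sits at a \emph{post}-collisional velocity and cannot be ``absorbed into ${\bf M}^{1/2}(\xi_*)$'', which decays in the pre-collisional variable (take $\xi$ large and $\xi_*$ small: then $\xi_*'$ is large while ${\bf M}^{1/2}(\xi_*)\sim1$). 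The weight attached to $\xi'$ has to travel with $g_2(\xi')$ through the change of variables, and because the map $\xi_*\mapsto\xi'$ is degenerate one needs the Grad-type splitting $|\xi_*|\gtrless|\xi|/2$ (or retained Maxwellian decay in $\xi$) to make the resulting integrals converge --- your unweighted triple-integral bound skips precisely this. Finally, the two inequalities are not literally symmetric, since $\Gamma$ is not symmetric in its arguments (the first argument is the one paired with ${\bf M}^{1/2}(\xi_*)$), so each configuration must be checked separately for where the Maxwellian decay comes from.
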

\begin{proof}
  The proof of this lemma is similar to that of Lemma 2.4 in \cite{Xiao-Xiong-Zhao-JDE-2013}, the only thing we need to pay attention to is that we need to change the algebraic weight $\nu^{-2l}$ for any $l\geq 0$ used in \cite{Xiao-Xiong-Zhao-JDE-2013} to the exponential weight $\nu^{-\frac{1}{2}}w_{-\ell}(t,\xi)$ formally here. Since the modification is straightforward, we omit the details for brevity.
\end{proof}
With the above lemma in hand, we now deal with the corresponding weighted estimates on those terms related to the nonlinear collision term in terms of the temporal energy functional $\mathcal{E}_\infty(t)$ defined by \eqref{Energy}:
\begin{lemma}\label{Gamma}
Let $|\alpha|+|\beta|=k\leq N$, $l_2\geq 0$, $N\geq4$, and $\ell\geq\max\left\{N, l_2+1,l_2-\frac 2\gamma\right\}$. It holds that
  \begin{equation}\label{estimate-on-Gamma}
   \begin{split}
    \left\|\nu^{-\frac{1}{2}}w_{|\beta|-\ell}\partial^{\alpha}_{\beta}\Gamma(u,u)\right\|^2
    \lesssim&
    \begin{cases}
      (1+t)^{-\left(|\alpha|+\frac{5}{2}-\frac 16\right)}\CE^2_{\infty}(t),~\text{if}~k\leq N-2,\\
      (1+t)^{-\left(|\alpha|+\frac{5}{2}-\frac{5}{6}\right)}\CE^2_{\infty}(t),~\text{if}~k=N-1,\\
      (1+t)^{-\left(|\alpha|+\frac{3}{2}-\frac{1}{6}\right)}\CE^2_{\infty}(t),~\text{if}~k=N
    \end{cases}
   \end{split}
  \end{equation}
and
\begin{eqnarray}\label{Z_1-on-Gamma}
\left\|\nu^{-l_2}\nabla^{N-1}_x\Gamma(u,u)\right\|_{Z_1}^2\lesssim (1+t)^{-N}\CE^2_{\infty}(t).
\end{eqnarray}
\end{lemma}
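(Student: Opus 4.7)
The plan is to combine the Leibniz expansion of $\partial^{\alpha}_{\beta}\Gamma(u,u)$, the pointwise-in-$x$ bilinear estimates of Lemma~\ref{JDE-lem.}, and an $L^{\infty}_xL^2_\xi$--$L^2_{x,\xi}$ splitting via Gagliardo--Nirenberg on $\mathbb{R}^3_x$, and then to substitute the decay rates recorded in $\mathcal{E}_{\infty}(t)$ through \eqref{decay_rates}.

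Writing
$$\partial^\alpha_\beta\Gamma(u,u)=\sum_{\substack{\beta_0+\beta_1+\beta_2=\beta\\\alpha_1+\alpha_2=\alpha}} C\,\Gamma^0\!\left(\partial^{\alpha_1}_{\beta_1}u,\partial^{\alpha_2}_{\beta_2}u\right),$$
I would apply Lemma~\ref{JDE-lem.} to each bilinear piece (shifting the weight parameter so that the outer weight becomes $w_{|\beta|-\ell}$; the hypothesis $\ell\geq N\geq|\beta|$ makes this legitimate). By symmetry assume $|\alpha_1|\leq\lfloor|\alpha|/2\rfloor$, place the small-$|\alpha_1|$ factor in $L^\infty_xL^2_\xi$ via $\|g\|^2_{L^\infty_xL^2_\xi}\lesssim \|\nabla_x g\|\,\|\nabla^2_x g\|$, and keep the large-$|\alpha_2|$ factor in $L^2_{x,\xi}$. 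Cauchy--Schwarz then produces a finite sum of products of $L^2_{x,\xi}$-norms of $w_{|\beta'|-\ell}\partial^{\alpha'}_{\beta'}u$ with $|\alpha'|\leq|\alpha_1|+2$ or $|\alpha'|=|\alpha_2|$ and $|\beta'|\leq|\beta|+2$. Because $N\geq 4$, $|\alpha_1|\leq|\alpha|/2$ and $|\alpha|+|\beta|\leq N$, every pair $(\alpha',\beta')$ lies in $U_{\alpha,\beta}$, so each factor is controlled by $\mathcal{E}_\infty(t)^{1/2}(1+t)^{-r_{|\alpha'|}/2}$ through \eqref{Energy}--\eqref{decay_rates}. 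Summing the exponents over the admissible splits yields the three rates in \eqref{estimate-on-Gamma}; the corrections $-\tfrac16$, $-\tfrac56$, $-\tfrac16$ appear exactly when a factor is pushed into $U^{\mathrm{high}}_{\alpha,\beta}$ and so receives the slower rate $\tfrac{N+2|\alpha|}{3}-\tfrac12$ in place of $|\alpha|+\tfrac12$.

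For the $Z_1$-bound \eqref{Z_1-on-Gamma}, I would replace the Sobolev step by the elementary inequality $\|fg\|_{L^1_x}\leq\|f\|_{L^2_x}\|g\|_{L^2_x}$ together with Minkowski in $L^2_\xi$, so that both factors remain in $L^2_{x,\xi}$; Lemma~\ref{JDE-lem.} is applied with the weight $\nu^{-l_2}$ in place of $w_{-\ell}$, and the constraints $\ell\geq l_2+1$, $\ell\geq l_2-\tfrac{2}{\gamma}$ absorb the change of weight. For $|\alpha|=N-1$ the smaller index satisfies $|\alpha_1|\leq\lfloor(N-1)/2\rfloor\leq N-3$, so every resulting pair lies in $U^{\mathrm{low}}_{\alpha,\beta}$ with rate $|\alpha_i|+\tfrac12$; summing gives $|\alpha_1|+|\alpha_2|+1=N$, as claimed.

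The main obstacle is the top-order case $k=N$: after Sobolev embedding, the small-$|\alpha_1|$ factor must carry $|\alpha_1|+2$ spatial derivatives, which for $|\alpha_1|\approx N/2$ lands inside $U^{\mathrm{high}}_{\alpha,\beta}$ and forces the use of the slower high-range rate. The very choice of $r_{|\alpha|}=\tfrac{N+2|\alpha|}{3}-\tfrac12$ in \eqref{decay_rates} is tailored precisely so that the induced $-\tfrac16$ (or $-\tfrac56$) loss can still be absorbed in the energy scheme behind Theorem~\ref{Thm}; careful bookkeeping of which pair $(\alpha',\beta')$ crosses into $U^{\mathrm{high}}_{\alpha,\beta}$, together with the weight compatibilities imposed by $\ell\geq\max\{N,l_2+1,l_2-2/\gamma\}$, will determine the exact correction in each sub-case.
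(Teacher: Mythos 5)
Your skeleton (Leibniz expansion, Lemma \ref{JDE-lem.}, a Sobolev splitting in $x$, then insertion of the rates \eqref{decay_rates}) is the right one, and you correctly identify that the fractional losses $-\tfrac16$, $-\tfrac56$ arise exactly when a factor is forced into $U^{\text{high}}_{\alpha,\beta}$. The $Z_1$ bound \eqref{Z_1-on-Gamma} via $\|fg\|_{L^1_x}\leq\|f\|_{L^2_x}\|g\|_{L^2_x}$ is also essentially the paper's argument. However, there is a genuine gap in your treatment of \eqref{estimate-on-Gamma}: you propose to put the small factor uniformly in $L^{\infty}_xL^2_{\xi}$ via $\|g\|^2_{L^{\infty}_xL^2_{\xi}}\lesssim\|\nabla_xg\|\,\|\nabla_x^2g\|$. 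But Lemma \ref{JDE-lem.} already forces that factor to carry up to two extra velocity derivatives ($|\bar\beta|\leq2$), so after the embedding it carries total order up to $|\alpha_1|+|\beta_1|+4$. This exceeds $N$ precisely in the delicate situations: for $N=4$ already at $k\leq N-2$ with $|\alpha_1|+|\beta_1|=1$ (order $5$), and at top order $k=N$ whenever $|\alpha_1|+|\beta_1|$ is near $N/2$ (order up to $N/2+4>N$ for $N<8$). Consequently your claim that ``every pair $(\alpha',\beta')$ lies in $U_{\alpha,\beta}$'' is false, and those norms are simply not controlled by $\CE_{\infty}(t)$; the argument as written cannot be closed under the hypothesis $N\geq4$ of the lemma.

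The paper avoids this by a more economical splitting: for $k\leq N-1$ (and for the middle range $1\leq|\alpha_1|+|\beta_1|<N/2$ when $k=N$) it uses the $L^3_x(L^2_\xi)$--$L^6_x(L^2_\xi)$ Gagliardo--Nirenberg pairing, which costs only \emph{one} extra $x$-derivative on each factor, and at $k=N$ it performs a three-way case analysis ($|\alpha_1|+|\beta_1|=0$, $1\leq|\alpha_1|+|\beta_1|<N/2$, $|\alpha_1|+|\beta_1|=N/2$), placing the $L^{\infty}_x(L^2_\xi)$ embedding only on whichever factor has room for the two extra spatial derivatives (the zero-order factor, or the complementary factor when $|\alpha_1|+|\beta_1|=N/2$, respectively). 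You would need to adopt such a splitting (or another one with the same derivative budget) and then redo the exponent bookkeeping: note also that the precise values $-\tfrac16$, $-\tfrac56$, $-\tfrac16$ in \eqref{estimate-on-Gamma} come out of that bookkeeping and are not derived in your proposal. A minor further point: since the expansion is in both $\alpha$ and $\beta$, the symmetry reduction should be stated as $|\alpha_1|+|\beta_1|\leq k/2$ rather than $|\alpha_1|\leq\lfloor|\alpha|/2\rfloor$, which is what the paper does and what the subsequent index checks require.
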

\begin{proof} Noticing that
$$
\partial^{\alpha}_{\beta}\Gamma(u,u)=\sum\limits_{\alpha_1+\alpha_2=\alpha\atop \beta_1+\beta_2\leq\beta}\Gamma\left(\partial^{\alpha_1}_{\beta_1}u,\partial^{\alpha_2}_{\beta_2}u\right)
\equiv \sum\limits_{\alpha_1+\alpha_2\leq\alpha\atop \beta_1+\beta_2\leq\beta}J^{\alpha_1,\alpha_2}_{\beta_1,\beta_2},
$$
we only need to deduce an suitable estimate on $J^{\alpha_1,\alpha_2}_{\beta_1,\beta_2}$ for $\alpha_1+\alpha_2=\alpha, \beta_1+\beta_2\leq\beta$ and for this purpose, we can assume without loss of our generality that $|\alpha_1|+|\beta_1|\leq \frac{k}{2}$. Our discussion will be divided into three cases:\\
\vskip 0.5mm
\noindent {\bf Case I:  $|\alpha|+|\beta|=k\leq N-2$.}\\
In this case, by employing Lemma \ref{JDE-lem.}, we have from the Gagliardo-Nirenberg interpolation inequality that
\begin{eqnarray*}
    J_{\beta_1,\beta_2}^{\alpha_1,\alpha_2}
    &\lesssim&\sum_{|\alpha_1|+|\beta_1|\leq \frac{k}{2}\atop |\bar{\beta}|\leq2}
    \left\|w_{|\beta_1+\bar{\beta}|-\ell}\partial^{\alpha_1}_{\beta_1+\bar{\beta}}u\right\|^2_{L^3_x(L^2_{\xi})}
    \left\|w_{|\beta_2|-\ell}\partial^{\alpha_2}_{\beta_2}u\right\|^2_{L^6_x(L^2_{\xi})}\\
    &\lesssim&\sum_{|\alpha_1|+|\beta_1|\leq \frac{k}{2}\atop |\bar{\beta}|\leq2}
    \left\|w_{|\beta_1+\bar{\beta}|-\ell}\partial^{\alpha_1}_{\beta_1+\bar{\beta}}u\right\|
    \left\|w_{|\beta_1+\bar{\beta}|-\ell}\partial^{\alpha_1}_{\beta_1+\bar{\beta}}\nabla_xu\right\|
    \left\|w_{|\beta_2|-\ell}\partial^{\alpha_2}_{\beta_2}\nabla_xu\right\|^2.
\end{eqnarray*}

Notice further that for $N>4$, the fact $|\alpha_1|+|\beta_1|\leq \frac{k}{2}\leq \frac N2-1$ implies that
$$
|\alpha_1|+|\beta_1+\bar{\beta}|\leq |\alpha_1|+|\beta_1|+2\leq \frac N2+1\leq N-1,
$$
$$
|\alpha_1|+|\beta_1+\bar{\beta}|+1\leq |\alpha_1|+|\beta_1|+3\leq \frac N2+2\leq N,
$$
but
$$
|\alpha_1|+1\leq \frac N2<N-2,
$$
and
$$
|\alpha_2|+|\beta_2|+1\leq|\alpha|+|\beta|+1\leq N-1,
$$
we thus can bound $J_{\beta_1,\beta_2}^{\alpha_1,\alpha_2}$ further by employing the definition of $\CE_{\infty}(t)$ defined by \eqref{Energy} as follows
\begin{eqnarray*}
    J_{\beta_1,\beta_2}^{\alpha_1,\alpha_2}
    &\lesssim&(1+t)^{-\left(\frac{|\alpha_1|}{2}+\frac 14\right)-\left(\frac{|\alpha_1|+1}{2}+\frac 14\right)-\left((|\alpha_2|+1)+\frac 12\right)}\CE^2_{\infty}(t)\\
    &\lesssim&(1+t)^{-\left(|\alpha|+\frac{5}{2}\right)}\CE^2_{\infty}(t).
\end{eqnarray*}

For the case $N=4$, we have $|\alpha_1|+|\beta_1|\leq 1$ and we only consider the more difficult case $|\alpha_1|=1$. In such a case, one has $|\alpha_1|+|\bar{\beta}|+1\leq 4=N$, but in such a case $|\alpha_1|+1=2=N-2$. Thus by the definition of $\CE_{\infty}(t)$ defined by \eqref{Energy}, one can only has
$$
\left\|w_{|\bar{\beta}|-\ell}\partial^{\alpha_1}_{\bar{\beta}}\nabla_xu\right\|\lesssim
(1+t)^{-\frac{|\alpha_1|+1}{2}-\frac 14+\frac 16}\CE^{\frac 12}_{\infty}(t),
$$
and consequently
\begin{eqnarray*}
    J_{\beta_1,\beta_2}^{\alpha_1,\alpha_2}
    &\lesssim&(1+t)^{-\left(\frac{|\alpha_1|}{2}+\frac 14-\frac 16\right)-\left(\frac{|\alpha_1|+1}{2}+\frac 14\right)-\left((|\alpha_2|+1)+\frac 12\right)}\CE^2_{\infty}(t)\\
    &\lesssim&(1+t)^{-\left(|\alpha|+\frac{5}{2}-\frac 16\right)}\CE^2_{\infty}(t).
\end{eqnarray*}
Putting the above estimates together yields the first estimate of \eqref{estimate-on-Gamma}.\\

\noindent{\bf Case II: $|\alpha|+|\beta|=N$.}\\
Such a case can be treated as in the case I by considering the cases $N>4$ and $N=4$ separately. In fact for such a case, from the definitions of $\CE_{\infty}(t)$ and  $r_{|\alpha|}$, we have
  \begin{equation*}
   \begin{tabular}{|c|c|c|c|c|}
     \hline
     $|\alpha|+|\beta|=N$& $|\alpha|=N$ & $|\alpha|=N-1$ & $|\alpha|=N-2$ & $|\alpha|\leq N-3$\\ \hline
     Decay of $\left\|w_{|\beta|-\ell}\partial^{\alpha}_{\beta}u\right\|^2$ & $(1+t)^{-|\alpha|+\frac{1}{2}}$ & $(1+t)^{-|\alpha|-\frac{1}{2}+\frac{2}{3}}$ & $(1+t)^{-|\alpha|-\frac{1}{2}+\frac{1}{3}}$ &$(1+t)^{-|\alpha|-\frac{1}{2}}$\\ \hline
   \end{tabular}
  \end{equation*}
Based on Lemma \ref{JDE-lem.} and the decay estimates above we designed, we can use Sobolev inequalities to estimate $J^{\alpha_1,\alpha_2}_{\beta_1,\beta_2}$ in different subcases: If $|\alpha_1|+|\beta_1|=0$, we have
  \begin{eqnarray*}
    J^{0,\alpha_2}_{0,\beta_2}
    &\lesssim&\sum_{|\bar{\beta}|\leq2}
    \sup_x\left\{\left|w_{|\bar{\beta}|-\ell}\partial_{\bar{\beta}}u\right|^2_{L^2_{\xi}}\right\}
    \left\|w_{|\beta_2|-\ell}\partial^{\alpha_2}_{\beta_2}u\right\|^2\\
    &\lesssim&\sum_{|\bar{\beta}|\leq2}\left\|w_{|\bar{\beta}|-\ell}\partial_{\bar{\beta}}\nabla_xu\right\|
    \left\|w_{|\bar{\beta}|-\ell}\partial_{\bar{\beta}}\nabla_x^2u\right\|
    \left\|w_{|\beta_2|-\ell}\partial^{\alpha_2}_{\beta_2}u\right\|^2\\
    &\lesssim&\begin{cases}
      (1+t)^{-\left(|\alpha|+\frac{3}{2}-\frac{1}{6}\right)}\CE^2_{\infty}(t),~\mbox{when~}|\alpha_2|=N,~|\beta_2|=0,\\
      (1+t)^{-\left(|\alpha|+\frac{5}{2}-\frac{2}{3}-\frac{1}{6}\right)}\CE^2_{\infty}(t),
      ~\mbox{when~}|\alpha_2|=N-1,~|\beta_2|=1,\\
      (1+t)^{-\left(|\alpha|+\frac{5}{2}-\frac{1}{3}-\frac{1}{6}\right)}\CE^2_{\infty}(t),
      ~\mbox{when~}|\alpha_2|=N-2,~|\beta_2|=2,\\
      (1+t)^{-\left(|\alpha|+\frac{5}{2}-\frac{1}{6}\right)}\CE^2_{\infty}(t),~\mbox{when~}|\alpha_2|\leq N-3;
    \end{cases}
\end{eqnarray*}
 If $1\leq |\alpha_1|+|\beta_1|<\frac{N}{2}$, we obtain from the fact $|\alpha_2|+|\beta_2|+1=N-|\alpha_1|-|\beta_1|+1\leq N$ that
   \begin{eqnarray*}
   J^{\alpha_1,\alpha_2}_{\beta_1,\beta_2}
    &\lesssim&\sum_{|\bar{\beta}|\leq2}\left\|w_{|\beta_1+\bar{\beta}|-\ell}
    \partial^{\alpha_1}_{\beta_1+\bar{\beta}}u\right\|^2_{L^3_x(L^2_{\xi})}
    \left\|w_{|\beta_2|-\ell}\partial^{\alpha_2}_{\beta_2}u\right\|^2_{L^6_x(L^2_{\xi})}\\
    &\lesssim&\sum_{|\bar{\beta}|\leq2}
    \left\|w_{|\beta_1+\bar{\beta}|-\ell}\partial^{\alpha_1}_{\beta_1+\bar{\beta}}u\right\|
    \left\|w_{|\beta_1+\bar{\beta}|-\ell}\partial^{\alpha_1}_{\beta_1+\bar{\beta}}\nabla_xu\right\|
    \left\|w_{|\beta_2|-\ell}\partial^{\alpha_2}_{\beta_2}\nabla_xu\right\|^2\\
    &\lesssim&(1+t)^{-\frac{|\alpha_1|+\frac{1}{2}}{2}}
    (1+t)^{-\frac{1}{2}\left(|\alpha_1|+\frac{3}{2}-\frac{1}{3}\right)}
      (1+t)^{-\left(|\alpha_2|+1-\frac{1}{2}\right)}\CE^2_{\infty}(t)\\
    &\lesssim&(1+t)^{-\left(|\alpha|+\frac{3}{2}-\frac 16\right)}\CE^2_{\infty}(t);
  \end{eqnarray*}
 If $|\alpha_1|+|\beta_1|=\frac{N}{2}$, we can deduce from the definitions of $\CE_{\infty}(t)$ that
  \begin{eqnarray*}
   J^{\alpha_1,\alpha_2}_{\beta_1,\beta_2} &\lesssim&\sum_{|\bar{\beta}|\leq2}
   \left\|w_{|\beta_1+\bar{\beta}|-\ell}\partial^{\alpha_1}_{\beta_1+\bar{\beta}}u\right\|^2
    \left\|w_{|\beta_2|-\ell}\partial^{\alpha_2}_{\beta_2}u\right\|^2_{L^{\infty}_x(L^2_{\xi})}\\
    &\lesssim&\sum_{|\bar{\beta}|\leq2}
    \left\|w_{|\beta_1+\bar{\beta}|-\ell}\partial^{\alpha_1}_{\beta_1+\bar{\beta}}u\right\|^2
    \left\|w_{|\beta_2|-\ell}\partial^{\alpha_2}_{\beta_2}\nabla_xu\right\|
    \left\|w_{|\beta_2|-\ell}\partial^{\alpha_2}_{\beta_2}\nabla_x^2u\right\|\\
    &\lesssim&(1+t)^{-\left(|\alpha_1|+\frac{1}{2}-\frac{1}{3}\right)}
    (1+t)^{-\frac{1}{2}\left(|\alpha_2|+\frac{3}{2}-\frac 23\right)}
    (1+t)^{-\frac{1}{2}\left(|\alpha_2|+2-\frac{1}{2}\right)}\CE^2_{\infty}(t)\\
    &\lesssim&(1+t)^{-\left(|\alpha|+\frac{3}{2}-\frac 16\right)}\CE^2_{\infty}(t).
  \end{eqnarray*}

\noindent{\bf Case III:  $|\alpha|+|\beta|=N-1$.}\\
For such a case, by repeating the argument used in dealing with the Case I, one can deduce that
  \begin{equation*}
    J^{\alpha_1,\alpha_2}_{\beta_1,\beta_2}
    \lesssim(1+t)^{-\left(|\alpha|+\frac{5}{2}-\frac{5}{6}\right)}\CE^2_{\infty}(t).
  \end{equation*}
Putting the estimates obtained in the above three cases together yield the estimate \eqref{estimate-on-Gamma} stated in Lemma \ref{Gamma}.

As to the estimate \eqref{Z_1-on-Gamma}, we have from the H$\ddot{o}$lder inequality, the fact that $N-1-m+|\beta|\leq N, N-1-m\leq N-3$ hold for $|\beta|\leq 2$ and $2\leq m\leq N-1$, the definition of $\CE_{\infty}(t)$, and Lemma \ref{JDE-lem.} that
\begin{eqnarray*}
&&\left\|\nu^{-l_2}\nabla^{N-1}_x\Gamma(u,u)\right\|_{Z_1}^2\\
&\lesssim& \sum\limits_{m\leq N-1}\left|\nu^{-l_2}\Gamma\left(\left\|\nabla_x^{N-1-m}u\right\|_{L^2_x},
\left\|\nabla_x^mu\right\|_{L^2_x}\right)\right|_{L^2_\xi}^2\\
&\lesssim& \sum\limits_{|\beta|\leq 2}\left\|w_{-\ell}\nabla_x^{N-1}u\right\|^2
\left\|w_{|\beta|-\ell}\partial_\beta u\right\|^2
+\sum\limits_{|\beta|\leq 2}\left\|w_{-\ell}\nabla_x^{N-2}u\right\|^2
\left\|w_{|\beta|-\ell}\partial_\beta\nabla_x u\right\|^2\\
&&+\sum\limits_{|\beta|\leq 2 \atop 2\leq m\leq N-1}\left\|w_{-\ell}\nabla_x^{m}u\right\|^2
\left\|w_{|\beta|-\ell}\partial_\beta \nabla_x^{N-1-m}u\right\|^2\\
&\lesssim&\left( (1+t)^{-\left(N-1+\frac 12\right)-\frac 12}
+(1+t)^{-\left(N-2+\frac 12\right)-\left(1+\frac 12\right)}
+(1+t)^{-\left(m+\frac 12\right)-\left(N-1-m+\frac 12\right)}\right)\CE^2_{\infty}(t)\\
&\lesssim&(1+t)^{-N}\CE^2_{\infty}(t).
\end{eqnarray*}
This is \eqref{Z_1-on-Gamma} and thus completes the proof of Lemma \ref{Gamma}.
\end{proof}

To deduce the corresponding estimates related to the other two nonlinear terms $\nabla_x\phi\cdot\nabla_{\xi}u_2$ and $\frac{1}{2}\xi\cdot\nabla_x\phi u_2$. To do so, we need the following interpolation lemma which will be used frequently in later sections.
\begin{lemma}\label{in.}
  For any $m\in \R$, $\eta>0$, $\gamma\in(-3,0)$, if furthermore provided $0<\vartheta\leq \frac{1}{4}$, one has
  \begin{eqnarray*}
    &&(1+t)^{m+\vartheta}\left\|\langle\xi\rangle^{\frac{1}{2}} w_{|\beta|-\ell}\partial^{\alpha}_{\beta}u\right\|^2\\
    &\lesssim&\eta(1+t)^m\left\|\langle\xi\rangle w_{|\beta|-\ell}
    \partial^{\alpha}_{\beta}u\right\|^2+C_{\eta}(1+t)^{m+(2-\gamma)\vartheta}\left\|w_{|\beta|-\ell}
    \partial^{\alpha}_{\beta}u\right\|_{\nu}^2\\
    &\lesssim&\eta\left((1+t)^m\left\|\langle\xi\rangle w_{|\beta|-\ell}
    \partial^{\alpha}_{\beta}u\right\|^2+(1+t)^{1+m+\vartheta}\left\|w_{|\beta|-\ell}
    \partial^{\alpha}_{\beta}u\right\|_{\nu}^2\right)
    +C_{\eta}\left\|w_{|\beta|-\ell}\partial^{\alpha}_{\beta}u\right\|_{\nu}^2,
  \end{eqnarray*}
  and
  \begin{eqnarray*}
    &&(1+t)^{m+\vartheta}\left\|w_{|\beta|-\ell}\partial^{\alpha}_{\beta}u\right\|^2\\
    &\lesssim&\eta(1+t)^m\left\|\langle\xi\rangle w_{|\beta|-\ell}
    \partial^{\alpha}_{\beta}u\right\|^2+C_{\eta}(1+t)^{m+\frac{2-\gamma}{2}\vartheta}\left\|w_{|\beta|-\ell}
    \partial^{\alpha}_{\beta}u\right\|_{\nu}^2\\
    &\lesssim&\eta\left((1+t)^m\left\|\langle\xi\rangle w_{|\beta|-\ell}
    \partial^{\alpha}_{\beta}u\right\|^2+(1+t)^{1+m+\vartheta}\left\|w_{|\beta|-\ell}
    \partial^{\alpha}_{\beta}u\right\|_{\nu}^2\right)
    +C_{\eta}\left\|w_{|\beta|-\ell}\partial^{\alpha}_{\beta}u\right\|_{\nu}^2.
  \end{eqnarray*}
\end{lemma}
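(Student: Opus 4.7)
The plan is to use a classical velocity-truncation interpolation, carried out pointwise in the spatial variable, followed by a Young-type inequality in the time variable to pass from the first form of each estimate to the second. Since $\nu(\xi)\sim\langle\xi\rangle^{\gamma}$ with $\gamma<0$, I would split $\R^{3}_{\xi}=\{\langle\xi\rangle\le R\}\cup\{\langle\xi\rangle>R\}$ for a threshold $R=R(\eta,t)>0$ chosen to balance the two resulting contributions. On the low-velocity region we have the pointwise bounds $\langle\xi\rangle\le R^{1-\gamma}\langle\xi\rangle^{\gamma}$ and $1\le R^{-\gamma}\langle\xi\rangle^{\gamma}$, while on the high-velocity region $\langle\xi\rangle\le R^{-1}\langle\xi\rangle^{2}$ and $1\le R^{-2}\langle\xi\rangle^{2}$. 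Multiplying through by $|w_{|\beta|-\ell}\partial^{\alpha}_{\beta}u|^{2}$ and integrating over $\R^{3}_{\xi}\times\R^{3}_{x}$ gives
\begin{equation*}
\|\langle\xi\rangle^{1/2}w_{|\beta|-\ell}\partial^{\alpha}_{\beta}u\|^{2}\lesssim R^{1-\gamma}\|w_{|\beta|-\ell}\partial^{\alpha}_{\beta}u\|_{\nu}^{2}+R^{-1}\|\langle\xi\rangle w_{|\beta|-\ell}\partial^{\alpha}_{\beta}u\|^{2}
\end{equation*}
and, without the $\langle\xi\rangle^{1/2}$ factor,
\begin{equation*}
\|w_{|\beta|-\ell}\partial^{\alpha}_{\beta}u\|^{2}\lesssim R^{-\gamma}\|w_{|\beta|-\ell}\partial^{\alpha}_{\beta}u\|_{\nu}^{2}+R^{-2}\|\langle\xi\rangle w_{|\beta|-\ell}\partial^{\alpha}_{\beta}u\|^{2}.
\end{equation*}

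Multiplying both estimates by $(1+t)^{m+\vartheta}$ and choosing $R=\eta^{-1}(1+t)^{\vartheta}$ in the first and $R=\eta^{-1/2}(1+t)^{\vartheta/2}$ in the second forces the high-velocity term in each case to become exactly $\eta(1+t)^{m}\|\langle\xi\rangle w_{|\beta|-\ell}\partial^{\alpha}_{\beta}u\|^{2}$. The low-velocity terms produce $C_{\eta}(1+t)^{m+(2-\gamma)\vartheta}\|w_{|\beta|-\ell}\partial^{\alpha}_{\beta}u\|_{\nu}^{2}$ and $C_{\eta}(1+t)^{m+\frac{2-\gamma}{2}\vartheta}\|w_{|\beta|-\ell}\partial^{\alpha}_{\beta}u\|_{\nu}^{2}$ respectively, which are precisely the first form of each claimed inequality.

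To pass from the first form to the second form in each pair, I would use the elementary bound $(1+t)^{a}\le\epsilon(1+t)^{b}+C_{\epsilon}$, valid for any $\epsilon>0$ and any $a<b$ (proved by splitting on whether $(1+t)^{b-a}\ge 1/\epsilon$, or directly by Young's inequality). For the first inequality the relevant exponents are $a=m+(2-\gamma)\vartheta$ and $b=1+m+\vartheta$, so $b-a=1-(1-\gamma)\vartheta$; this is strictly positive because $\gamma>-3$ and $\vartheta\le\frac{1}{4}$ together give $(1-\gamma)\vartheta<1$. For the second inequality $b-a=1+\gamma\vartheta/2$, which is likewise positive under the same hypotheses. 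Applying this with $\epsilon$ proportional to $\eta/C_{\eta}$ absorbs $C_{\eta}(1+t)^{a}\|\cdot\|_{\nu}^{2}$ into $\eta(1+t)^{1+m+\vartheta}\|\cdot\|_{\nu}^{2}+C_{\eta}\|\cdot\|_{\nu}^{2}$, which closes the estimate. The main (and really only) obstacle is bookkeeping: checking that the single hypothesis $\vartheta\le\frac{1}{4}$ is strong enough to guarantee $b>a$ uniformly in $\gamma\in(-3,0)$. The worst case is $\gamma\to-3^{+}$ in the first inequality, where $(1-\gamma)\vartheta\to 1^{-}$; this is precisely why the restriction $\vartheta\le\frac{1}{4}$ appears in the statement.
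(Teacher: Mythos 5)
Your proof is correct and follows essentially the same route as the paper: both arguments interpolate the intermediate norms between $\|\langle\xi\rangle w_{|\beta|-\ell}\partial^{\alpha}_{\beta}u\|$ and $\|w_{|\beta|-\ell}\partial^{\alpha}_{\beta}u\|_{\nu}$ and then balance the time weights, your velocity truncation at $R=R(\eta,t)$ being just an elementary realization of the H\"older--Young step the paper uses with exponents $\frac{1-\gamma}{2-\gamma},\frac{1}{2-\gamma}$ (resp. $\frac{-\gamma}{2-\gamma},\frac{2}{2-\gamma}$). The passage to the second form via $(1+t)^{a}\le\epsilon(1+t)^{b}+C_{\epsilon}$ and the exponent check $m+(2-\gamma)\vartheta<1+m+\vartheta$, $m+\frac{2-\gamma}{2}\vartheta<1+m+\vartheta$ under $0<\vartheta\le\frac{1}{4}$, $\gamma\in(-3,0)$ is exactly the paper's concluding step.
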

\begin{proof} Note that for any $m\in \R$, $\eta>0$, $\gamma\in(-3,0)$, any $\vartheta>0$,
$$
  m+\vartheta=\frac{1-\gamma}{2-\gamma}m+\frac{1}{2-\gamma}\left(m+(2-\gamma)\vartheta\right)
  =\frac{-\gamma}{2-\gamma}m+\frac{2}{2-\gamma}\left(m+\frac{2-\gamma}{2}\vartheta\right)
$$
and furthermore, if we provide $0<\vartheta\leq \frac{1}{4}$,
$$
m+(2-\gamma)\vartheta<1+m+\vartheta,\quad
m+\frac{2-\gamma}{2}\vartheta<1+m+\vartheta.
$$
The following  inequalities
  \begin{equation*}
    (1+t)^{m+\vartheta}\left\|\langle\xi\rangle^{\frac{1}{2}} w_{|\beta|-\ell}\partial^{\alpha}_{\beta}u\right\|^2
  \leq\left((1+t)^{m}\left\|\langle\xi\rangle w_{|\beta|-\ell}
    \partial^{\alpha}_{\beta}u\right\|^2\right)^{\frac{1-\gamma}{2-\gamma}}
    \left((1+t)^{m+(2-\gamma)\vartheta}\left\|\langle\xi\rangle^{\frac{\gamma}{2}}w_{|\beta|-\ell}
    \partial^{\alpha}_{\beta}u\right\|^2\right)^{\frac{1}{2-\gamma}}
  \end{equation*}
and
  \begin{equation*}
    (1+t)^{m+\vartheta}\left\|w_{|\beta|-\ell}\partial^{\alpha}_{\beta}u\right\|^2
  \leq\left((1+t)^{m}\left\|\langle\xi\rangle w_{|\beta|-\ell}
    \partial^{\alpha}_{\beta}u\right\|^2\right)^{\frac{-\gamma}{2-\gamma}}
    \left((1+t)^{m+\frac{2-\gamma}{2}\vartheta}\left\|\langle\xi\rangle^{\frac{\gamma}{2}}w_{|\beta|-\ell}
    \partial^{\alpha}_{\beta}u\right\|^2\right)^{\frac{2}{2-\gamma}}
  \end{equation*}
hold. Then we can deduce our result by further using Young's inequality and Cauchy-Schwarz's inequality for any sufficiently small $\eta>0$. This completes the proof of Lemma \ref{in.}.
\end{proof}

With Lemma \ref{in.} in hand, we now turn to deal with the corresponding weighted estimates on the derivatives of the nonlinear terms $\nabla_x\phi\cdot\nabla_xu_2$ and $\frac 12\xi\cdot\nabla_x\phi u_2$ with respect to the pure spatial variable and the mixed spatial and velocity variables.
\begin{lemma}\label{non-LINEAR}
  For $1\leq|\alpha|\leq N-1$ and any $\eta>0$, for the weighted estimates on the pure spatial derivatives of the nonlinear terms $\nabla_x\phi\cdot\nabla_xu_2$ and $\frac{1}{2}\xi\cdot\nabla_x\phi u_2$, we can deduce for $-3<\gamma<0$ that
  \begin{equation}\label{good-1}
 \begin{split}
  &\sum_{|\alpha_1|<|\alpha|}\left|\left(\partial^{\alpha-\alpha_1}\nabla_x\phi
  \cdot\nabla_{\xi}\partial^{\alpha_1}u_2,w_{-\ell}^2(t,\xi)\partial^{\alpha}u_2\right)\right|\\
  \lesssim&\eta\left((1+t)^{-1-\vartheta}\left\|\langle\xi\rangle w_{-\ell}\partial^{\alpha}u_2\right\|^2
   +\left\|w_{-\ell}\partial^{\alpha}u_2\right\|^2_{\nu}\right)\\
  &+C_{\eta}\sum_{|\alpha_1|<|\alpha|}(1+t)^{-\left(|\alpha-\alpha_1|+2\right)+(1+\vartheta)\frac{1-\gamma}{2-\gamma}}
   \left\|\langle\xi\rangle w_{1-\ell}\partial^{\alpha_1}\nabla_{\xi}u_2\right\|^2\CE_{\infty}(t)
   \end{split}
 \end{equation}
and
\begin{equation}\label{good-3}
  \begin{split}
  &\sum_{|\alpha_1|<|\alpha|}\left|\left(\xi\cdot
  \partial^{\alpha-\alpha_1}\nabla_x\phi
  \partial^{\alpha_1}u_2,w_{-\ell}^2(t,\xi)\partial^{\alpha}u_2\right)\right|\\
  \lesssim&\eta\left((1+t)^{-1-\vartheta}\left\|\langle\xi\rangle w_{-\ell}\partial^{\alpha}u_2\right\|^2
   +\left\|w_{-\ell}\partial^{\alpha}u_2\right\|_{\nu}^2\right)\\
  &+C_{\eta}\sum_{|\alpha_1|<|\alpha|}(1+t)^{-(|\alpha-\alpha_1|+2)+(1+\vartheta)\frac{1-\gamma}{2-\gamma}}
   \left\|\langle\xi\rangle^{\frac{1}{2}}w_{-\ell}\partial^{\alpha_1}u_2\right\|^2\CE_{\infty}(t).
  \end{split}
\end{equation}
Similarly, for the weighted estimates on the mixed spatial and velocity derivatives of the nonlinear terms $\nabla_x\phi\cdot\nabla_xu_2$ and $\frac 12\xi\cdot\nabla_x\phi u_2$, it holds for $|\alpha|+|\beta|=k\leq N,~|\beta|\geq1$, $-3<\gamma<0$, and any $\eta>0$ that
\begin{equation}\label{good-4}
  \begin{split}
  &\sum\limits_{|\alpha_1|<|\alpha|}\left|\left(\partial^{\alpha-\alpha_1}\nabla_x\phi\cdot
  \nabla_{\xi}\partial^{\alpha_1}_{\beta}u_2,w_{|\beta|-\ell}^2\partial^{\alpha}_{\beta}u_2\right)\right|\\
  \lesssim&\eta\left((1+t)^{-1-\vartheta}\left\|\langle\xi\rangle w_{|\beta|-\ell}\partial^{\alpha}_{\beta}u_2\right\|^2
  +\left\|w_{|\beta|-\ell}\partial^{\alpha}_{\beta}u_2\right\|_{\nu}^2\right)\\
  &+C_{\eta}\CE_{\infty}(t)\sum_{1\leq|\alpha_1|<|\alpha|}
  (1+t)^{-(|\alpha-\alpha_1|+2)+(1+\vartheta)\frac{1-\gamma}{2-\gamma}}
  \left\|\langle\xi\rangle w_{|\beta+e_i|-\ell}\nabla_{\xi}\partial^{\alpha_1}_{\beta}u_2\right\|^2
  \end{split}
\end{equation}
and
\begin{equation}\label{good-6}
  \begin{split}
 &\sum_{|\alpha_1|<|\alpha|}\left|\left(\xi\cdot\partial^{\alpha-\alpha_1}
  \nabla_x\phi\partial^{\alpha_1}_{\beta}u_2,w_{|\beta|-\ell}^2\partial^{\alpha}_{\beta}u_2\right)\right|\\
 \lesssim&\eta\left((1+t)^{-1-\vartheta}\left\|\langle\xi\rangle^{\frac{1}{2}} w_{|\beta|-\ell}\partial^{\alpha}_{\beta}u_2\right\|^2
  +\left\|w_{|\beta|-\ell}\partial^{\alpha}_{\beta}u_2\right\|_{\nu}^2\right)\\
  &+C_{\eta}\CE_{\infty}(t)\sum_{1\leq|\alpha_1|<|\alpha|}
  (1+t)^{-(|\alpha-\alpha_1|+2)+(1+\vartheta)\frac{1-\gamma}{2-\gamma}}
  \left\|\langle\xi\rangle^{\frac{1}{2}} w_{|\beta|-\ell}\nabla_{\xi}\partial^{\alpha_1}_{\beta}u_2\right\|^2.
  \end{split}
\end{equation}
\begin{proof} We will only prove the estimates \eqref{good-1} and \eqref{good-4} in details in the following since the proofs of the estimates \eqref{good-3} and \eqref{good-6} follows essentially the same way, we thus omit the details for brevity.

Firstly, for \eqref{good-1}, by noticing the fact $\langle\xi\rangle^{-\frac{\gamma}{2}-1}\leq\langle\xi\rangle^{\frac{1}{2}}$ for $-3<\gamma<0$, and the interpolation
\begin{eqnarray*}
  \left\|\langle\xi\rangle^{-\frac{\gamma}{2}-1}w_{-\ell}(t,\xi)\partial^{\alpha}u_2\right\|
  &\lesssim&\left\|\langle\xi\rangle^{\frac{1}{2}}w_{-\ell}(t,\xi)\partial^{\alpha}u_2\right\|\\
  &\lesssim&\left\|\langle\xi\rangle w_{-\ell}\partial^{\alpha}u_2\right\|^{\frac{1-\gamma}{2-\gamma}}
  \left\|\langle\xi\rangle^{\frac{\gamma}{2}}w_{-\ell}\partial^{\alpha}u_2\right\|^{\frac{1}{2-\gamma}},
\end{eqnarray*}
we can deduce that
 \begin{equation*}
 \begin{split}
  &\sum_{|\alpha_1|<|\alpha|}\left|\left(\partial^{\alpha-\alpha_1}\nabla_x\phi
  \cdot\nabla_{\xi}\partial^{\alpha_1}u_2,w_{-\ell}^2(t,\xi)\partial^{\alpha}u_2\right)\right|\\
  \lesssim&\sum_{|\alpha_1|<|\alpha|}\left|\left(\partial^{\alpha-\alpha_1}\nabla_x\phi
  \cdot\nabla_{\xi}\partial^{\alpha_1}u_2\langle\xi\rangle w_{1-\ell},\langle\xi\rangle^{-\frac{\gamma}{2}-1}
  w_{-\ell}(t,\xi)\partial^{\alpha}u_2\right)\right|\\
  \lesssim&\sum_{|\alpha_1|=|\alpha|-1}\left\|\partial^{\alpha-\alpha_1}\nabla_x\phi\right\|_{L^{\infty}}
  \left\|\langle\xi\rangle w_{1-\ell}\nabla_{\xi}\partial^{\alpha_1}u_2\right\|
  \left\|\langle\xi\rangle w_{-\ell}\partial^{\alpha}u_2\right\|^{\frac{1-\gamma}{2-\gamma}}
  \left\|\langle\xi\rangle^{\frac{\gamma}{2}}w_{-\ell}\partial^{\alpha}u_2\right\|^{\frac{1}{2-\gamma}}\\
  &+\sum_{|\alpha_1|\leq|\alpha|-2}\left\|\partial^{\alpha-\alpha_1}\nabla_x\phi\right\|_{L^3}
  \left\|\langle\xi\rangle w_{1-\ell}\nabla_{\xi}\partial^{\alpha_1}u_2\right\|_{L_x^6(L^2_{\xi})}
  \left\|\langle\xi\rangle w_{-\ell}\partial^{\alpha}u_2\right\|^{\frac{1-\gamma}{2-\gamma}}
  \left\|\langle\xi\rangle^{\frac{\gamma}{2}}w_{-\ell}\partial^{\alpha}u_2\right\|^{\frac{1}{2-\gamma}}\\
  \lesssim&\eta\left((1+t)^{-1-\vartheta}\left\|\langle\xi\rangle w_{-\ell}\partial^{\alpha}u_2\right\|^2
   +\left\|w_{-\ell}\partial^{\alpha}u_2\right\|^2_{\nu}\right)\\
  &+C_{\eta}\sum_{|\alpha_1|<|\alpha|}(1+t)^{-\left(|\alpha-\alpha_1|+2\right)+(1+\vartheta)\frac{1-\gamma}{2-\gamma}}
   \left\|\langle\xi\rangle w_{1-\ell}\partial^{\alpha_1}\nabla_{\xi}u_2\right\|^2\CE_{\infty}(t).
\end{split}
\end{equation*}
Here to deduce the last inequality in the above analysis, we have used the following estimate which is obtained by employing Young's inequality and Cauchy's inequality (Here we only write down the most difficult case $|\alpha_1|=|\alpha|-1$, similar estimates can also be established for the cases $|\alpha_1|\leq|\alpha|-2$.)
\begin{equation*}
  \begin{split}
    &\sum_{|\alpha_1|=|\alpha|-1}\left\|\partial^{\alpha-\alpha_1}\nabla_x\phi\right\|_{L^{\infty}}
  \left\|\langle\xi\rangle w_{1-\ell}\nabla_{\xi}\partial^{\alpha_1}u_2\right\|
  \left\|\langle\xi\rangle w_{-\ell}\partial^{\alpha}u_2\right\|^{\frac{1-\gamma}{2-\gamma}}
  \left\|\langle\xi\rangle^{\frac{\gamma}{2}}w_{-\ell}\partial^{\alpha}u_2\right\|^{\frac{1}{2-\gamma}}\\
  \lesssim&\sum_{|\alpha_1|=|\alpha|-1}(1+t)^{\frac{1}{2}(1+\vartheta)\times\frac{1-\gamma}{2-\gamma}}
  \left\|\partial^{\alpha-\alpha_1}\nabla_x\phi\right\|_{L^{\infty}}
  \left\|\langle\xi\rangle w_{1-\ell}\nabla_{\xi}\partial^{\alpha_1}u_2\right\|\\
  &\times\left[(1+t)^{-\frac{1}{2}(1+\vartheta)}\left\|\langle\xi\rangle w_{-\ell}\partial^{\alpha}u_2\right\|\right]^{\frac{1-\gamma}{2-\gamma}}
  \left\|\langle\xi\rangle^{\frac{\gamma}{2}}w_{-\ell}\partial^{\alpha}u_2\right\|^{\frac{1}{2-\gamma}}\\
   \lesssim&\sum_{|\alpha_1|=|\alpha|-1}(1+t)^{\frac{1}{2}(1+\vartheta)\times\frac{1-\gamma}{2-\gamma}}
  \left\|\partial^{\alpha-\alpha_1}\nabla_x\phi\right\|_{L^{\infty}}
  \left\|\langle\xi\rangle w_{1-\ell}\nabla_{\xi}\partial^{\alpha_1}u_2\right\|\\
  &\times\left[(1+t)^{-\frac{1}{2}(1+\vartheta)}
  \left\|\langle\xi\rangle w_{-\ell}\partial^{\alpha}u_2\right\|
  +\left\|w_{-\ell}\partial^{\alpha}u_2\right\|_{\nu}\right]\\
  \lesssim&\eta\left((1+t)^{-1-\vartheta}\left\|\langle\xi\rangle w_{-\ell}\partial^{\alpha}u_2\right\|^2
   +\left\|w_{-\ell}\partial^{\alpha}u_2\right\|^2_{\nu}\right)\\
   &+C_{\eta}\sum_{|\alpha_1|=|\alpha|-1}(1+t)^{(1+\vartheta)\frac{1-\gamma}{2-\gamma}}
   \left\|\partial^{\alpha-\alpha_1}\nabla_x\phi\right\|_{L^{\infty}}^2
   \left\|\langle\xi\rangle w_{1-\ell}\partial^{\alpha_1}\nabla_{\xi}u_2\right\|^2.
  \end{split}
\end{equation*}

Now we prove \eqref{good-4}, in fact, for any $|\alpha|+|\beta|\leq N,~|\beta|\geq1$, we have the following
\begin{eqnarray*}
  &&\sum\limits_{|\alpha_1|<|\alpha|}\left|\left(\partial^{\alpha-\alpha_1}\nabla_x\phi\cdot
  \nabla_{\xi}\partial^{\alpha_1}_{\beta}u_2,w_{|\beta|-\ell}^2\partial^{\alpha}_{\beta}u_2\right)\right|\\
  &\lesssim&\sum\limits_{\alpha_1<\alpha}\left|\left(\partial^{\alpha-\alpha_1}\nabla_x\phi\cdot
  \nabla_{\xi}\partial^{\alpha_1}_{\beta}u_2w_{|\beta+e_i|-\ell}\langle\xi\rangle^{\frac{\gamma}{2}},
  w_{|\beta|-\ell}\partial^{\alpha}_{\beta}u_2\right)\right|\\
  &\lesssim&\left(\sum_{|\alpha_1|=0}\Big[\chi_{|\alpha|=1}\left\|\nabla^2_x\phi\right\|_{L^{\infty}_x}
  \left\|\langle\xi\rangle w_{|\beta+e_i|-\ell}\nabla_{\xi}\partial_{\beta}u_2\right\|\right.
\end{eqnarray*}
\begin{eqnarray*}
  &&+\chi_{2\leq|\alpha|\leq N-2}\left\|\nabla_x\partial^{\alpha}\phi\right\|_{L^{3}_x}
  \left\|\langle\xi\rangle w_{|\beta+e_i|-\ell}\nabla_{\xi}\partial_{\beta}u_2\right\|_{L^6_x(L^2_{\xi})}\\
  &&+\chi_{|\alpha|=N-1,|\beta|=1}\left\|\nabla^N_x\phi\right\|
  \left\|\langle\xi\rangle w_{|\beta+e_i|-\ell}\nabla_{\xi}\partial_{\beta}u_2\right\|_{L^{\infty}_x(L^2_{\xi})}\Big]\\
  &&+\sum_{1\leq|\alpha_1|\leq |\alpha|-2}\chi_{|\alpha|\geq3}\left\|\partial^{\alpha-\alpha_1}\nabla_x\phi\right\|_{L^3}
  \left\|\langle\xi\rangle w_{|\beta+e_i|-\ell}\nabla_{\xi}\partial^{\alpha_1}_{\beta}u_2\right\|_{L^{6}_x(L^2_{\xi})}\\
  &&\left.+\sum_{|\alpha_1|=|\alpha|-1}\chi_{|\alpha|\geq1}
  \left\|\partial^{\alpha-\alpha_1}\nabla_x\phi\right\|_{L^{\infty}}
  \left\|\langle\xi\rangle w_{|\beta+e_i|-\ell}\nabla_{\xi}\partial^{\alpha_1}_{\beta}u_2\right\|\right)
  \left\|\langle\xi\rangle^{\frac{1}{2}}w_{|\beta|-\ell}\partial^{\alpha}_{\beta}u_2\right\|,
\end{eqnarray*}
which, by a similar interpolation techinuque as in the proof of \eqref{good-1}, can be further bounded by
\begin{eqnarray*}
  &&\eta\left((1+t)^{-1-\vartheta}\left\|\langle\xi\rangle w_{|\beta|-\ell}\partial^{\alpha}_{\beta}u_2\right\|^2
  +\left\|w_{|\beta|-\ell}\partial^{\alpha}_{\beta}u_2\right\|_{\nu}^2\right)\\
  &&+C_{\eta}\sum_{|\alpha_1|<|\alpha|}(1+t)^{-(|\alpha-\alpha_1|+2)+(1+\vartheta)\frac{1-\gamma}{2-\gamma}}
  \CE_{\infty}(t)\left\|\langle\xi\rangle w_{|\beta+e_i|-\ell}\nabla_{\xi}\partial^{\alpha_1}_{\beta}u_2\right\|^2\\
  &&+C_{\eta}\chi_{|\alpha|=N-1,|\beta|=1}(1+t)^{-N+\frac{1}{2}+(1+\vartheta)\frac{1-\gamma}{2-\gamma}}  \left\|\langle\xi\rangle w_{|\beta+e_i|-\ell}\nabla_{\xi}^2\nabla_{x}u_2\right\|\left\|\langle\xi\rangle w_{|\beta+e_i|-\ell}\nabla_{\xi}^2\nabla^2_{x}u_2\right\|\\
  &&+C_{\eta}\sum_{1\leq|\alpha_1|\leq |\alpha|-2}\chi_{|\alpha|\geq3}(1+t)^{-(|\alpha-\alpha_1|+1)+(1+\vartheta)\frac{1-\gamma}{2-\gamma}}
  \CE_{\infty}(t)\left\|\langle\xi\rangle w_{|\beta+e_i|-\ell}\nabla_{\xi}\partial^{\alpha_1+e_i}_{\beta}u_2\right\|^2\\
  &&+C_{\eta}\sum_{|\alpha_1|=|\alpha|-1}\chi_{|\alpha|\geq1}
  (1+t)^{-(|\alpha-\alpha_1|+2)+(1+\vartheta)\frac{1-\gamma}{2-\gamma}}
  \CE_{\infty}(t)\left\|\langle\xi\rangle w_{|\beta+e_i|-\ell}\nabla_{\xi}\partial^{\alpha_1}_{\beta}u_2\right\|^2\\
  &\lesssim&\eta\left((1+t)^{-1-\vartheta}\left\|\langle\xi\rangle w_{|\beta|-\ell}\partial^{\alpha}_{\beta}u_2\right\|^2
  +\left\|w_{|\beta|-\ell}\partial^{\alpha}_{\beta}u_2\right\|_{\nu}^2\right)\\
  &&+C_{\eta}\CE_{\infty}(t)\sum_{1\leq|\alpha_1|<|\alpha|}
  (1+t)^{-(|\alpha-\alpha_1|+2)+(1+\vartheta)\frac{1-\gamma}{2-\gamma}}
  \left\|\langle\xi\rangle w_{|\beta+e_i|-\ell}\nabla_{\xi}\partial^{\alpha_1}_{\beta}u_2\right\|^2.
 \end{eqnarray*}
Here to deduce the last inequality, we have used the following facts
\begin{equation*}
    \left\|\langle\xi\rangle w_{|\beta+e_i|-\ell}\nabla_{\xi}^2u_2\right\|^2_{L^{\infty}_x(L^2_{\xi})}
  \lesssim\left\|\langle\xi\rangle w_{|\beta+e_i|-\ell}\nabla_{x}\nabla_{\xi}^2u_2\right\|
  \left\|\langle\xi\rangle w_{|\beta+e_i|-\ell}\nabla_{x}^2\nabla_{\xi}^2u_2\right\|,
\end{equation*}
and
\begin{eqnarray*}
\begin{array}{rl}
 -\left(N-\frac{1}{2}\right)+(1+\vartheta)\frac{1-\gamma}{2-\gamma}=&
 \left\{-\frac{1}{2}\left[((N-1)-1+2)+(1+\vartheta)\frac{1-\gamma}{2-\gamma}\right]\right\}\\
 &+\left\{-\frac{1}{2}\left[((N-1)-2+2)+(1+\vartheta)\frac{1-\gamma}{2-\gamma}\right]\right\}.
\end{array}
\end{eqnarray*}

This completes the proof of Lemma \ref{non-LINEAR}.
\end{proof}
\end{lemma}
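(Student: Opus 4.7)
The unifying strategy is to split the weight square $w_{|\beta|-\ell}^2$ so as to shift one algebraic power of $\langle\xi\rangle^{-\gamma/2}$ onto the test function $\partial^\alpha_\beta u_2$, producing the factor $\langle\xi\rangle^{1/2}w_{|\beta|-\ell}\partial^\alpha_\beta u_2$ (this uses $-3<\gamma<0$, so $-\gamma/2-1\leq 1/2$). Once such a factor is isolated, Lemma \ref{in.} converts it into a linear combination of the extra dissipation $(1+t)^{-1-\vartheta}\|\langle\xi\rangle w_{|\beta|-\ell}\partial^\alpha_\beta u_2\|^2$ and the coercive dissipation $\|w_{|\beta|-\ell}\partial^\alpha_\beta u_2\|_\nu^2$, both of which appear on the right-hand side of the lemma and can absorb the "good" half of a Young-inequality split. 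On the other side of the pairing, one places the gradient factor $\partial^{\alpha-\alpha_1}\nabla_x\phi$ against $\langle\xi\rangle w_{|\beta+e_i|-\ell}\nabla_\xi\partial^{\alpha_1}_\beta u_2$ (respectively $\langle\xi\rangle^{1/2}w_{|\beta|-\ell}\partial^{\alpha_1}_\beta u_2$ for \eqref{good-3}, \eqref{good-6}), and extracts the decay in $t$ from $\CE_\infty(t)$.

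For \eqref{good-1} and \eqref{good-3}, I would first bound the inner product by H\"older, choosing the pair $L^\infty_x\cdot L^2_x$ when $|\alpha-\alpha_1|=1$ and $L^3_x\cdot L^6_x(L^2_\xi)$ when $|\alpha-\alpha_1|\geq 2$. The $L^\infty_x$ and $L^3_x$ norms of $\partial^{\alpha-\alpha_1}\nabla_x\phi$ are controlled by the Sobolev embedding $H^2\hookrightarrow L^\infty$, $H^1\hookrightarrow L^6\hookrightarrow L^3$, applied to $\nabla^{|\alpha-\alpha_1|+1}\phi$; by the definition of $\CE_\infty(t)$, each such norm carries decay $(1+t)^{-(|\alpha-\alpha_1|+2)/2}\CE_\infty^{1/2}(t)$, which squared gives the decay factor in the lemma. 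The remaining $\|\langle\xi\rangle^{1/2}w_{-\ell}\partial^\alpha u_2\|$ is then processed via Lemma \ref{in.} with $m=-1-\vartheta$ (inflated by the Young-inequality factor $(1+t)^{(1+\vartheta)(1-\gamma)/(2(2-\gamma))}$), yielding precisely the exponent $-(|\alpha-\alpha_1|+2)+(1+\vartheta)\frac{1-\gamma}{2-\gamma}$ stated in the lemma.

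For the mixed-derivative estimates \eqref{good-4} and \eqref{good-6}, the same template works, but one must separately track two endpoint situations: the case $|\alpha_1|=0$ combined with small $|\alpha|$, and the delicate case $|\alpha|=N-1,|\beta|=1$. In the latter, $\partial^{\alpha-\alpha_1}\nabla_x\phi=\nabla^N_x\phi$ has only $L^2_x$ control, so I would put it in $L^2_x$ and place $\langle\xi\rangle w_{|\beta+e_i|-\ell}\nabla_\xi\partial_\beta u_2$ in $L^\infty_x(L^2_\xi)$, controlling the latter by the one-dimensional Sobolev-type bound $\|f\|_{L^\infty_x}^2\lesssim\|\nabla_x f\|\|\nabla_x^2 f\|$ and splitting the resulting decay budget evenly across the two factors. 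The bookkeeping that I expect to be the main obstacle is verifying that the resulting exponent $-(N-\tfrac12)+(1+\vartheta)\frac{1-\gamma}{2-\gamma}$ decomposes cleanly as the average of two exponents of the form $-(|\alpha-\alpha_1|+2)+(1+\vartheta)\frac{1-\gamma}{2-\gamma}$ so that the final sum matches the template in the right-hand side of \eqref{good-4}; once that algebraic identity is confirmed, the remaining quadratic factors are absorbed into the sum over $1\leq|\alpha_1|<|\alpha|$ weighted by $\CE_\infty(t)$, and the proof closes by the same Young/interpolation device as in the pure-spatial case.
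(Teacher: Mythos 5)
Your proposal follows essentially the same route as the paper: split the weight so that a factor $\langle\xi\rangle^{1/2}w_{|\beta|-\ell}\partial^\alpha_\beta u_2$ (or $\langle\xi\rangle^{-\gamma/2-1}w_{-\ell}\partial^\alpha u_2$) is isolated and then interpolated between $\langle\xi\rangle w$ and $\langle\xi\rangle^{\gamma/2}w$ with Young's inequality, use H\"older with $L^\infty_x\cdot L^2$ for $|\alpha-\alpha_1|=1$ and $L^3_x\cdot L^6_x(L^2_\xi)$ otherwise, extract the decay of $\partial^{\alpha-\alpha_1}\nabla_x\phi$ from $\CE_\infty(t)$, and handle the endpoint $|\alpha|=N-1,\,|\beta|=1$ by placing $\nabla^N_x\phi$ in $L^2$ against an $L^\infty_x(L^2_\xi)$ norm controlled by $\|\nabla_x f\|\,\|\nabla^2_x f\|$ with the decay exponent split evenly between the two factors. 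This is exactly the paper's argument, so the proposal is correct.
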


\section{Lower Order Energy Estimates}
The following two sections are concerned with certain energy type estimates on the solution $u(t,x,\xi)$ of the one-species VPB system \eqref{u} in terms of the temporal energy functional $\mathcal{E}_\infty(t)$. We first notice that the arguments used in \cite{Guo-CPAM-2002} and \cite{Guo-ARMA-2003} to deduce the corresponding local solvability, after a straightforward modification, can indeed be applied to the Cauchy problem of the one-species VPB system \eqref{u} for the whole range of cutoff intermolecular interactions to yield our desired local solvability result. Assume that such a local solution $u(t,x,\xi)$ has been extended to the time step $t=T$ for some $T>0$, that is, $u(t,x,\xi)$ is a solution of the Cauchy problem of the one-species VPB system \eqref{u} defined on the strip $\prod_T=[0,T]\times{\mathbb{R}_x^3}\times{\mathbb{R}_\xi^3}$. Now we turn to deduce certain energy type estimates on $u(t,x,\xi)$
in terms of the temporal energy functional $\mathcal{E}_\infty(t)$ defined by \eqref{Energy}, under the assumption that the temporal energy functional $\mathcal{E}_\infty(t)$ is sufficiently small for all $0\leq t\leq T$.

The desired energy type estimates will be carried out in this and the coming sections and the main purpose of this section is concerned with the lower order energy estimates. To make the presentation easy to follow, we divide this section into two subsections. The first subsection is about the non-weighted estimates.

\subsection{Non-weighted Estimates}
The main purpose of this subsection is to prove the following lemma which yields the optimal temporal decay estimates on the $L^2-$norm of the pure spatial derivatives of $u(t,x,\xi)$ up to the order $N-1$. Such an estimate will play an important role in our analysis.
\begin{lemma}\label{non-weight} Let $\frac 34<p<1,$ $\ell\geq\max\left\{l_2+1,l_2-\frac 2\gamma, \frac{4p-2}{4p-3}l_2+1\right\}$, $l_2>N+\frac 12$. Then for any $0\leq m\leq N-1$, the solution $(u(t,x,\xi),\phi(t,x))$ of the Cauchy problem of the one-species VPB system \eqref{u} satisfies the following decay estimates
  \begin{equation*}
    \left\|\nabla^m_xu\right\|^2+\left\|\nabla_x^{m+1}\phi\right\|^2\lesssim (1+t)^{-\left(m+\frac{1}{2}\right)}\left(\epsilon_0+\CE^2_{\infty}(t)\right)
  \end{equation*}
 for all $0\leq t\leq T$ provided that $(u(t,x,\xi),\phi(t,x))$ is assumed to satisfy the a priori assumption
\begin{equation}\label{a priori assumption}
\mathcal{E}_\infty(t)\leq \delta,\quad 0\leq t\leq T
\end{equation}
for some sufficiently small $\delta>0$. Here $\epsilon_0$ is defined in the statement of our Theorem \ref{Thm} which measures the smallness of the initial perturbation.
\end{lemma}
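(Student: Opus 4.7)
The plan is to combine the Duhamel representation of \eqref{u} with the linear decay estimate of Lemma \ref{lem.-decay}, using the decay rates already encoded in $\mathcal{E}_\infty(t)$ to control the nonlinear source. Writing \eqref{u} abstractly as $\partial_t u = \mathbb{B} u + N[u]$, where $\mathbb{B}$ is the full linearized operator (including the $\mathbf{M}^{1/2}\xi \cdot \nabla_x\phi$ coupling) and
$$N[u] = \Gamma(u,u) - \nabla_x\phi \cdot \nabla_\xi u + \tfrac{1}{2}\xi \cdot \nabla_x\phi\, u,$$
Duhamel's principle gives
$$u(t) = e^{t\mathbb{B}} u_0 + \int_0^t e^{(t-\tau)\mathbb{B}} N[u](\tau)\, d\tau.$$
The Poisson equation yields $\|\nabla_x^{m+1}\phi\| \lesssim \|\nabla_x^m u\|$ by elliptic regularity, so it suffices to establish the claimed decay of $\|\nabla_x^m u(t)\|$.

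Next I would invoke the linear $L^2$--$Z_1$ decay estimate of Lemma \ref{lem.-decay}, which in the one-species soft-potential framework takes the schematic form
$$\|\nabla_x^m e^{t\mathbb{B}} f\|^2 \lesssim (1+t)^{-(m+1/2)}\bigl(\|\langle\xi\rangle^{-l_2\gamma/2} f\|_{Z_1}^2 + \|\nabla_x^m f\|^2\bigr),$$
so the homogeneous piece is immediately bounded by $(1+t)^{-(m+1/2)}\epsilon_0$. For the Duhamel integral, the $\Gamma(u,u)$ contribution is handled by estimate \eqref{Z_1-on-Gamma} of Lemma \ref{Gamma}, which supplies $(1+\tau)^{-N}\mathcal{E}^2_\infty(\tau)$ decay in the weighted $Z_1$ norm (and analogous but weaker bounds at other orders via the same proof). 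The electric-field nonlinearities $\nabla_x\phi \cdot \nabla_\xi u$ and $\xi \cdot \nabla_x\phi\, u$ are controlled by H\"older and Sobolev inequalities together with the polynomial decay of $\nabla_x\phi$ and of the weighted norms of $u$ read off from $\mathcal{E}_\infty(t)$ and Lemma \ref{non-LINEAR}.

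Inserting these source bounds into the Duhamel formula reduces the proof to the standard time-convolution estimate
$$\int_0^t (1+t-\tau)^{-(m+1/2)}(1+\tau)^{-a}\, d\tau \lesssim (1+t)^{-(m+1/2)} \quad (a > 1),$$
from which the conclusion follows after exploiting the a priori bound $\mathcal{E}_\infty(t) \leq \delta$ to absorb the nonlinear contributions proportional to $\mathcal{E}^2_\infty(t)$ on the right-hand side.

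The main obstacle will be handling the velocity-growing term $\xi \cdot \nabla_x\phi\, u$ in the weighted $Z_1$ norm required by the linear estimate: absorbing the extra $\xi$ factor costs a weight $\langle\xi\rangle^{-l_2\gamma/2}$, which is precisely why the hypothesis requires $l_2 > N + 1/2$ together with $\ell \geq \max\{l_2+1,\ l_2 - 2/\gamma,\ \frac{4p-2}{4p-3}l_2 + 1\}$. The last of these bounds arises from an interpolation (in the spirit of Lemma \ref{in.}) between the algebraic factor $\langle\xi\rangle^{\frac{\gamma}{2}(|\beta|-\ell)}$ and the exponential factor $e^{q\langle\xi\rangle^2/(1+t)^\vartheta}$ of the weight $w_{|\beta|-\ell}(t,\xi)$, whereas the borderline condition $p > 3/4$ is exactly what is needed to keep the resulting time-weighted integrals convergent.
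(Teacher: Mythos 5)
Your overall skeleton (Duhamel via \eqref{Duhamel} plus the linear decay of Lemma \ref{lem.-decay} and the source bounds of Lemma \ref{Gamma}) does match the paper's starting point, but the reduction you propose — ``insert the source bounds and invoke the standard convolution estimate with $a>1$'' — does not close, and it skips the two steps that are the actual content of the paper's proof. First, the relevant $Z_1$ norm of the source decays far too slowly for your convolution lemma: by \eqref{2-z1} with $m=0$ one only has $\|\langle\xi\rangle^{-\gamma l_2/2}G(\tau)\|_{Z_1}\lesssim(1+\tau)^{-1/2}\CE_\infty$, which is not integrable in time, so applying the schematic $L^2$--$Z_1$ semigroup bound to each time slice and convolving loses a factor $(1+t)^{1/2}$. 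The paper avoids this by working directly with the frequency-space kernel \eqref{G-decay} and splitting into $|k|\le 1$ versus $|k|\ge 1$ and $[0,t/2]$ versus $[t/2,t]$ (the terms $K_1,K_2,K_3$ in \eqref{trouble}): on $[0,t/2]$ the low-frequency kernel integral $\int_{|k|\le1}|k|^{2m}(1+|k|^2(1+t-\tau))^{-l_2}dk$ supplies the stronger rate $(1+t-\tau)^{-\frac12(m+\frac32)}$, which is exactly what compensates the non-integrable $(1+\tau)^{-1/2}$; on $[t/2,t]$ one instead needs $\|\langle\xi\rangle^{-\gamma l_2/2}\nabla_x^m G\|_{Z_1}\lesssim(1+\tau)^{-\frac12(m+1)}$, i.e.\ the full \eqref{2-z1}; and the high-frequency piece uses the uniform $(1+t-\tau)^{-l_2}$ decay with the $L^2$ bound \eqref{2-z} and $l_2>m+\frac32$. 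None of this structure is visible in your proposal.

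Second, and more seriously, your plan says nothing about the borderline case $m=N-1$, which is where the new idea of this lemma lives: the needed bound on $\|\langle\xi\rangle^{-\gamma l_2/2}\nabla_x^{N-1}G\|_{Z_1}$ fails because $\nabla_x^{N-1}\nabla_\xi u_2$ only decays like $(1+t)^{-(N-\frac12-\frac23)}$. The paper resolves this by decomposing $G=G_1+G_2$ with $G_2=-\nabla_x\phi\cdot\nabla_\xi\{{\bf I-P}\}u$, splitting the Fourier weight as $|k|^{2(N-1)}=|k|^2\,|k|^{2(N-2)}$ so that one derivative is paid for by the kernel, and then using \eqref{N-1_z}, whose proof in turn hinges on the interpolation $\|\langle\xi\rangle^{-\gamma l_2/2}\nabla_x^{N-2}\nabla_\xi u_2\|^2\lesssim(1+t)^{-N+1}\CE_\infty$ between $\|w_{1-\ell}\nabla_x^{N-2}\nabla_\xi u_2\|$ and the unweighted norm whose almost-optimal rate $(1+t)^{-(N-\frac12)+2(1-p)}$ comes from the appendix (\eqref{A-u2-opt2.}, \eqref{A-u2-wei2.}). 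This is also where the hypotheses $p>\frac34$ and $\ell\ge\frac{4p-2}{4p-3}l_2+1$ actually enter — not, as you suggest, through convergence of time-weighted integrals or an interpolation between the algebraic and exponential parts of the weight. Without the $K_1/K_2/K_3$ splitting and the $m=N-1$ decomposition your argument has a genuine gap at precisely the optimal rate the lemma claims.
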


 Although the main ideas to deduce this lemma are along the same line as in \cite{Duan-Strain-ARMA-2011} for the hard sphere model and \cite{Xiao-Xiong-Zhao-JDE-2013} for cutoff moderately soft potentials, that is, by combining the temporal decay estimates on the solution operator of the corresponding linearized system of the one-species VPB system \eqref{u} together with Duhamel's principle, the analysis for the whole range of cutoff soft potentials is quite complex since we try to deduce the optimal temporal decay estimates on the $L^2-$norm of the pure spatial derivatives of $u(t,x,\xi)$ up to the order $N-1$, thus we prove it in details in the following. To this end, we first deduce some estimates on $G(t,x,\xi)=\frac{1}{2}\xi \cdot\nabla_{x}\phi u-\nabla_{x}\phi\cdot\nabla_{\xi }u+{\Gamma}(u,u)$.
\begin{lemma} If $\frac 34<p<1,$ $\ell\geq\max\left\{l_2+1,l_2-\frac 2\gamma, \frac{4p-2}{4p-3}l_2+1\right\}$, we have the following estimates on $G(t,x,\xi)$:
\begin{itemize}
\item For $0\leq m\leq N-1$, it holds that
\begin{equation}\label{2-z}
\left\|\langle\xi\rangle^{-\frac{\gamma}{2}l_2}\nabla^m_xG(t)\right\|^2
\lesssim(1+t)^{-\left(m+\frac{5}{2}-\frac{5}{6}\right)}\CE^2_{\infty}(t);
\end{equation}
\item For $0\leq m\leq N-2$, one has
\begin{equation}\label{2-z1}
\left\|\langle\xi\rangle^{-\frac{\gamma}{2}l_2}\nabla^m_xG(t)\right\|_{Z_1}^2
\lesssim (1+t)^{-(m+1)}\CE^2_{\infty}(\tau);
\end{equation}
\item For the case of $m=N-1$, if we decompose $G$ as $G=G_1+G_2$ where
$G_1=\frac{1}{2}\xi \cdot\nabla_{x}\phi u-\nabla_{x}\phi\cdot\nabla_{\xi}{\bf P}u+{\Gamma}(u,u)$ and
$G_2=-\nabla_{x}\phi\cdot\nabla_{\xi}{\{\bf I-P\}}u$, then we can get that
\begin{eqnarray}\label{N-1_z}
\left\|\langle\xi\rangle^{-\frac{\gamma}{2}l_2}\nabla^{N-1}_xG_1(t)\right\|_{Z_1}^2&\lesssim&(1+t)^{-N}\CE^2_{\infty}(\tau),\\
\left\|\langle\xi\rangle^{-\frac{\gamma}{2}l_2}\nabla^{N-2}_xG_2(t)\right\|_{Z_1}^2
&\lesssim&(1+t)^{-N+\frac{1}{2}}\CE^2_{\infty}(t).\nonumber
\end{eqnarray}
\end{itemize}
\end{lemma}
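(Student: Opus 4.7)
The plan is to split $G = G^{(1)}+G^{(2)}+G^{(3)}$ with $G^{(1)}=\frac{1}{2}\xi\cdot\nabla_x\phi\,u$, $G^{(2)}=-\nabla_x\phi\cdot\nabla_\xi u$, and $G^{(3)}=\Gamma(u,u)$, and to bound each weighted derivative norm separately. The collision piece $G^{(3)}$ is controlled directly by Lemma \ref{Gamma}: estimate \eqref{estimate-on-Gamma} at $|\beta|=0$ delivers \eqref{2-z}, while \eqref{Z_1-on-Gamma} (applied as stated for $m=N-1$, and by the same Hölder-plus-Lemma \ref{JDE-lem.} argument for $m\leq N-2$) delivers the collision contributions to \eqref{2-z1} and \eqref{N-1_z}. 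The main work is therefore on the two electrostatic terms. The key algebraic input is the translation of weights: the hypotheses $\ell\geq l_2-2/\gamma$ and $\ell\geq l_2+1$ yield
\begin{equation*}
\langle\xi\rangle\,\langle\xi\rangle^{-\frac{\gamma}{2}l_2}\lesssim \langle\xi\rangle^{-\frac{\gamma}{2}\ell}\lesssim w_{-\ell}(t,\xi),\qquad \langle\xi\rangle^{-\frac{\gamma}{2}l_2}\lesssim w_{1-\ell}(t,\xi),
\end{equation*}
which respectively absorb the $\xi$-factor in $G^{(1)}$ and convert the $\langle\xi\rangle^{-\gamma l_2/2}$-weight into the $w_{1-\ell}$-weight that $\CE_\infty(t)$ directly controls on $\nabla_\xi u$.

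For the $L^2_{x,\xi}$ bound \eqref{2-z} with $0\leq m\leq N-1$, I would expand $\nabla_x^m G^{(i)}$ for $i=1,2$ by Leibniz and distribute $L^\infty_x\cdot L^2_x$ or $L^3_x\cdot L^6_x$ according to whether $|\alpha_1|$ or $|\alpha_2|$ is smaller, invoking $H^2\hookrightarrow L^\infty$ and $H^1\hookrightarrow L^6$. Since every multi-index that appears satisfies $|\alpha_i|\leq m\leq N-1$, the resulting quantities lie in $U^{\text{low}}_{\alpha,\beta}$, giving $\|\nabla_x^{|\alpha_1|+1}\phi\|^2\lesssim(1+t)^{-(|\alpha_1|+1/2)}\CE_\infty$ and (after the weight translation) $\|w_{-\ell}\partial^{\alpha_2}u\|^2$ or $\|w_{1-\ell}\partial^{\alpha_2}\nabla_\xi u\|^2 \lesssim(1+t)^{-(|\alpha_2|+1/2)}\CE_\infty$. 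The product is at worst $(1+t)^{-(m+5/2)}\CE_\infty^2(t)$, comfortably beating the target; the dominant contribution therefore comes from $G^{(3)}$ with rate $(1+t)^{-(m+5/2-5/6)}\CE_\infty^2(t)$ from Lemma \ref{Gamma}. The $Z_1$ bound \eqref{2-z1} for $0\leq m\leq N-2$ is identical except the Sobolev embedding is replaced by Cauchy--Schwarz in $x$, $\|fg\|_{L^1_x}\leq\|f\|_{L^2_x}\|g\|_{L^2_x}$; this produces clean $L^2_x\cdot L^2_x$ products whose decays combine to $(1+t)^{-(m+1)}\CE_\infty^2(t)$.

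The delicate case is $m=N-1$ in $Z_1$, which is exactly why the decomposition $G=G_1+G_2$ is introduced. If one tried to treat $\nabla_x^{N-1}G^{(2)}$ uniformly, the worst Leibniz piece $\nabla_x\phi\cdot\nabla_x^{N-1}\nabla_\xi u_2$ would require control of $\|w_{1-\ell}\nabla_x^{N-1}\nabla_\xi u_2\|^2$, but $(\alpha,\beta)=(N-1,1)$ lives in the high-regime block $U^{\text{high}}_{\alpha,\beta}$ with $r_{N-1}=N-7/6$, yielding only $(1+t)^{-N+2/3}\CE_\infty^2(t)$ after multiplication by $\|\nabla_x\phi\|^2$. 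The split is designed to isolate this obstruction. For $G_2=-\nabla_x\phi\cdot\nabla_\xi u_2$ one only goes up to $\nabla_x^{N-2}$: then every $\partial^{\alpha_2}\nabla_\xi u_2$ appearing has $|\alpha_2|+|e_i|\leq N-1$ and so lies in $U^{\text{low}}_{\alpha,\beta}$, and the Cauchy--Schwarz/Leibniz argument of the previous paragraph closes to give the stated $(1+t)^{-N+1/2}\CE_\infty^2(t)$. For $G_1$ one keeps the full $\nabla_x^{N-1}$ derivative: the $\frac{1}{2}\xi\cdot\nabla_x\phi u$ term is handled precisely by the $Z_1$ scheme above (now with $|\alpha|=N-1$) and gives exactly $(1+t)^{-N}\CE_\infty^2(t)$; the term $\nabla_x\phi\cdot\nabla_\xi{\bf P}u$ is benign because $\nabla_\xi{\bf P}u$ is a polynomial in $\xi$ times ${\bf M}^{1/2}(\xi)$, so its $\langle\xi\rangle^{-\gamma l_2/2}$-weighted $L^2_\xi$ norm is controlled by a spatial $L^2_x$ norm of $(a,b,c)(t,x)$ and hence by the corresponding norm of $\partial^{\alpha}u$, and an $L^2_x\cdot L^2_x$ product yields $(1+t)^{-N}\CE_\infty^2(t)$; the $\Gamma(u,u)$ piece is exactly \eqref{Z_1-on-Gamma}.

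The main obstacle is recognizing and justifying the macro--micro split at $m=N-1$: the only piece resisting a uniform $\nabla_x^{N-1}$ treatment is the purely microscopic $\nabla_x\phi\cdot\nabla_\xi u_2$, and peeling it off as $G_2$ forces the $\nabla_\xi$ in $G_1$ to hit only ${\bf P}u$, where the Maxwellian factor neutralizes the $\xi$-growth and the rate recovers the optimal low-regime value. Once this decomposition is adopted, the remainder is mechanical Leibniz bookkeeping combined with the two weight translations and the decay rates built into $\CE_\infty(t)$.
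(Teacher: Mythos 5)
Your handling of the first two bullets and of $G_1$ follows essentially the paper's route (Leibniz expansion, the weight translations $\langle\xi\rangle^{1-\frac{\gamma}{2}l_2}\lesssim w_{-\ell}$ and $\langle\xi\rangle^{-\frac{\gamma}{2}l_2}\lesssim w_{1-\ell}$, and the decay rates stored in $\CE_\infty$), but your claim for $G_2$ does not follow from the bookkeeping you describe, and this is a genuine gap. Writing $\nabla^{N-2}_xG_2=\sum_{m_1\le N-2}\nabla^{m_1+1}_x\phi\cdot\nabla^{N-2-m_1}_x\nabla_\xi u_2$ and applying Cauchy--Schwarz in $x$, the low-regime rates give $\|\nabla^{m_1+1}_x\phi\|^2\lesssim(1+t)^{-(m_1+\frac12)}\CE_\infty(t)$ and $\|\langle\xi\rangle^{-\frac{\gamma}{2}l_2}\nabla^{N-2-m_1}_x\nabla_\xi u_2\|^2\lesssim\|w_{1-\ell}\nabla^{N-2-m_1}_x\nabla_\xi u_2\|^2\lesssim(1+t)^{-(N-2-m_1+\frac12)}\CE_\infty(t)$, so each product is only $(1+t)^{-(N-1)}\CE^2_\infty(t)=(1+t)^{-N+1}\CE^2_\infty(t)$, not the stated $(1+t)^{-N+\frac12}\CE^2_\infty(t)$. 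That half power is not cosmetic: in the $K_2$ estimate for $m=N-1$ the extra $|k|^2$ only produces $(1+t-\tau)^{-5/4}$, and one needs the squared $Z_1$ norm of $\nabla^{N-2}_xG_2$ to decay at rate $N-\frac12$ to recover $(1+t)^{-\frac12(N-\frac12)}$ for $\|\nabla^{N-1}_xu\|$. The paper closes this with an ingredient your proof never touches: it interpolates $\|\langle\xi\rangle^{-\frac{\gamma}{2}l_2}\nabla^{N-2}_x\nabla_\xi u_2\|^2\lesssim\|w_{1-\ell}\nabla^{N-2}_x\nabla_\xi u_2\|^{\frac{2l_2}{\ell-1}}\|\nabla^{N-2}_x\nabla_\xi u_2\|^{2(1-\frac{l_2}{\ell-1})}$ and invokes the almost-optimal unweighted microscopic decay $\|\nabla^{N-2}_x\nabla_\xi u_2\|^2\lesssim(1+t)^{-(N-\frac12)+2(1-p)}\CE_\infty(t)$ encoded in the last term of \eqref{Energy} (cf. \eqref{A-u2-opt2.}); the hypotheses $\frac34<p<1$ and $\ell\ge\frac{4p-2}{4p-3}l_2+1$ — which your argument never uses anywhere — are exactly what make this interpolation yield $(1+t)^{-N+1}\CE_\infty(t)$ for that factor, so that multiplication by $\|\nabla_x\phi\|^2\lesssim(1+t)^{-\frac12}\CE_\infty(t)$ reaches $(1+t)^{-N+\frac12}\CE^2_\infty(t)$. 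Without this (or some substitute producing extra decay on the $\nabla_\xi u_2$ factors), your proof of \eqref{N-1_z}$_2$, and hence the application in Lemma \ref{non-weight}, does not close.

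A smaller inaccuracy: in your argument for \eqref{2-z} you assert that every pair appearing lies in $U^{\text{low}}_{\alpha,\beta}$, but for $m=N-1$ the Leibniz piece $\nabla_x\phi\cdot\nabla^{N-1}_x\nabla_\xi u$ involves $(\alpha,\beta)=(N-1,e_i)\in U^{\text{high}}_{\alpha,\beta}$ with $r_{N-1}=N-\frac76$ — precisely the obstruction you yourself identify later for the $Z_1$ case. The conclusion of \eqref{2-z} survives because there the $\phi$-factor is taken in $L^\infty_x$, with $\|\nabla_x\phi\|^2_{L^\infty}\lesssim(1+t)^{-2}\CE_\infty(t)$, so the product still decays like $(1+t)^{-(m+\frac52-\frac23)}$, which beats the target $(1+t)^{-(m+\frac52-\frac56)}$; but as written your claim is internally inconsistent and should be corrected.
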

\begin{proof} The most difficult cases lie in how to deal with the highest order derivatives of $G(t,x,\xi)$ with respect to $x$ and $\xi$.

Noticing that $G(t,x,\xi)=\frac{1}{2}\xi \cdot\nabla_{x}\phi u-\nabla_{x}\phi\cdot\nabla_{\xi }u+{\Gamma}(u,u)$, we can get for $0\leq m\leq N-1$ that
\begin{equation}\label{2-z-proof}
\begin{split}
\left\|\langle\xi\rangle^{-\frac{\gamma}{2}l_2}\nabla^m_xG(t)\right\|^2
&\lesssim\sum_{m_1\leq\frac{m}{2}}
\left\|\nabla^{m_1+1}_x\phi\right\|_{L^{\infty}_x}^2
\left(\left\|\langle\xi\rangle^{-\frac{\gamma}{2}l_2+1}\nabla^{m-m_1}_xu\right\|^2
+\left\|\langle\xi\rangle^{-\frac{\gamma}{2}l_2}\nabla^{m-m_1}_x\nabla_{\xi}u\right\|^2\right)\\
+&\sum_{m_1>\frac{m}{2}}\left\|\nabla^{m_1+1}_x\phi\right\|^2
\left(\left\|\langle\xi\rangle^{-\frac{\gamma}{2}l_2+1}\nabla^{m-m_1}_xu\right\|_{L^{\infty}_x(L^2_{\xi})}^2
+\left\|\langle\xi\rangle^{-\frac{\gamma}{2}l_2}\nabla^{m-m_1}_x\nabla_{\xi}u\right\|_{L^{\infty}_x(L^2_{\xi})}^2\right)\\
&+\left\|\langle\xi\rangle^{-\frac{\gamma}{2}l_2}\nabla^m_x\Gamma(u,u)\right\|^2.
\end{split}
\end{equation}
Similarly, one can get for $0\leq m\leq N-2$ that
\begin{equation}\label{2-z1-proof}
\begin{split}
\left\|\langle\xi\rangle^{-\frac{\gamma}{2}l_2}\nabla^m_xG(t)\right\|_{Z_1}^2
\lesssim&\sum_{m_1\leq m}\left\|\nabla^{m_1+1}_x\phi\right\|^2
\left(\left\|\langle\xi\rangle^{-\frac{\gamma}{2}l_2}\nabla^{m-m_1}_xu\right\|^2
+\left\|\langle\xi\rangle^{-\frac{\gamma}{2}l_2}\nabla^{m-m_1}_x\nabla_{\xi}u\right\|^2\right)\\
&+\left\|\langle\xi\rangle^{-\frac{\gamma}{2}l_2}\nabla^m_x\Gamma(u,u)\right\|_{Z_1}^2.
\end{split}
\end{equation}
To estimate the terms in the right hand sides of \eqref{2-z-proof} and \eqref{2-z1-proof} in terms of $\CE_{\infty}(t)$, if we let $\ell\geq\max\left\{l_2+1,l_2-\frac 2\gamma\right\}$, we can first deduce from the definition of $\CE_{\infty}(t)$  that
\begin{eqnarray*}
\left\|\nabla^{m_1}_x\nabla_x\phi(t)\right\|^2&\lesssim& (1+t)^{-(m_1+\frac{1}{2})}\CE_{\infty}(t),\quad m_1\leq N-1,\\
\left\|\langle\xi\rangle^{-\frac{\gamma}{2}l_2+1}\nabla^{m-m_1}_xu(t)\right\|^2&\lesssim& (1+t)^{-\left(m-m_1+\frac{1}{2}\right)}\CE_{\infty}(t),\quad m-m_1\leq N-1,\\
\left\|\langle\xi\rangle^{-\frac{\gamma}{2}l_2}\nabla^{m-m_1}_x\nabla_{\xi}u(t)\right\|^2
  &\lesssim&
  \left\{
  \begin{array}{l}
  (1+t)^{-\left(m-m_1+\frac{1}{2}\right)}\CE_{\infty}(t),\quad m-m_1\leq N-2,\\[2mm]
  (1+t)^{-\left(m-m_1+\frac 12-\frac 23\right)}\CE_{\infty}(t),\quad m-m_1= N-1.
  \end{array}
  \right.
\end{eqnarray*}
Secondly, for the case $0\leq m\leq N-1$, $m_1\leq \frac m2$, we can deduce that
$$
m_1+2\leq \frac{N-1}{2}+2=\frac{N+3}{2}<N,
$$
we thus get from the definition of $\CE_{\infty}(t)$ and the Gagliardo-Nirenberg interpolation inequality that
\begin{eqnarray*}
\left\|\nabla^{m_1}_x\nabla_x\phi(t)\right\|_{L^{\infty}_x}^2\lesssim\left\|\nabla^{m_1+1}_x\phi(t)\right\|^{\frac 12}\left\|\nabla^{m_1+3}_x\phi(t)\right\|^{\frac 32}
\lesssim(1+t)^{-(m_1+2)}\CE_{\infty}(t).
\end{eqnarray*}
Thirdly, to deduce an estimate on $\left\|\langle\xi\rangle^{-\frac{\gamma}{2}l_2+1}\nabla^{m-m_1}_xu\right\|_{L^{\infty}_x(L^2_{\xi})}$ and $\left\|\langle\xi\rangle^{-\frac{\gamma}{2}l_2}\nabla^{m-m_1}_x\nabla_{\xi}u\right\|_{L^{\infty}_x(L^2_{\xi})}$ for the case $m_1>\frac m2$ and $0\leq m\leq N-1$. In such a case, since $m-m_1+2<\frac m2+2\leq \frac{N-1}{2}+2=\frac{N+3}{2}<N$, we can also get from the definition of $\CE_{\infty}(t)$, $\ell\geq\max\left\{l_2+1,l_2-\frac 2\gamma\right\}$, and the Gagliardo-Nirenberg interpolation inequality that
\begin{eqnarray*}
\left\|\langle\xi\rangle^{-\frac{\gamma}{2}l_2+1}\nabla^{m-m_1}_xu(t)\right\|_{L^{\infty}_x(L^2_{\xi})}^2
  &\lesssim& \left\|\langle\xi\rangle^{-\frac{\gamma}{2}l_2+1}\nabla^{m-m_1}_xu(t)\right\|^{\frac 12}\left\|\langle\xi\rangle^{-\frac{\gamma}{2}l_2+1}\nabla^{m-m_1+2}_xu(t)\right\|^{\frac 32}\\
  &\lesssim&  (1+t)^{-(m-m_1+2)}\CE_{\infty}(t)
\end{eqnarray*}
and
\begin{equation*}
 \begin{split}
  \left\|\langle\xi\rangle^{-\frac{\gamma}{2}l_2}\nabla^{m-m_1}_x\nabla_{\xi}u(t)\right\|_{L^{\infty}_x(L^2_{\xi})}^2
  \lesssim&\left\|\langle\xi\rangle^{-\frac{\gamma}{2}l_2}\nabla^{m-m_1+1}_x\nabla_{\xi}u(t)\right\|
  \left\|\langle\xi\rangle^{-\frac{\gamma}{2}l_2}\nabla^{m-m_1+2}_x\nabla_{\xi}u(t)\right\|\\
  \lesssim&
      (1+t)^{-\frac{1}{2}\left(m-m_1+1+\frac{1}{2}\right)}(1+t)^{-\frac{1}{2}\left(m-m_1+2+\frac{1}{2}-\frac{2}{3}\right)}
    \CE_{\infty}(t)\\
   \lesssim& (1+t)^{-\left(m-m_1+2-\frac 13\right)}\CE_{\infty}(t).
 \end{split}
\end{equation*}
Here we have used the fact that $m-m_1+2\leq N-1$ and $m-m_1+3\leq N$ but $m-m_1+2\leq N-1$.

On the other hand, by exploiting Lemma \ref{JDE-lem.} and Lemma \ref{Gamma}, $\ell\geq\max\left\{l_2+1,l_2-\frac 2\gamma\right\}$, and the definition of $\CE_{\infty}(t)$, one has
\begin{equation*}
 \left\|\langle\xi\rangle^{-\frac{\gamma}{2}l_2}\nabla^m_x\Gamma(u,u)\right\|^2
 \lesssim
 \begin{cases}
   (1+t)^{-\left(m+\frac{5}{2}-\frac 16\right)}\CE_{\infty}^2(t),\quad \text{if}~0\leq m\leq N-2\\
   (1+t)^{-\left(m+\frac{5}{2}-\frac{5}{6}\right)}\CE_{\infty}^2(t),\quad\text{if}~m=N-1.
 \end{cases}
\end{equation*}
Substituting the above estimates into \eqref{2-z-proof} and \eqref{2-z1-proof}, we can get \eqref{2-z} and \eqref{2-z1} immediately.

At last, we prove \eqref{N-1_z}. To this end, noticing
$G_1=\frac{1}{2}\xi \cdot\nabla_{x}\phi u-\nabla_{x}\phi\cdot\nabla_{\xi}u_1+{\Gamma}(u,u)$,
$$
\left\|\langle\xi\rangle^{-\frac{\gamma}{2}l_2}\nabla^{(N-1)-m_1}_x\nabla_{\xi}u_1(t)\right\|\lesssim
\left\|\nabla^{(N-1)-m_1}_xu_1(t)\right\|,
$$
and due to
$N-1-m_1+1\leq N$ with $N-1-m_1\leq N-1$, one can get from the estimate \eqref{Z_1-on-Gamma} and the definition of $\CE_{\infty}(t)$ that \eqref{N-1_z}$_1$ can be bounded by
\begin{equation*}
\begin{split}
\left\|\langle\xi\rangle^{-\frac{\gamma}{2}l_2}\nabla^{N-1}_xG_1(t)\right\|_{Z_1}^2\lesssim&\sum_{m_1\leq
N-1}
\left(\left\|\langle\xi\rangle^{-\frac{\gamma}{2}l_2+1}\nabla^{(N-1)-m_1}_xu(t)\right\|^2
+\left\|\langle\xi\rangle^{-\frac{\gamma}{2}l_2}\nabla^{(N-1)-m_1}_x\nabla_{\xi}u_1(t)\right\|^2\right)\\
&\times\left\|\nabla^{m_1+1}_x\phi(t)\right\|^2
+\left\|\langle\xi\rangle^{-\frac{\gamma}{2}l_2}\nabla^{N-1}_x\Gamma(u,u)\right\|_{Z_1}^2\\
\lesssim&(1+t)^{-m_1-\frac 12}(1+t)^{-\left(N-1-m_1+\frac 12\right)}\CE^2_{\infty}(t)+(1+t)^{-N}\CE^2_{\infty}(t)\\
\lesssim&(1+t)^{-N}\CE^2_{\infty}(t).
\end{split}
\end{equation*}
This proves \eqref{Z_1-on-Gamma}$_1$.

For \eqref{Z_1-on-Gamma}$_2$, notice that the standard interpolation argument together with the last term in definition of $\CE_{\infty}$ tell us that
\begin{eqnarray*}
\left\|\langle\xi\rangle^{-\frac{\gamma}{2}l_2}\nabla^{N-2}_x\nabla_{\xi}u_2\right\|^2
  &\lesssim&\left\|w_{1-\ell}\nabla^{N-2}\nabla_{\xi}u_2\right\|^{\frac{2l_2}{\ell-1}}
  \left\|\nabla^{N-2}\nabla_{\xi}u_2\right\|^{2\left(1-\frac{l_2}{\ell-1}\right)}\\
  &\lesssim&(1+t)^{-\left(N-\frac{3}{2}\right)\frac{l_2}{\ell-1}-\left[\left(N-\frac{1}{2}\right)-2(1-p)\right]
  \left(1-\frac{l_2}{\ell-1}\right)}
  \CE_{\infty}(t)\\
   &\lesssim&(1+t)^{-N+1}\CE_{\infty}(t)
 \end{eqnarray*}
hold if $\frac{3}{4}<p<1$ and $\ell\geq \frac{4p-2}{4p-3}l_2+1$ and from which we can get that
\begin{equation*}
\begin{split}
\left\|\langle\xi\rangle^{-\frac{\gamma}{2}l_2}\nabla^{N-2}_xG_2(t)\right\|_{Z_1}^2
\lesssim&\sum_{m_1\leq N-2}
\left\|\langle\xi\rangle^{-\frac{\gamma}{2}l_2}\nabla^{(N-2)-m_1}_x\nabla_{\xi}u_2(t)\right\|^2
\left\|\nabla^{m_1+1}_x\phi(t)\right\|^2\\
\lesssim&(1+t)^{-N+\frac{1}{2}}\CE^2_{\infty}(t).
\end{split}
\end{equation*}
This is exactly \eqref{N-1_z}$_2$ and the proof of Lemma 3.2 is complete.
\end{proof}

With Lemma 3.2 in hand, we now turn to prove Lemma \ref{non-weight}.
\begin{proof}
Firstly, by Duhamel's principle we can write the solution $u(t,x,\xi)$ to \eqref{u} as the solution of the following integral equation
\begin{equation}\label{Duhamel}
u(t)=e^{tB}u_0+\displaystyle\int_0^te^{(t-\tau)B}G(\tau)d\tau.
\end{equation}
Here $G=\frac{1}{2}\xi \cdot\nabla_{x}\phi u-\nabla_{x}\phi\cdot\nabla_{\xi }u+{\Gamma}(u,u)$ and $e^{tB}u_0$ denotes the  solution operator of the following linearized equation of the one-species VPB system \eqref{VPB}
\begin{eqnarray*}\label{linearized-solution-operator}
\left\{
\begin{array}{l}
u_t-Bu=0,\quad Bu=-\xi\cdot\nabla_x u+\nabla_x\phi\cdot\xi {\bf M}^{\frac 12}+{\bf L}u,\\[2mm]
\Delta_x\phi={\displaystyle\int_{{\mathbb{R}^3}}}{\bf M}^{\frac 12}ud\xi,\\[2mm]
u(0,x,\xi)=u_0(x,\xi),
\end{array}
\right.
\end{eqnarray*}
whose temporal decay estimates will be be given in Lemma \ref{lem.-decay}.

Therefore, applying Lemma \ref{lem.-decay} (with $l_0=0$) to \eqref{Duhamel} yields
\begin{eqnarray}\label{m}
&&\left\|\nabla^m_xu(t)\right\|^2+\left\|\nabla^{m+1}_x\phi(t)\right\|^2\\
&\lesssim&(1+t)^{-m-\frac{1}{2}}\left(\left\|\langle\xi\rangle^{-\frac{\gamma}{2}l_2}\nabla^{m}_xu_0\right\|^2
+\left\|\langle\xi\rangle^{-\frac{\gamma}{2}l_2}u_0\right\|^{2}_{Z_1}\right)
+\left\{\int_0^t\left\|\nabla^{m}_xe^{(t-\tau)B}G(\tau)\right\|d\tau\right\}^2.\nonumber
\end{eqnarray}
Noticing ${\bf P}_0G=0$, for any $t>0,~k\in{\mathbb{ R}}^3$, it further follows from (\ref{G-decay}) in  Lemma \ref{lem.-decay} that
\begin{equation*}
 \left|\widehat{G}(t,k,\xi)\right|^2_{L^2_{\xi}}\lesssim\left\{1+\frac{|k|^2}{1+|k|^2}(t-\tau)\right\}^{-l_2}
 \left|\langle\xi\rangle^{-\frac{\gamma}{2}l_2}\widehat{G}(\tau,k,\xi)\right|^2_{L^2_{\xi}}.
\end{equation*}
Consequently, we can deduce that
\begin{eqnarray}\label{trouble}
&&\int_0^t\left\|\nabla^{m}_xe^{(t-\tau)B}G(\tau)\right\|d\tau\nonumber\\
&\lesssim&\int_0^t\left\{\int_{{\bf R}^3}|k|^{2m}\left(1+\frac{|k|^2(t-\tau)}{1+|k|^2}\right)^{-l_2}
\left|\langle\xi\rangle^{-\frac{\gamma}{2}l_2}
\widehat{G}(\tau,k,\xi)\right|^2_{L^2_{\xi}}dk\right\}^{1/2}d\tau\nonumber\\
&\lesssim&\underbrace{\int_0^{t/2}\left\{\int_{|k|\leq 1}|k|^{2m}\left(1+|k|^2(1+t-\tau)\right)^{-l_2}dk\cdot
\sup_{k}\left\{\left|\langle\xi\rangle^{-\frac{\gamma}{2}l_2}
\widehat{G}(\tau,k,\xi)\right|^2_{L^2_{\xi}}\right\}\right\}^{1/2}d\tau}_{K_1}\\
&&+\underbrace{\int_{t/2}^t\left\{\int_{|k|\leq 1}|k|^{2m}\left(1+|k|^2(1+t-\tau)\right)^{-l_2}
\left|\langle\xi\rangle^{-\frac{\gamma}{2}l_2}
\widehat{G}(\tau,k,\xi)\right|^2_{L^2_{\xi}}dk\right\}^{1/2}d\tau}_{K_2}\nonumber\\
&&+\underbrace{\int_0^t(1+t-\tau)^{-l_2}
\left\|\langle\xi\rangle^{-\frac{\gamma}{2}l_2}\nabla_x^mG(\tau)\right\|d\tau}_{K_3}.\nonumber
\end{eqnarray}

To deduce desired bounds on $K_1$, $K_2$, and $K_3$, for the first term $K_1$, by \eqref{2-z1} with $m=0$ and by noticing that
\begin{equation*}
\sup_{k}\left\{\left|\langle\xi\rangle^{-\frac{\gamma}{2}l_2}\widehat{G}(\tau,k,\xi)\right|_{L^2_{\xi}}\right\}
\lesssim\left\|\langle\xi\rangle^{-\frac{\gamma}{2}l_2}G(\tau)\right\|_{Z_1},
\end{equation*}
it is easy to bound $K_1$ by
\begin{eqnarray*}
K_1&\lesssim& \CE_{\infty}(t)\left\{\int_0^{t/2}(1+t-\tau)^{-\frac{1}{2}\left(m+\frac{3}{2}\right)}(1+\tau)^{-\frac{1}{2}}d\tau \sup_{0\leq\tau\leq t/2}\int_0^{(1+t-\tau)^{1/2}}y^{2m+2}(1+y^2)^{-l_2}dy\right\}^{\frac{1}{2}}\\
    &\lesssim&(1+t)^{-\frac{1}{2}\left(m+\frac{1}{2}\right)}\CE_{\infty}(t)
\end{eqnarray*}
for $l_2>m+\frac{3}{2}$. And for $K_3$, since $0\leq m\leq N-1$, one can also get from \eqref{2-z} that
\begin{equation*}
  K_3\lesssim \CE_{\infty}(t)\int_0^t(1+t-\tau)^{-l_2}
  (1+\tau)^{-\frac{1}{2}\left(m+\frac{5}{2}-\frac{5}{6}\right)}d\tau
    \lesssim(1+t)^{-\frac{1}{2}\left(m+\frac{1}{2}\right)}\CE_{\infty}(t)
\end{equation*}
provided that $l_2>m+\frac{3}{2}$.

It is more subtle to deal with the term $K_2$. For this purpose, we divide the estimation of this term into two cases:  For the case $m\leq N-2$, similar to that of $K_1$, one use \eqref{2-z} to deduce that
\begin{eqnarray*}
   K_2&\lesssim&\int_{t/2}^t\left\{\int_{|k|\leq 1}\left(1+|k|^2(1+t-\tau)\right)^{-l_2}dk
  \cdot\sup_{k}\left\{|k|^{2m}
  \left|\langle\xi\rangle^{-\frac{\gamma}{2}l_2}\widehat{G}(\tau,k,\xi)\right|^2_{L^2_{\xi}}\right\}\right\}^{1/2}d\tau\\
  &\lesssim&\CE_{\infty}(t)\int_{t/2}^t(1+t-\tau)^{-\frac{3}{4}}(1+\tau)^{-\frac{1}{2}(m+1)}d\tau\\
  &\lesssim&(1+t)^{-\frac{1}{2}\left(m+\frac{1}{2}\right)}\CE_{\infty}(t),
 \end{eqnarray*}
while for the case of $m=N-1$, recalling the decomposition
$G=G_1+G_2$ with $G_1=\frac{1}{2}\xi \cdot\nabla_{x}\phi
u-\nabla_{x}\phi\cdot\nabla_{\xi}u_1+{\Gamma}(u,u)$ and
$G_2=-\nabla_{x}\phi\cdot\nabla_{\xi}u_2$, we use
 \eqref{N-1_z} and the fact
$$
\sup_{k}\left\{|k|^{2m}
  \left|\langle\xi\rangle^{-\frac{\gamma}{2}l_2}\widehat{G_2}(\tau,k,\xi)\right|^2_{L^2_{\xi}}\right\}
\lesssim \left\|\langle\xi\rangle^{-\frac{\gamma}{2}l_2}\nabla_x^mG_2(\tau)\right\|^2_{Z_1}
$$
 to obtain
\begin{eqnarray*}
   K_2&\lesssim&\int_{t/2}^t\left\{\int_{|k|\leq 1}\left(1+|k|^2(1+t-\tau)\right)^{-l_2}dk
  \cdot\sup_{k}\left\{|k|^{2(N-1)}
  \left|\langle\xi\rangle^{-\frac{\gamma}{2}l_2}\widehat{G_1}(\tau,k,\xi)\right|^2_{L^2_{\xi}}\right\}\right\}^{1/2}d\tau\\
  &&+\int_{t/2}^t\left\{\int_{|k|\leq 1}|k|^2\left(1+|k|^2(1+t-\tau)\right)^{-l_2}dk
  \cdot\sup_{k}\left\{|k|^{2(N-2)}
  \left|\langle\xi\rangle^{-\frac{\gamma}{2}l_2}\widehat{G_2}(\tau,k,\xi)\right|^2_{L^2_{\xi}}\right\}\right\}^{1/2}d\tau\\
  &\lesssim&\CE_{\infty}(t)\left(\int_{t/2}^t(1+t-\tau)^{-\frac{3}{4}}(1+\tau)^{-\frac{N}{2}}d\tau
  +\int_{t/2}^t(1+t-\tau)^{-\frac{5}{4}}(1+\tau)^{-\frac{1}{2}(N-\frac{1}{2})}d\tau\right)\\
  &\lesssim&(1+t)^{-\frac{1}{2}\left(N-1+\frac{1}{2}\right)}\CE_{\infty}(t)\\
  &=&(1+t)^{-\frac{1}{2}\left(m+\frac{1}{2}\right)}\CE_{\infty}(t).
 \end{eqnarray*}
 Here we have used the facts that $1/(1+|k|^2)\geq 1/2$ when $|k|\leq1$, $|k|^2/(1+|k|^2)\geq1/2$ when $|k|\geq1$, and the assumption $l_2>\frac{3}{2}+m$.

Inserting the above estimates on $K_1, K_2$, and $K_3$ into \eqref{m} yields our desired Lemma \ref{non-weight}.
\end{proof}

\subsection{Weighted Estimates}
This subsection is devoted to deducing the desired weighted estimates on $u(t,x,\xi)$. Noticing the fact that for the weight function $w_{|\beta|-\ell}(t,\xi)$ defined in \eqref{weight},
\begin{equation}\label{macro}
\left\|w_{|\beta|-\ell}\partial^{\alpha}_{\beta}{\bf P}u\right\|^2\sim \left\|\partial^{\alpha}{\bf P}u\right\|^2
\end{equation}
provided that the parameter $q>0$ in \eqref{weight} is chosen sufficiently small, thus we only need to  deduce the temporal decay estimates on the microscopic part $u_2={\bf \{I-P\}}u$. For this purpose, by applying ${\bf I-P}$ to the first equation of \eqref{u}, we can get that the time evolution of $u_2\equiv{\bf \{I-P\}}u$ can be described by
\begin{equation}\label{u2}
  \partial_tu_2+\xi\cdot\nabla_x u_2+\nabla_x\phi\cdot\nabla_{\xi}u_2-\frac{1}{2}\xi\cdot\nabla_x\phi u_2-{\bf L}u=\Gamma(u,u)+[[{\bf P},\tau_{\phi}]]u,
\end{equation}
where $[[{\bf P},\tau_{\phi}]]={\bf P}\tau_{\phi}-\tau_{\phi}{\bf P}$ denotes the commutator of two operators ${\bf P}$ and $\tau_{\phi}$ with $\tau_{\phi}$ being given by
\begin{equation*}
\tau_{\phi}=\xi\cdot\nabla_x+\nabla_x\phi\cdot\nabla_{\xi}-\frac{1}{2}\xi \cdot\nabla_x\phi.
\end{equation*}
For this commutator, we have the following estimate
\begin{lemma}\label{comm.-est.}
  For any $|\alpha|+|\beta|\leq N-1$, we have
  \begin{equation*}
    \left\|\nu^{-\frac{1}{2}}w_{|\beta|-\ell}\partial^{\alpha}_{\beta}
    \left[[{\bf P},\tau_{\phi}]\right]u(t)\right\|^2
    \lesssim(1+t)^{-\left(|\alpha|+\frac{5}{2}\right)}\CE^2_{\infty}(t)
    +\left\|\nabla_x^{|\alpha|+1}u\right\|_{\nu}^2.
  \end{equation*}
\end{lemma}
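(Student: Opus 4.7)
The plan is to expand the double commutator $[[{\bf P},\tau_\phi]]u$ into three pieces reflecting the structure of $\tau_\phi$ and to exploit the finite-rank nature (in $\xi$) of the auxiliary single commutators with ${\bf P}$. Since ${\bf P}$ projects onto the finite-dimensional space $\mathcal{N}$, commutes with $\nabla_x$ and with multiplication by $(t,x)$-functions such as $\nabla_x\phi$, I would first rewrite
\begin{equation*}
[[{\bf P},\tau_\phi]]u=\sum_k[{\bf P},\xi_k]\partial_{x_k}u+\nabla_x\phi\cdot[{\bf P},\nabla_\xi]u-\tfrac12\,\nabla_x\phi\cdot[{\bf P},\xi]u,
\end{equation*}
where each of $[{\bf P},\xi_k]$, $[{\bf P},\xi]$, $[{\bf P},\nabla_\xi]$ is finite-rank in $\xi$: its image is spanned by polynomials in $\xi$ times ${\bf M}^{1/2}(\xi)$, and its scalar coefficients are moments of the argument against fixed polynomial-$\cdot{\bf M}^{1/2}(\eta)$ profiles (computed from $\langle {\bf P}(\xi g),\cdot\rangle$ and, for $[{\bf P},\nabla_\xi]$, by integrating by parts in $\eta$). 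Consequently $\partial^\alpha_\beta$ applied to any piece is a finite sum of products of the form $\partial^{\alpha_1}\nabla_x\phi(t,x)\cdot M(\partial^{\alpha_2}u)(t,x)\cdot \partial_\beta\phi_j(\xi)$ with $\alpha_1+\alpha_2=\alpha$, where $M(\cdot)$ denotes a moment against a polynomial-$\cdot{\bf M}^{1/2}$ profile and $\phi_j(\xi)$ is polynomial-$\cdot{\bf M}^{1/2}(\xi)$; in the first piece $\nabla_x\phi$ is absent and the moment is of $\partial^{\alpha+1}_xu$.

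For $q>0$ small, $\nu^{-1/2}(\xi)w_{|\beta|-\ell}(t,\xi)|\partial_\beta\phi_j(\xi)|$ lies in $L^2_\xi$ uniformly in $t$, since the exponential factor of $w_{|\beta|-\ell}$ is dominated by ${\bf M}^{1/2}(\xi)$. Thus the weighted $L^2_{x,\xi}$ norm of each product reduces to a bound on $\|\partial^{\alpha_1}\nabla_x\phi\cdot M(\partial^{\alpha_2}u)\|_{L^2_x}^2$. For the first, linear, piece, a second Cauchy--Schwarz in $\eta$ gives $|M(\partial^{\alpha+1}_xu)(t,x)|^2\lesssim\|\partial^{\alpha+1}_xu(t,x,\cdot)\|_\nu^2$ pointwise in $x$, which integrates to $\|\nabla_x^{|\alpha|+1}u\|_\nu^2$, exactly the dissipation-like term on the right. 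For the two nonlinear pieces, I would use $|M(\partial^{\alpha_2}u)(t,x)|\lesssim\|w_{-\ell}\partial^{\alpha_2}u(t,x,\cdot)\|_{L^2_\xi}$ pointwise and split the $\alpha_1+\alpha_2=\alpha$ sum into $|\alpha_1|\leq|\alpha|/2$ and $|\alpha_1|>|\alpha|/2$, placing the factor with smaller derivative order in $L^\infty_x$ via the three-dimensional Gagliardo--Nirenberg inequality $\|f\|_{L^\infty_x}^2\lesssim\|\nabla_x^2f\|^{3/2}\|f\|^{1/2}$. Combining with the decay estimates \eqref{lower_decay_rates}, the two temporal exponents will combine to $-(|\alpha_1|+2)-(|\alpha_2|+1/2)=-(|\alpha|+5/2)$ in both subcases, giving the nonlinear contribution $(1+t)^{-(|\alpha|+5/2)}\CE_\infty^2(t)$.

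The hardest part is the nonlinear step: I must verify that the Gagliardo--Nirenberg indices needed to put either factor in $L^\infty_x$ remain within the range of derivatives controlled by $\CE_\infty$, which is where $|\alpha|\leq N-1$ and $N\geq 4$ enter, and confirm that the two decay rates combine to deliver the extra $(1+t)^{-3/2}$ improvement over the naive $L^2\times L^2$ product $(1+t)^{-(|\alpha|+1)}$. This extra gain, essentially inherited from the fast decay of the electrostatic field $\nabla_x\phi$, is precisely what makes the commutator absorbable as a source term in the subsequent lower-order weighted energy estimates for the microscopic equation \eqref{u2}.
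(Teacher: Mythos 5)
Your argument is correct and is essentially the paper's own: the paper's one-line proof invokes \eqref{macro} together with Lemma \ref{non-weight}, which is exactly the content of your expansion — the commutator is finite-rank in $\xi$ with polynomial-times-${\bf M}^{1/2}$ profiles, so the weight $\nu^{-1/2}w_{|\beta|-\ell}$ and the $\partial_\beta$ derivatives are harmless for small $q$, the transport piece produces moments of $\nabla_x^{|\alpha|+1}u$ bounded by $\left\|\nabla_x^{|\alpha|+1}u\right\|_\nu^2$, and the field-dependent pieces are quadratic products controlled by the decay encoded in $\CE_\infty(t)$. Your Gagliardo--Nirenberg bookkeeping (rate $(1+t)^{-(|\alpha_1|+2)}$ for the factor placed in $L^\infty_x$ and $(1+t)^{-(|\alpha_2|+\frac12)}$ for the $L^2$ factor) indeed combines to the stated $(1+t)^{-(|\alpha|+\frac52)}$ and stays within the derivative range controlled by $\CE_\infty$ when $|\alpha|\leq N-1$ and $N\geq4$.
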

\begin{proof} From (\ref{macro}), we can deduce Lemma \ref{comm.-est.} immediately by employing Lemma \ref{non-weight}.
\end{proof}
We now deduce the desired weighted energy type estimates on $u_2$. Before this, we need the estimate of $u_2$ without weight.  To this end, we multiply \eqref{u2} by $u_2$ and integrate the resulting identity over $\R^3_{x}\times\R^3_{\xi}$ to yield
\begin{equation}\label{0-u2}
 \begin{split}
  \frac{1}{2}\frac{d}{dt}\|u_2(t)\|^2+\eta_0\|u_2\|^2_{\nu}
  \lesssim& (1+t)^{-\frac{3}{2}}\CE^2_{\infty}(t)
  +\left\|\nabla_x\phi\right\|_{L^{\infty}}^{\frac{2-\gamma}{1-\gamma}}\left\|\langle\xi\rangle u_2\right\|^2
  +|(\left[[{\bf P},\tau_{\phi}]\right]u(t),u_2)|\\
  \lesssim&(1+t)^{-\frac{3}{2}}\left(\epsilon_0+\CE^2_{\infty}(t)\right)
  +\left\|\nabla_x\phi\right\|_{L^{\infty}}^{\frac{2-\gamma}{1-\gamma}}\left\|\langle\xi\rangle u_2\right\|^2.
 \end{split}
\end{equation}
Here we have used \eqref{coercive}, Lemma \ref{Gamma}, Lemma \ref{comm.-est.}, Lemma\ref{non-weight} and the following facts for any sufficiently small $\eta>0$,
\begin{eqnarray}
\langle\xi\rangle=\left(\langle\xi\rangle^2\right)^{\frac{1-\gamma}{2-\gamma}} \nu(\xi)^{\frac{1}{2-\gamma}} \leq \eta\nu(\xi)+C_{\eta}\langle\xi\rangle^2, \label{basic1}\\
  \left(\tfrac{1}{2}\xi\cdot\nabla_x\phi u_2,u_2\right)\lesssim \eta\|u_2\|_{\nu}^2
  +C_{\eta}\|\nabla_x\phi\|_{L^{\infty}}^{\frac{2-\gamma}{1-\gamma}}\left\|\langle\xi\rangle u_2\right\|^2. \nonumber\\
  |(\left[[{\bf P},\tau_{\phi}]\right]u(t),u_2)|\lesssim \eta\|u_2\|_{\nu}^2+C_{\eta}\|\nu^{-\frac{1}{2}}
  \left[[{\bf P},\tau_{\phi}]\right]u(t)\|^2.\nonumber
\end{eqnarray}

Similarly, from Lemma \ref{L-K}, Lemma \ref{Gamma}, Lemma \ref{non-weight}, and Lemma \ref{comm.-est.}, we multiply \eqref{u2} by $w_{-\ell}^2(t,\xi)u_2$ and integrate the final result with respect to $x$ and $\xi$ over $\R^3_{x}\times\R^3_{\xi}$ to deduce that
\begin{equation}\label{0-w-u2}
 \begin{split}
   &\frac{1}{2}\frac{d}{dt}\left\|w_{-\ell}u_2(t)\right\|^2+2q\vartheta(1+t)^{-1-\vartheta}
   \left\|\langle\xi\rangle w_{-\ell} u_2\right\|^2+\frac{1}{2}\left\|w_{-\ell}u_2\right\|^2_{\nu}\\
   \lesssim&\|u_2\|_{\nu}^2+(1+t)^{-\frac{3}{2}}\CE^2_{\infty}(t)
   +\left\|\nabla_x\phi\right\|_{L^{\infty}}^{\frac{2-\gamma}{1-\gamma}}\left\|\langle\xi\rangle w_{-\ell} u_2\right\|^2,
 \end{split}
\end{equation}
Here we have taken $\eta=\frac{1}{2}$ in (\ref{L}) of Lemma 2.1 and used the fact that
\begin{equation*}
  \left(\frac{d}{dt}u_2(t),w_{-\ell}^2(t,\xi)u_2(t)\right)=\frac{1}{2}\frac{d}{dt}\left\|w_{-\ell}u_2(t)\right\|^2
  +\frac{2\vartheta q}{(1+t)^{1+\vartheta}}\left\|\langle\xi\rangle w_{-\ell}u_2(t)\right\|^2.
\end{equation*}
 Choosing $M>0$ suitably large, we take $M\times\eqref{0-u2}+\eqref{0-w-u2}$ to have
\begin{equation}\label{0-u2-cover}
 \begin{split}
   &\frac{d}{dt}\left(M\left\|u_2(t)\right\|^2+\left\|w_{-\ell}u_2(t)\right\|^2\right)+\eta_0\left((1+t)^{-1-\vartheta}
   \left\|\langle\xi\rangle w_{-\ell} u_2\right\|^2+\left\|w_{-\ell}u_2\right\|^2_{\nu}\right)\\
   \lesssim&(1+t)^{-\frac{3}{2}}\left(\epsilon_0+\CE^2_{\infty}(t)\right).
 \end{split}
\end{equation}
Here $\eta_0$ is some constant and we have used the following estimate:
\begin{equation*}
  \left\|\nabla_x\phi\right\|_{L^{\infty}}^{\frac{2-\gamma}{1-\gamma}}\lesssim (1+t)^{-1\times\frac{2-\gamma}{1-\gamma}}\CE_{\infty}^{\frac{1}{2}\times\frac{2-\gamma}{1-\gamma}}(t)
  \lesssim(1+t)^{-\frac{5}{4}}\CE_{\infty}^{\frac{2-\gamma}{2(1-\gamma)}}(t),\quad \gamma\in(-3,0),
\end{equation*}
by the definition of $\CE_\infty(t)$ in \eqref{Energy}.

Integrating \eqref{0-u2-cover} over $[0,t]$ yields
\begin{equation}\label{0-decay1}
  \left\|w_{-\ell}u_2(t)\right\|^2+\int_0^t\left((1+s)^{-1-\vartheta}
   \left\|\langle\xi\rangle w_{-\ell}u_2(s)\right\|^2+\left\|w_{-\ell}u_2(s)\right\|^2_{\nu}\right)ds
   \lesssim \epsilon_0+\CE^2_{\infty}(t).
\end{equation}
Now multiplying \eqref{0-u2-cover} by $(1+t)^{\frac{1}{2}+\vartheta}$, we can get from Lemma \ref{in.} with $m=-\frac{1}{2}$ that
\begin{eqnarray*}
   &&\frac{d}{dt}\left((1+t)^{\frac{1}{2}+\vartheta}\left(M\left\|u_2(t)\right\|^2
   +\left\|w_{-\ell}u_2(t)\right\|^2\right)\right)\\
   &&+(1+t)^{-\frac{1}{2}}\left\|\langle\xi\rangle w_{-\ell} u_2\right\|^2
   +\lambda(1+t)^{\frac{1}{2}+\vartheta}\left\|w_{-\ell}u_2\right\|^2_{\nu}\\
   &\lesssim&(1+t)^{-\frac{1}{2}+\vartheta}\left\|w_{-\ell}u_2(t)\right\|^2
   +(1+t)^{-1+\vartheta}\left(\epsilon_0+\CE^2_{\infty}(t)\right)\\
   &\lesssim&\eta\left((1+t)^{-\frac{1}{2}}\left\|\langle\xi\rangle w_{-\ell} u_2\right\|^2
   +(1+t)^{\frac{1}{2}+\vartheta}\left\|w_{-\ell}u_2\right\|^2_{\nu}\right)+C_{\eta}\left\|w_{-\ell}u_2\right\|^2_{\nu}\\
   &&+(1+t)^{-1+\vartheta}\CE^2_{\infty}(t).
 \end{eqnarray*}
Taking $\eta>0$ small enough and integrating above inequality over $[0,t]$ respect to $t$, we can deduce from \eqref{0-decay1} that
\begin{equation}\label{0-order}
 \begin{split}
   (1+t)^{\frac{1}{2}+\vartheta}\left\|w_{-\ell}u_2(t)\right\|^2
   +\int_0^t\left((1+s)^{-\frac{1}{2}}\left\|\langle\xi\rangle w_{-\ell}u_2(s)\right\|^2
   \right.&\left.+(1+s)^{\frac{1}{2}+\vartheta}\left\|w_{-\ell}u_2(s)\right\|^2_{\nu}\right)ds\\
   \lesssim&(1+t)^{\vartheta}\left(\epsilon_0+\CE^2_{\infty}(t)\right),
 \end{split}
\end{equation}
which is the desired weighted energy type estimates on $u_2$ itself.

To derive the weighted energy type estimates on the derivatives of $u_2$ with respect to $x-$variables, as before, we first perform the corresponding energy type estimates without weight. For this purpose, we apply $\partial^{\alpha} (1\leq|\alpha|\leq N-1)$ to the equation \eqref{u2} to get
\begin{eqnarray}\label{alpha-equ.}
  &&\partial_t\partial^{\alpha}u_2+\xi\cdot\nabla_x\partial^{\alpha}u_2
  +\nabla_x\phi\cdot\nabla_{\xi}\partial^{\alpha}u_2+\sum_{|\alpha_1|<|\alpha|}
  C_{\alpha}^{\alpha_1}\partial^{\alpha-\alpha_1}\nabla_x\phi\cdot\nabla_{\xi}\partial^{\alpha_1}u_2\nonumber\\
  &&-\frac{1}{2}\xi\cdot\nabla_x\phi\partial^{\alpha}u_2-\frac{1}{2}\sum_{|\alpha_1|<|\alpha|}
  C_{\alpha}^{\alpha_1}\xi\cdot\partial^{\alpha-\alpha_1}\nabla_x\phi\partial^{\alpha_1}u_2
  -{\bf L}\partial^{\alpha}u_2\\
  &=&\partial^{\alpha}\Gamma(u,u)+\partial^{\alpha}[[{\bf P},\tau_{\phi}]]u,\nonumber
 \end{eqnarray}
then one can get by multiplying \eqref{alpha-equ.} by $\partial^{\alpha}u_2$ and integrating the resulting identity with respect to $x$ and $\xi$ over $\R^3_{x}\times\R^3_{\xi}$ that for $1\leq|\alpha|\leq N-1$
\begin{equation}\label{x-u2}
  \frac{d}{dt}\left\|\partial^{\alpha}u_2(t)\right\|^2+\lambda\left\|\partial^{\alpha}u_2\right\|^2_{\nu}\lesssim (1+t)^{-|\alpha|-\frac{5}{2}}\CE^2_{\infty}(t)+\left\|\partial^{\alpha}\nabla_xu\right\|^2_{\nu}
  +\left\|\nabla_x\phi\right\|_{L^{\infty}}^{\frac{2-\gamma}{1-\gamma}}
  \left\|\langle\xi\rangle\partial^{\alpha}u_2\right\|^2.
\end{equation}
 Here we have used \eqref{coercive}, Lemma \ref{Gamma}, Lemma \ref{comm.-est.}, the definition \eqref{Energy} of $\CE_{\infty}(t)$, and the following two estimates:
\begin{equation*}
 \begin{split}
  &\sum_{\alpha_1<\alpha}C_{\alpha}^{\alpha_1}\left(\partial^{\alpha-\alpha_1}\nabla_x\phi
  \cdot\nabla_{\xi}\partial^{\alpha_1}u_2,\partial^{\alpha}u_2\right)\\
   \lesssim&\eta\left\|\partial^{\alpha}u_2\right\|_{\nu}^2+C_{\eta}\left\|\partial^{\alpha}\nabla_x\phi\right\|^2
  \left\|\langle\xi\rangle^{-\gamma/2}\nabla_{\xi}u_2\right\|_{L_x^{\infty}(L^2_{\xi})}^2\\
  &+C_{\eta}\sum_{1\leq|\alpha_1|\leq|\alpha|-2}\chi_{|\alpha|\geq3}
  \left\|\partial^{\alpha-\alpha_1}\nabla_x\phi\right\|_{L_x^{3}(L^2_{\xi})}^2
  \left\|\langle\xi\rangle^{-\gamma/2}\nabla_{\xi}\partial^{\alpha_1}u_2\right\|_{L_x^{6}(L^2_{\xi})}^2\\
  &+C_{\eta}\sum_{|\alpha_1|=|\alpha|-1}\chi_{|\alpha|\geq2}
  \left\|\partial^{\alpha-\alpha_1}\nabla_x\phi\right\|_{L^{\infty}}^2
  \left\|\langle\xi\rangle^{-\gamma/2}\nabla_{\xi}\partial^{\alpha_1}u_2\right\|^2\\
  \lesssim&\eta\left\|\partial^{\alpha}u_2\right\|_{\nu}^2+C_{\eta}(1+t)^{-|\alpha|}
  \left\|\langle\xi\rangle^{-\gamma/2}\nabla_{\xi}\nabla_xu_2\right\|
  \left\|\langle\xi\rangle^{-\gamma/2}\nabla_{\xi}\nabla^2_xu_2\right\|\\
 &+C_{\eta}\sum_{1\leq|\alpha_1|\leq|\alpha|-2}(1+t)^{-(|\alpha-\alpha_1|+1)}
 \left\|\langle\xi\rangle^{-\gamma/2}\partial^{\alpha_1}\nabla_{\xi}\nabla_x u_2\right\|^2\\
 &+C_{\eta}\sum_{|\alpha_1|=|\alpha|-1}(1+t)^{-(|\alpha-\alpha_1|+2)}
 \left\|\langle\xi\rangle^{-\gamma/2}\nabla_{\xi}\partial^{\alpha_1}u_2\right\|^2\\
  \lesssim&\eta\left\|\partial^{\alpha}u_2\right\|_{\nu}^2+C_{\eta}(1+t)^{-|\alpha|-\frac{5}{2}}\CE^2_{\infty}(t)
\end{split}
\end{equation*}
and
\begin{equation*}
 \begin{split}
  &\frac{1}{2}\sum_{|\alpha_1|<|\alpha|}C_{\alpha}^{\alpha_1}\left(\xi
  \cdot\partial^{\alpha-\alpha_1}\nabla_x\phi\partial^{\alpha_1}u_2,\partial^{\alpha}u_2\right)\\
  \lesssim&\eta\left\|\partial^{\alpha}u_2\right\|_{\nu}^2+C_{\eta}\sum_{|\alpha_1|=|\alpha|-1}
  \left\|\partial^{\alpha-\alpha_1}\nabla_x\phi\right\|_{L^{\infty}}^2
  \left\|\langle\xi\rangle^{1-\gamma/2}\partial^{\alpha_1}u_2\right\|^2\\
  &+C_{\eta}\sum_{1\leq|\alpha_1|\leq|\alpha|-2}
  \left\|\partial^{\alpha-\alpha_1}\nabla_x\phi\right\|_{L_x^{3}(L^2_{\xi})}^2
  \left\|\langle\xi\rangle^{-\gamma/2}\partial^{\alpha_1}u_2\right\|_{L_x^{6}(L^2_{\xi})}^2\\
  &+C_{\eta}\left\|\partial^{\alpha}\nabla_x\phi\right\|^2
  \left\|\langle\xi\rangle^{-\gamma/2}u_2\right\|_{L_x^{\infty}(L^2_{\xi})}^2\\
  \lesssim&\eta\left\|\partial^{\alpha}u_2\right\|_{\nu}^2+(1+t)^{-|\alpha|-\frac{5}{2}}\CE^2_{\infty}(t),
 \end{split}
\end{equation*}
which follows from the definition of $\CE_{\infty}(t)$ and the fact $\langle\xi\rangle^{1-\gamma/2}\leq w_{1-\ell}(t,\xi)$ when $\ell\geq N\geq 4$.

Now we proceed to deduce the weighted energy type estimate on $\partial^\alpha u_2 (1\leq|\alpha|\leq N-1)$. To this end, we first deal with some related terms. In fact, by employing Lemma \ref{in.}, Young's inequality, the definition of $\CE_{\infty}(t)$ in \eqref{Energy}, one can get that by \eqref{good-1} and \eqref{good-3} in Lemma \ref{non-LINEAR} that
\begin{equation*}
 \begin{split}
  &\left|\left(\sum_{|\alpha_1|<|\alpha|}C_{\alpha}^{\alpha_1}\partial^{\alpha-\alpha_1}\nabla_x\phi
  \cdot\nabla_{\xi}\partial^{\alpha_1}u_2,w_{-\ell}^2(t,\xi)\partial^{\alpha}u_2\right)\right|\\
  \lesssim&\eta\left((1+t)^{-1-\vartheta}\left\|\langle\xi\rangle w_{-\ell}\partial^{\alpha}u_2\right\|^2
   +\left\|w_{-\ell}\partial^{\alpha}u_2\right\|^2_{\nu}\right)\\
  &+C_{\eta}\sum_{|\alpha_1|<|\alpha|}(1+t)^{-\left(|\alpha-\alpha_1|+2\right)+(1+\vartheta)\frac{1-\gamma}{2-\gamma}}
   \left\|\langle\xi\rangle w_{1-\ell}\partial^{\alpha_1}\nabla_{\xi}u_2\right\|^2\CE_{\infty}(t)
   \end{split}
 \end{equation*}
and
\begin{equation*}
  \begin{split}
  &\left|\left(\frac{1}{2}\sum_{|\alpha_1|<|\alpha|}C_{\alpha}^{\alpha_1}\xi\cdot
  \partial^{\alpha-\alpha_1}\nabla_x\phi
  \partial^{\alpha_1}u_2,w_{-\ell}^2(t,\xi)\partial^{\alpha}u_2\right)\right|\\
  \lesssim&\eta\left((1+t)^{-1-\vartheta}\left\|\langle\xi\rangle w_{-\ell}\partial^{\alpha}u_2\right\|^2
   +\left\|w_{-\ell}\partial^{\alpha}u_2\right\|_{\nu}^2\right)\\
  &+C_{\eta}\sum_{|\alpha_1|<|\alpha|}(1+t)^{-(|\alpha-\alpha_1|+2)+(1+\vartheta)\frac{1-\gamma}{2-\gamma}}
   \left\|\langle\xi\rangle^{\frac{1}{2}}w_{-\ell}\partial^{\alpha_1}u_2\right\|^2\CE_{\infty}(t).
  \end{split}
\end{equation*}
Basing on the above two estimates, as in the derivation of \eqref{0-w-u2}, if we multiply \eqref{alpha-equ.} by $w_{-\ell}^2\partial^{\alpha}u_2$, then we can get from Lemma \ref{L-K}, Lemma \ref{Gamma}, Lemma \ref{comm.-est.}, and \eqref{basic1} that for $1\leq|\alpha|\leq N-1$
\begin{eqnarray}\label{x-w-u2}
   &&\frac{1}{2}\frac{d}{dt}\left\|w_{-\ell}\partial^{\alpha}u_2(t)\right\|^2+2q\vartheta(1+t)^{-1-\vartheta}
   \left\|\langle\xi\rangle w_{-\ell}\partial^{\alpha}u_2\right\|^2
   +\frac{1}{2}\left\|w_{-\ell}\partial^{\alpha}u_2\right\|^2_{\nu}\nonumber\\
   &\lesssim&\left\|\partial^{\alpha}u_2\right\|_{\nu}^2+\left\|\partial^{\alpha}\nabla_x u_2\right\|_{\nu}^2
   +(1+t)^{-\frac{3}{2}-|\alpha|}\CE^2_{\infty}(t)
   +\left\|\nabla_x\phi\right\|_{L^{\infty}}^{\frac{2-\gamma}{1-\gamma}}
   \left\|\langle\xi\rangle w_{-\ell}\partial^{\alpha}u_2\right\|^2\\
   &&+\sum_{|\alpha_1|<|\alpha|}(1+t)^{-(|\alpha-\alpha_1|+2)+(1+\vartheta)\frac{1-\gamma}{2-\gamma}}
   \left\|\langle\xi\rangle w_{1-\ell}\partial^{\alpha_1}\nabla_{\xi}u_2\right\|^2\CE_{\infty}(t).\nonumber
 \end{eqnarray}

Choosing $M_2>0$ suitably large, we take $M\times\eqref{x-u2}+\eqref{x-w-u2}$ to yield that for certain constant $\lambda>0$,
\begin{eqnarray}\label{x-u2-cover}
   &&\frac{d}{dt}\left(M\left\|\partial^{\alpha}u_2(t)\right\|^2
   +\left\|w_{-\ell}\partial^{\alpha}u_2(t)\right\|^2\right)
   +\lambda\left((1+t)^{-1-\vartheta}\left\|\langle\xi\rangle w_{-\ell}\partial^{\alpha}u_2\right\|^2
   +\left\|w_{-\ell}\partial^{\alpha}u_2\right\|^2_{\nu}\right)\nonumber\\
   &\lesssim&\sum_{|\alpha_1|<|\alpha|}(1+t)^{-(|\alpha-\alpha_1|+2)+(1+\vartheta)\frac{1-\gamma}{2-\gamma}}
   \left(\left\|\langle\xi\rangle w_{-\ell}\partial^{\alpha_1}\nabla_{\xi}u_2\right\|^2
   +\left\|\langle\xi\rangle^{\frac{1}{2}}w_{-\ell}\partial^{\alpha_1}u_2\right\|^2\right)\CE_{\infty}(t)\\
   &&+(1+t)^{-|\alpha|-\frac{5}{2}+\frac{2}{3}}\CE^2_{\infty}(t)
   +\left\|\partial^{\alpha}\nabla_xu\right\|^2_{\nu}, \quad 1\leq|\alpha|\leq N-1.\nonumber
\end{eqnarray}
\begin{remark}\label{u2-(N-1)}
We want to emphasize that the above estimate \eqref{x-u2-cover} holds for $|\alpha|=N-1$, such a fact plays an important role in our analysis.
\end{remark}

For the mixed derivatives of $u_2$ with respect to the spatial variable $x$ and the velocity variable $\xi$, we can get by applying $\partial^{\alpha}_{\beta}$ with $|\alpha|+|\beta|\leq N,~|\beta|\geq1$ to \eqref{u2} that
\begin{eqnarray}\label{mixed-equ.}
 &&\partial_t\partial^{\alpha}_{\beta}u_2+\xi\cdot\nabla_x\partial^{\alpha}_{\beta}u_2
  +\nabla_x\phi\cdot\nabla_{\xi}\partial^{\alpha}_{\beta}u_2
  -\frac{1}{2}\xi\cdot\nabla_x\phi\partial^{\alpha}_{\beta}u_2\nonumber\\
  &&+\sum_{|\beta_1|=1}C_{\beta}^{\beta_1}
  \partial_{\beta_1}\xi\cdot\nabla_{x}\partial^{\alpha}_{\beta-\beta_1}u_2
  +\sum_{|\alpha_1|<|\alpha|}C_{\alpha}^{\alpha_1}\partial^{\alpha-\alpha_1}\nabla_x\phi\cdot
  \nabla_{\xi}\partial^{\alpha_1}_{\beta}u_2\\
  &&-\frac{1}{2}\sum_{|\beta_1|=1}C_{\beta}^{\beta_1}
  \partial_{\beta_1}\xi\cdot\partial^{\alpha}\left(\nabla_x\phi\partial_{\beta-\beta_1}u_2\right)
  -\frac{1}{2}\sum_{|\alpha_1|<|\alpha|}C_{\alpha}^{\alpha_1}\xi\cdot\partial^{\alpha-\alpha_1}
  \nabla_x\phi\partial^{\alpha_1}_{\beta}u_2\nonumber\\
  &=&\partial_{\beta}{\bf L}\partial^{\alpha}u_2
  +\partial^{\alpha}_{\beta}\Gamma(u,u)+\partial^{\alpha}_{\beta}[[{\bf P},\tau_{\phi}]]u.\nonumber
\end{eqnarray}
Multiplying \eqref{mixed-equ.} by $w_{|\beta|-\ell}^2\partial^{\alpha}_{\beta}u_2$ and integrating the resulting identity with respect to $x$ and $\xi$ over $\R^3_{x}\times\R^3_{\xi}$, we can deduce from
\begin{equation*}
  \left(\partial^\alpha_\beta u_2,w^2_{|\beta|-\ell}\nabla_x\partial^\alpha_\beta u_2\right)=0
\end{equation*}
and
\begin{equation*}
\left(\partial^\alpha_\beta u_2, w^2_{|\beta|-\ell}\partial_t\partial^\alpha_\beta u_2\right)=\frac{1}{2} \frac{d}{dt}\left\|w_{|\beta|-\ell}\partial^{\alpha}_{\beta}u_2(t)\right\|^2+\frac{2q\vartheta}{(1+t)^{1+\vartheta}}
\left\|\langle\xi\rangle w_{|\beta|-\ell}\partial^{\alpha}_{\beta}u_2\right\|^2
\end{equation*}
that
\begin{equation}\label{mixed-w-u2-draft}
 \begin{split}
   &\frac{1}{2}\frac{d}{dt}\left\|w_{|\beta|-\ell}\partial^{\alpha}_{\beta}u_2(t)\right\|^2
   +\frac{2q\vartheta}{(1+t)^{1+\vartheta}}
   \left\|\langle\xi\rangle w_{|\beta|-\ell}\partial^{\alpha}_{\beta}u_2\right\|^2
   =\underbrace{-\left(\partial^\alpha_\beta u_2,w^2_{|\beta|-\ell} \nabla_x\phi\cdot\nabla_{\xi}\partial^{\alpha}_{\beta}u_2\right)}_{J_1}\\
   &+\underbrace{\frac 12\left(\partial^\alpha_\beta u_2,w^2_{|\beta|-\ell}\xi\cdot\nabla_x\phi\partial^{\alpha}_{\beta}u_2\right)}_{J_2}
   -\underbrace{\sum_{|\beta_1|=1}C_{\beta}^{\beta_1}\left(\partial^\alpha_\beta u_2,w^2_{|\beta|-\ell} \partial_{\beta_1}\xi\cdot\nabla_{x}\partial^{\alpha}_{\beta-\beta_1}u_2\right)}_{J_3}\\
   &-\underbrace{\sum_{|\alpha_1|<|\alpha|}C_{\alpha}^{\alpha_1}\chi_{|\alpha|\geq1}\left(\partial^\alpha_\beta u_2,w^2_{|\beta|-\ell} \partial^{\alpha-\alpha_1}\nabla_x\phi\cdot
  \nabla_{\xi}\partial^{\alpha_1}_{\beta}u_2\right)}_{J_4}\\
  &+\underbrace{\frac 12\sum_{|\beta_1|=1}C_{\beta}^{\beta_1}\left(\partial^\alpha_\beta u_2,w^2_{|\beta|-\ell} \partial_{\beta_1}\xi\cdot\partial^{\alpha}\left(\nabla_x\phi\partial_{\beta-\beta_1}u_2\right)\right)}_{J_5}\\
  &+\underbrace{\frac 12\sum_{|\alpha_1|<|\alpha|}\chi_{|\alpha|\geq1}C_{\alpha}^{\alpha_1}\left(\partial^\alpha_\beta u_2,w^2_{|\beta|-\ell} \xi\cdot\partial^{\alpha-\alpha_1}
  \nabla_x\phi\partial^{\alpha_1}_{\beta}u_2\right)}_{J_6}\\
  &+\underbrace{\left(\partial^\alpha_\beta u_2,w^2_{|\beta|-\ell} \partial_{\beta}{\bf L}\partial^{\alpha}u_2\right)}_{J_7}
  +\underbrace{\left(\partial^\alpha_\beta u_2,w^2_{|\beta|-\ell} \partial^{\alpha}_{\beta}\Gamma(u,u)\right)}_{J_8}
  +\underbrace{\left(\partial^\alpha_\beta u_2,w^2_{|\beta|-\ell} \partial^{\alpha}_{\beta}
  [[{\bf P},\tau_{\phi}]]u\right)}_{J_9}.
 \end{split}
\end{equation}
Now we estimate $J_i$ $(i=1,2,\cdots,9)$ term by term. For $J_1$ and $J_2$, noticing
\begin{equation*}
  \left|\nabla_\xi w^2_{|\beta|-\ell}(t,\xi)\right|\lesssim \langle\xi\rangle w^2_{|\beta|-\ell}(t,\xi),\quad
 \langle\xi\rangle=\left(\langle\xi\rangle^2\right)^{\frac{1-\gamma}{2-\gamma}} \nu(\xi)^{\frac{1}{2-\gamma}},
\end{equation*}
we use Young's inequality to deduce that
\begin{eqnarray*}
|J_1|+|J_2|&\lesssim&\left|\left(\nabla_x\phi, \langle\xi\rangle w^2_{|\beta|-\ell}\left|\partial^\alpha_\beta u_2\right|^2\right)\right|\\
&\lesssim&\eta \left\|w_{|\beta|-\ell}\partial^\alpha_\beta u_2\right\|^2_\nu+C_\eta\left\|\nabla_x\phi(t)\right\|^{\frac{2-\gamma}{1-\gamma}}_{L^\infty(\R^3_x)}
 \left\|\langle\xi\rangle w_{|\beta|-\ell}\partial^\alpha_\beta u_2\right\|^2\\
&\lesssim&\eta \left\|w_{|\beta|-\ell}\partial^\alpha_\beta u_2\right\|^2_\nu+(1+t)^{-\frac{5}{4}}\CE_{\infty}^{\frac{2-\gamma}{2(1-\gamma)}}(t)\left\|\langle\xi\rangle w_{|\beta|-\ell}\partial^\alpha_\beta u_2\right\|^2.
\end{eqnarray*}
Here and below $\eta>0$ can be any sufficiently small positive constant. It is worth to pointing out that the first term in the right hand side of the estimate on $|J_1|+|J_2|$ stated above can be absorbed by the dissipation term induced by the coercive estimate of the linearized collision operator ${\bf L}$, cf. the first term in the right hand side of the estimate on $J_7$, while the corresponding second term can be absorbed by the second term in the left hand side of \eqref{mixed-w-u2-draft} since $\vartheta$ is a positive constant chosen to satisfy $0<\vartheta\leq \frac 14$ and the temporal energy functional $\mathcal{E}_\infty(t)$ is chosen suitably small.

For $J_3$, noticing $w^2_{|\beta|-\ell}=w_{|\beta-\beta_1|-\ell}w_{|\beta|-\ell}\langle\xi\rangle^{\frac\gamma 2}$ for $|\beta_1|=1$, we can get from Cauchy's inequality that
\begin{eqnarray*}
  J_3&\lesssim&\sum_{|\beta_1|=1}\left|\left(\partial_{\beta_1}\xi\cdot\nabla_{x}\partial^{\alpha}_{\beta-\beta_1}u_2,
  w_{|\beta|-\ell}^2\partial^{\alpha}_{\beta}u_2\right)\right|\\
 & =&\sum_{|\beta_1|=1}\left|\left(w_{|\beta-\beta_1|-\ell}\partial_{\beta_1}\xi\cdot
  \nabla_{x}\partial^{\alpha}_{\beta-\beta_1}u_2\langle\xi\rangle^{\frac{\gamma}{2}},
  w_{|\beta|-\ell}\partial^{\alpha}_{\beta}u_2\right)\right|\\
  &\lesssim&\eta\left\|w_{|\beta|-\ell}\partial^{\alpha}_{\beta}u_2\right\|^2_{\nu}
  +C_{\eta}\sum_{|\beta_1|=1}\left\|w_{|\beta-\beta_1|-\ell}\partial^{\alpha+e_i}_{\beta-\beta_1}u_2\right\|^2,
 \end{eqnarray*}

Now we turn to deal with $J_5$ which can be controlled as follows
\begin{eqnarray*}
J_5&\lesssim&\sum_{|\alpha_1|\leq|\alpha|}\left|\left(\partial^{\alpha-\alpha_1}\nabla_x\phi
w_{|\beta-e_i|-\ell}\langle\xi\rangle^{\frac{\gamma}{2}}\partial^{\alpha_1}_{\beta-e_i}u_2,
w_{|\beta|-\ell}\partial^{\alpha}_{\beta}u_2\right)\right|\\
&\lesssim&\eta\left\|w_{|\beta|-\ell}\partial^{\alpha}_{\beta}u_2\right\|_{\nu}^2
+\sum_{|\alpha_1|+|\beta|\leq\frac{N}{2}}\left\|\partial^{\alpha-\alpha_1}\nabla_x\phi\right\|^2
\left\|w_{|\beta-e_i|-\ell}\partial^{\alpha_1}_{\beta-e_i}u_2\right\|_{L^{\infty}_x(L^2_{\xi})}^2\\
&&+\sum_{|\alpha_1|+|\beta|>\frac{N}{2}}
\left\|\partial^{\alpha-\alpha_1}\nabla_x\phi\right\|_{L^{\infty}}^2
\left\|w_{|\beta-e_i|-\ell}\partial^{\alpha_1}_{\beta-e_i}u_2\right\|^2\\
&\lesssim&\eta\left\|w_{|\beta|-\ell}\partial^{\alpha}_{\beta}u_2\right\|_{\nu}^2
+(1+t)^{-|\alpha|-\frac{5}{2}}\CE^2_{\infty}(t).
 \end{eqnarray*}

Noticing that $J_4$ and $J_6$ have been estimated in Lemma \ref{non-LINEAR} which tell us that they can be controlled by the right hand sides of \eqref{good-4} and \eqref{good-6}, we only need to control the terms $J_7, J_8$, and $J_9$ suitably. For this purpose, we can get by exploiting Lemma \ref{L}, Lemma \ref{Gamma}, and Lemma \ref{comm.-est.} that
\begin{equation*}
 J_7 \lesssim -\left\|w_{|\beta|-\ell}\partial^{\alpha}_{\beta}u_2\right\|_{\nu}^2
 +\eta\sum_{|\beta_1|\leq|\beta|}\left\|w_{|\beta_1|-\ell}\partial^{\alpha}_{\beta_1}u_2\right\|_{\nu}^2
 +C_{\eta}\left\|\partial^{\alpha}u_2\right\|_{\nu}^2,
\end{equation*}
\begin{equation*}
 J_8\lesssim \eta\left\|w_{|\beta|-\ell}\partial^{\alpha}_{\beta}u_2\right\|_{\nu}^2
 +C_{\eta}(1+t)^{-|\alpha|-\frac{3}{2}}\CE_{\infty}^2(t),
\end{equation*}
and
\begin{equation*}
 J_9\lesssim\eta\left\|w_{|\beta|-\ell}\partial^{\alpha}_{\beta}u_2\right\|_{\nu}^2
 +C_{\eta}(1+t)^{-|\alpha|-\frac{3}{2}}\CE_{\infty}^2(t)
 +\chi_{|\alpha|=N-1}\left\|\nabla_x^Nu\right\|^2_{\nu},
  \end{equation*}
respectively.

Substituting the above estimates on $J_i$ $(i=1,2,\cdots,9)$ into
\eqref{mixed-w-u2-draft} and choosing $\eta$ suitably small, we
finally arrive at
\begin{eqnarray}\label{mixed-w-u2}
   &&\frac{d}{dt}\left\|w_{|\beta|-\ell}\partial^{\alpha}_{\beta}u_2(t)\right\|^2+q\vartheta(1+t)^{-1-\vartheta}
   \left\|\langle\xi\rangle w_{|\beta|-\ell}\partial^{\alpha}_{\beta}u_2\right\|^2
   +\frac{1}{2}\left\|w_{|\beta|-\ell}\partial^{\alpha}_{\beta}u_2\right\|^2_{\nu}\nonumber\\
   &\lesssim&\sum_{|\beta_1|<|\beta|}\left\|w_{|\beta_1|-\ell}\partial^{\alpha}_{\beta_1}u_2\right\|^2_{\nu}
   +\sum_{|\beta_1|=1}\left\|w_{|\beta-\beta_1|-\ell}\partial^{\alpha+e_i}_{\beta-\beta_1}u_2\right\|^2\\
   &&+\sum_{|\alpha_1|<|\alpha|}\chi_{|\alpha|\geq1}(1+t)^{-(|\alpha-\alpha_1|+2)
   +(1+\vartheta)\frac{1-\gamma}{2-\gamma}}\left(\left\|\langle\xi\rangle w_{|\beta+e_i|-\ell}
   \nabla_{\xi}\partial^{\alpha_1}_{\beta}u_2\right\|^2\right.\nonumber\\
   &&\left.+\left\|\langle\xi\rangle^{\frac{1}{2}}w_{|\beta|-\ell}\partial^{\alpha_1}_{\beta}u_2\right\|^2\right)
   \CE_{\infty}(t)
   +(1+t)^{-|\alpha|-\frac{3}{2}}\left(\epsilon_0+\CE^2_{\infty}(t)\right)
   +\chi_{|\alpha|=N-1}\left\|\nabla_x^Nu\right\|_{\nu}^2\nonumber
 \end{eqnarray}
provided that the temporal energy functional $\mathcal{E}_\infty(t)$ is chosen sufficiently small. Here $|\alpha|+|\beta|\leq N$, $|\beta|\geq 1$.

With the above estimates in hand, we now turn to deduce the main result on the lower order energy type estimates on the microscopic component $u_2(t,x,\xi)$ of the solution $u(t,x,\xi)$ to the Cauchy problem of the one-species VPB system \eqref{u}.
\begin{lemma}\label{lower_order_microscopic} Suppose that $u(t,x,\xi)$ is a solution to the Cauchy problem of the one-species VPB system \eqref{u} which is defined on $\prod_T=[0,T]\times\R_x^3\times\R_\xi^3$ and satisfies the a priori assumption \eqref{a priori assumption} for some suitably chosen small positive constant $\delta>0$, then it holds for $|\alpha|+|\beta|\leq N-2$ that
\begin{eqnarray}\label{induc.-Assu.2}
     &&(1+t)^{|\alpha|+\frac{1}{2}+\vartheta}
     \left\|w_{|\beta|-\ell}\partial^{\alpha}_{\beta}u_2\right\|^2\nonumber\\
     &&+\int_0^{t}\left((1+s)^{|\alpha|-\frac{1}{2}}
     \left\|\langle\xi\rangle w_{|\beta|-\ell}\partial^{\alpha}_{\beta}u_2(s)\right\|^2
     +(1+s)^{|\alpha|+\frac{1}{2}+\vartheta}
     \left\|w_{|\beta|-\ell}\partial^{\alpha}_{\beta}u_2(s)\right\|_{\nu}^2\right)ds\\
     &\lesssim&(1+t)^{\vartheta}\left(\epsilon_0+\CE^2_{\infty}(t)\right).\nonumber
  \end{eqnarray}
\end{lemma}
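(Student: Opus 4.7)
The plan is to prove \eqref{induc.-Assu.2} by induction on the velocity-derivative order $|\beta|$, using the closed differential inequalities \eqref{0-u2-cover}, \eqref{x-u2-cover}, and \eqref{mixed-w-u2} established earlier, the time multiplier $(1+s)^{|\alpha|+\frac{1}{2}+\vartheta}$, Lemma \ref{in.} to convert an $L^2$ norm bearing a mismatched time weight into a combination of an $\langle\xi\rangle$-weighted norm and a $\nu$-weighted norm, and Lemma \ref{non-weight} to absorb purely $x$-derivative residues.

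\textbf{Base case $|\beta|=0$.} The subcase $|\alpha|=0$ is exactly \eqref{0-order}. For $1\leq|\alpha|\leq N-2$, I multiply \eqref{x-u2-cover} by $(1+s)^{|\alpha|+\frac{1}{2}+\vartheta}$, transfer the multiplier through the time derivative at the cost of a lower-weight remainder of the form $(|\alpha|+\frac{1}{2}+\vartheta)(1+s)^{|\alpha|-\frac{1}{2}+\vartheta}\|w_{-\ell}\partial^{\alpha}u_2\|^2$, and apply Lemma \ref{in.} with $m=|\alpha|-\frac{1}{2}$ to absorb that remainder into the two time-multiplied dissipative terms $(1+s)^{|\alpha|-\frac{1}{2}}\|\langle\xi\rangle w_{-\ell}\partial^{\alpha}u_2\|^2$ and $(1+s)^{|\alpha|+\frac{1}{2}+\vartheta}\|w_{-\ell}\partial^{\alpha}u_2\|_{\nu}^2$ produced on the left of \eqref{x-u2-cover}. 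The nonlinear correction $\sum_{|\alpha_1|<|\alpha|}(1+s)^{-(|\alpha-\alpha_1|+2)+(1+\vartheta)\frac{1-\gamma}{2-\gamma}}(\cdots)\CE_\infty(s)$ is harmless because $\frac{1-\gamma}{2-\gamma}<1$ (so, after multiplication by $(1+s)^{|\alpha|+\frac{1}{2}+\vartheta}$ and invocation of the decay rates from $\CE_\infty$ on the inner norms, the resulting $(1+s)$-exponent is strictly below $-1+\vartheta$) and the a priori assumption \eqref{a priori assumption} supplies the missing smallness. The residual $\|\partial^{\alpha}\nabla_xu\|_{\nu}^2$ is bounded by Lemma \ref{non-weight} with $m$ replaced by $|\alpha|+1$, yielding $(1+s)^{-(|\alpha|+\frac{3}{2})}(\epsilon_0+\CE^2_\infty(s))$, which multiplied by $(1+s)^{|\alpha|+\frac{1}{2}+\vartheta}$ integrates to at most $(1+t)^\vartheta(\epsilon_0+\CE^2_\infty(t))$. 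Integrating in $s\in[0,t]$ closes the base case.

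\textbf{Inductive step $|\beta|\geq 1$.} Assume \eqref{induc.-Assu.2} holds for every pair $(\alpha',\beta')$ with $|\alpha'|+|\beta'|\leq N-2$ and $|\beta'|<|\beta|$, and fix $(\alpha,\beta)$ with $|\alpha|+|\beta|\leq N-2$. Multiply \eqref{mixed-w-u2} by $(1+s)^{|\alpha|+\frac{1}{2}+\vartheta}$ and split the time derivative as in the base case; Lemma \ref{in.} again absorbs the boundary remainder into the two dissipative terms on the left of \eqref{mixed-w-u2}. The right-hand side of \eqref{mixed-w-u2} decomposes into four groups: (i) $\sum_{|\beta_1|<|\beta|}\|w_{|\beta_1|-\ell}\partial^{\alpha}_{\beta_1}u_2\|_{\nu}^2$, controlled by the inductive hypothesis at strictly smaller $|\beta|$, whose integrated $\nu$-dissipation bound is part of \eqref{induc.-Assu.2}; (ii) the critical transport family $\sum_{|\beta_1|=1}\|w_{|\beta-\beta_1|-\ell}\partial^{\alpha+e_i}_{\beta-\beta_1}u_2\|^2$, whose shifted level $(|\alpha|+1,|\beta|-1)$ still satisfies $(|\alpha|+1)+(|\beta|-1)=|\alpha|+|\beta|\leq N-2$, so the inductive hypothesis yields $\|w_{|\beta|-1-\ell}\partial^{\alpha+e_i}_{\beta-e_i}u_2\|^2\lesssim(1+s)^{-|\alpha|-\frac{3}{2}}(\epsilon_0+\CE^2_\infty(s))$, and multiplied by $(1+s)^{|\alpha|+\frac{1}{2}+\vartheta}$ and integrated this gives at most $(1+t)^\vartheta(\epsilon_0+\CE^2_\infty(t))$; (iii) the nonlinear $\CE_\infty(s)$-corrections produced by Lemma \ref{non-LINEAR}, handled exactly as in the base case; (iv) the inhomogeneity $(1+s)^{-|\alpha|-\frac{3}{2}}(\epsilon_0+\CE^2_\infty(s))$, whose $s$-integral against $(1+s)^{|\alpha|+\frac{1}{2}+\vartheta}$ is of order $(1+t)^\vartheta$. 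The indicator $\chi_{|\alpha|=N-1}\|\nabla_x^Nu\|_{\nu}^2$ vanishes because $|\alpha|\leq N-2$.

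\textbf{Main obstacle.} The crux is the critical transport family in (ii), for which no Lemma \ref{L-K}-type coercivity is available since $\nu(\xi)\sim\langle\xi\rangle^\gamma$ is degenerate at infinity. The saving feature is that trading one velocity derivative for one spatial derivative costs exactly one extra power of $(1+s)^{-1}$ of decay under the assigned rates $r_{|\alpha|}=|\alpha|+\frac{1}{2}$, so that after multiplication by $(1+s)^{|\alpha|+\frac{1}{2}+\vartheta}$ the resulting integrand $(1+s)^{-1+\vartheta}$ is exactly critically integrable and reproduces the $(1+t)^\vartheta$ prefactor on the right of \eqref{induc.-Assu.2}. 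This forces the induction to be arranged so that $|\beta|$ decreases while $|\alpha|$ correspondingly increases, and the restriction $|\alpha|+|\beta|\leq N-2$ is essential to keep the shifted pair $(|\alpha|+1,|\beta|-1)$ within the inductive range.
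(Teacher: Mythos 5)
Your overall machinery is the right one (the differential inequalities \eqref{0-u2-cover}, \eqref{x-u2-cover}, \eqref{mixed-w-u2}, the time multiplier $(1+s)^{|\alpha|+\frac12+\vartheta}$, Lemma \ref{in.}, and Lemma \ref{non-weight} for the residue $\|\partial^{\alpha}\nabla_xu\|_\nu^2$), and your treatment of the critical transport family via the shifted level $(\alpha+e_i,\beta-e_i)$ and the critically integrable factor $(1+s)^{-1+\vartheta}$ is sound. The gap is in how you organize the induction and, consequently, in how you dispose of the $\CE_\infty$-prefactored correction terms. You induct on $|\beta|$ alone, but the right-hand sides of \eqref{x-u2-cover} and \eqref{mixed-w-u2} (i.e.\ the output of Lemma \ref{non-LINEAR}) contain terms of the form $(1+s)^{-(|\alpha-\alpha_1|+2)+(1+\vartheta)\frac{1-\gamma}{2-\gamma}}\left\|\langle\xi\rangle w_{|\beta+e_i|-\ell}\nabla_{\xi}\partial^{\alpha_1}_{\beta}u_2\right\|^2\CE_\infty$ (and $\langle\xi\rangle^{1/2}$-weighted analogues), which carry one \emph{more} velocity derivative than the level you are currently estimating. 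In your base case $|\beta|=0$ these $|\beta|=1$ quantities are not available from any inductive hypothesis, and in your inductive step (your group (iii)) they sit at velocity order $|\beta|+1$, again outside an induction that descends in $|\beta|$. Your proposed fix --- ``invocation of the decay rates from $\CE_\infty$ on the inner norms'' --- does not work: the functional \eqref{Energy} controls $\|w_{|\beta|-\ell}\partial^{\alpha}_{\beta}u\|^2$ with the bare weight $w_{|\beta|-\ell}$, and the extra factor $\langle\xi\rangle$ (or $\langle\xi\rangle^{1/2}$) cannot be absorbed into $w$ for general $\gamma\in(-3,0)$, so these $\langle\xi\rangle$-augmented norms admit no pointwise-in-time bound from the a priori assumption \eqref{a priori assumption}. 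This is precisely why the extra dissipation $(1+s)^{r-1}\|\langle\xi\rangle w_{|\beta|-\ell}\partial^{\alpha}_{\beta}u_2\|^2$ is built into the statement \eqref{induc.-Assu.2}: those quantities are only controlled after time integration.

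The paper's proof circumvents this by inducting on the total order $k=|\alpha|+|\beta|$ and proving the estimate for all pairs at order $k+1$ simultaneously. Terms with $|\alpha_1|<|\alpha|-1$ then fall at strictly lower total order and are handled by the induction hypothesis through its time-integrated $\langle\xi\rangle$-weighted dissipation, using the exponent inequality \eqref{key2}; the remaining same-order couplings --- both the transport shift to $(\alpha+e_i,\beta-e_i)$ (fewer velocity derivatives, treated with \eqref{key}) and the nonlinear shift to $(\alpha_1,\beta+e_i)$ (more velocity derivatives, but multiplied by the small factor $\CE_\infty(t)$) --- are absorbed in the final linear combination \eqref{part<N-1} over all $(\alpha,\beta)$ with $|\alpha|+|\beta|=k+1$, where they match the dissipation integrals present on the left-hand side. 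To repair your argument you would either have to adopt this total-order induction with the same-order linear combination, or enlarge your inductive statement so that the $\langle\xi\rangle$-weighted integrals at velocity order $|\beta|+1$ are available before they are needed; as written, the $|\beta|$-descending induction cannot supply them. (A minor additional point: Lemma \ref{in.} also produces a term $C_\eta\|w_{|\beta|-\ell}\partial^{\alpha}_{\beta}u_2\|_\nu^2$ without time weight, so you must first run the untime-weighted integrated inequality, as in \eqref{(k+1)-x} and \eqref{mixed-1}, and combine; your write-up skips this step.)
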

\begin{proof} \eqref{induc.-Assu.2} will be proved by the principle of mathematical induction. To this end, we first notice from \eqref{0-order} that \eqref{induc.-Assu.2} holds for $|\alpha|+|\beta|=0$. Now assume that \eqref{induc.-Assu.2} holds for any $|\alpha|+|\beta|=k\leq N-3$,
  to prove \eqref{induc.-Assu.2}, we only need to verify that \eqref{induc.-Assu.2} also holds for $|\alpha|+|\beta|=k+1\leq N-2$.
  For this purpose, letting $|\alpha|=k+1$ in \eqref{x-u2-cover} and integrating the result with respect to $t$ over $[0,t]$, we obtain
\begin{eqnarray}\label{(k+1)-x}
   &&\left\|w_{-\ell}\nabla_x^{k+1}u_2(t)\right\|^2+\int_0^t\left((1+s)^{-1-\vartheta}\left\|\langle\xi\rangle w_{-\ell}\nabla_x^{k+1}u_2\right\|^2+\left\|w_{-\ell}\nabla_x^{k+1}u_2\right\|^2_{\nu}\right)ds\nonumber\\
   &\lesssim&\CE_{\infty}(t)\sum_{|\alpha_1|<k+1}
   \int_0^t(1+s)^{-(|\alpha-\alpha_1|+2)+(1+\vartheta)\frac{1-\gamma}{2-\gamma}}
   \left(\left\|\langle\xi\rangle w_{-\ell}\partial^{\alpha_1}\nabla_{\xi}u_2\right\|^2
   +\left\|\langle\xi\rangle^{\frac{1}{2}}w_{-\ell}\partial^{\alpha_1}u_2\right\|^2\right)ds\\
   &&+\epsilon_0+\CE^2_{\infty}(t).\nonumber
 \end{eqnarray}
Here we have used the estimate
\begin{equation*}
  \left\|\partial^{\alpha+e_i}u\right\|^2_{\nu}\lesssim(1+t)^{-|\alpha+e_i|-\frac{1}{2}}
  \left(\epsilon_0+\CE^2_{\infty}(t)\right),
\end{equation*}
which follows from Lemma \ref{non-weight} since $|\alpha+e_i|\leq
N-1$.

On the other hand, we multiply \eqref{x-u2-cover} (with
$|\alpha|=k+1$) by $(1+t)^{k+\frac{3}{2}+\vartheta}$ and integrate the resulting inequality with respect to $t$ over $[0,t]$ to yield
\begin{eqnarray}\label{(k+1)-decay1}
   &&(1+t)^{k+\frac{3}{2}+\vartheta}\left\|w_{-\ell}\nabla_x^{k+1}u_2(t)\right\|^2\nonumber\\
   &&+\int_0^t\left((1+s)^{k+\frac{1}{2}}\left\|\langle\xi\rangle w_{-\ell}\nabla_x^{k+1}u_2\right\|^2
   +(1+s)^{k+\vartheta+\frac{3}{2}}\left\|w_{-\ell}\nabla_x^{k+1}u_2\right\|^2_{\nu}\right)ds\\
   &\lesssim&\int_0^t(1+s)^{k+\frac{1}{2}+\vartheta}\left\|w_{-\ell}\nabla_x^{k+1}u_2(s)\right\|^2ds
   +(1+t)^{\vartheta}\left(\epsilon_0+\CE^2_{\infty}(t)\right)\nonumber\\
   &&+\CE_{\infty}(t)\sum_{|\alpha_1|<|\alpha|=k+1}
   \int_0^t(1+s)^{-(|\alpha-\alpha_1|+2)+(1+\vartheta)\frac{1-\gamma}{2-\gamma}
   +|\alpha|+\frac{1}{2}+\vartheta}\left\|\langle\xi\rangle w_{-\ell}\partial^{\alpha_1}\nabla_{\xi}u_2(s)\right\|^2ds.\nonumber
 \end{eqnarray}
By Lemma \ref{in.} with $m=k+\frac 12$, the first term in the right hand side of \eqref{(k+1)-decay1} can be further bounded by
\begin{eqnarray}\label{first-term}
   &&\int_0^t(1+s)^{k+\frac{1}{2}+\vartheta}\left\|w_{-\ell}\nabla_x^{k+1}u_2(s)\right\|^2ds\nonumber\\
   &\lesssim&\eta\int_0^t\left((1+s)^{k+\frac{1}{2}}\left\|\langle\xi\rangle w_{-\ell}\nabla_x^{k+1}u_2\right\|^2
   +(1+s)^{k+\vartheta+\frac{3}{2}}\left\|w_{-\ell}\nabla_x^{k+1}u_2\right\|^2_{\nu}\right)ds\\
   &&+C_{\eta}\int_0^t\left\|w_{-\ell}\nabla_x^{k+1}u_2(t)\right\|^2_{\nu}ds
   +(1+t)^{\vartheta}\left(\epsilon_0+\CE^2_{\infty}(t)\right).\nonumber
 \end{eqnarray}
Then multiplying \eqref{(k+1)-x} by a suitable large constant and adding the
corresponding inequality and \eqref{first-term} to \eqref{(k+1)-decay1} yield
\begin{eqnarray}\label{(k+1)-decay2}
   &&(1+t)^{(k+1)+\frac{1}{2}+\vartheta}\left\|w_{-\ell}\nabla_x^{k+1}u_2(t)\right\|^2\nonumber\\
   &&+\int_0^t\left((1+s)^{(k+1)-\frac{1}{2}}\left\|\langle\xi\rangle w_{-\ell}\nabla_x^{k+1}u_2\right\|^2
   +(1+s)^{(k+1)+\vartheta+\frac{1}{2}}\left\|w_{-\ell}\nabla_x^{k+1}u_2\right\|^2_{\nu}\right)ds\\
   &\lesssim&\CE_{\infty}(t)\int_0^t(1+s)^{-3+(1+\vartheta)\frac{1-\gamma}{2-\gamma}
   +(k+1)+\frac{1}{2}+\vartheta}\left\|\langle\xi\rangle w_{-\ell}\nabla_x^k\nabla_{\xi}u_2(s)\right\|^2ds\nonumber\\
   &&+(1+t)^{\vartheta}\left(\epsilon_0+\CE^2_{\infty}(t)\right),\nonumber
 \end{eqnarray}
where we have used the fact that \eqref{induc.-Assu.2} is assumed to
hold for all $|\alpha|+|\beta|\leq k$ and  that $$
-(|\alpha-\alpha_1|+2)+(1+\vartheta)\frac{1-\gamma}{2-\gamma}+|\alpha|+\frac{1}{2}
+\vartheta\leq|\alpha_1|-\frac{1}{2}
$$
holds for $|\alpha_1|\leq|\alpha|$, $0<\theta\leq\frac{1}{9}$ and
$\gamma\in(-3,0)$.

To control the first term in the right hand side of
\eqref{(k+1)-decay2}, we need to deal with the terms involving the
mixed spatial and velocity derivatives of $u_2(t,x,\xi)$. Letting
$|\alpha|+|\beta|=k+1$ in (\ref{mixed-w-u2}), we integrate it with
respect to $t$ over $[0,t]$ to get
\begin{eqnarray}\label{mixed-1}
   &&\left\|w_{|\beta|-\ell}\partial^{\alpha}_{\beta}u_2(t)\right\|^2+\int_0^t\left((1+s)^{-1-\vartheta}
   \left\|\langle\xi\rangle w_{|\beta|-\ell}\partial^{\alpha}_{\beta}u_2(s)\right\|^2
   +\left\|w_{|\beta|-\ell}\partial^{\alpha}_{\beta}u_2(s)\right\|^2_{\nu}\right)ds\nonumber\\
   &\lesssim&\int_0^t\left(\sum_{|\beta_1|<|\beta|}
   \left\|w_{|\beta_1|-\ell}\partial^{\alpha}_{\beta_1}u_2(s)\right\|^2_{\nu}
   +\sum_{|\beta_1|=1}\left\|w_{|\beta-\beta_1|-\ell}\partial^{\alpha+e_i}_{\beta-\beta_1}u_2(s)\right\|^2\right)ds\\
   &&+\CE_{\infty}(t)\sum_{|\alpha_1|<|\alpha|}\chi_{|\alpha|\geq1}\int_0^t(1+s)^{-(|\alpha-\alpha_1|+2)
   +(1+\vartheta)\frac{1-\gamma}{2-\gamma}}\left(\left\|\langle\xi\rangle w_{|\beta+e_i|-\ell}
   \nabla_{\xi}\partial^{\alpha_1}_{\beta}u_2(s)\right\|^2\right.\nonumber\\
   &&\left.+\left\|\langle\xi\rangle^{\frac{1}{2}}
   w_{|\beta+e_i|-\ell}\partial^{\alpha_1}_{\beta}u_2(s)\right\|^2\right)ds
   +\left(\epsilon_0+\CE^2_{\infty}(t)\right).\nonumber
  \end{eqnarray}
Moreover, we multiply \eqref{mixed-w-u2} with $|\alpha|+|\beta|=k+1$,
$|\beta|\geq1$ by $(1+t)^{|\alpha|+\frac{1}{2}+\theta}$ and integrating the resulting differential inequality with respect to $t$ over $[0,t]$ to obtain
\begin{eqnarray}\label{a}
   &&(1+t)^{|\alpha|+\frac{1}{2}+\vartheta}\left\|w_{|\beta|-\ell}\partial^{\alpha}_{\beta}u_2(t)\right\|^2\nonumber\\
   &&+\int_0^t\left((1+s)^{|\alpha|-\frac{1}{2}}\left\|\langle\xi\rangle w_{|\beta|-\ell}\partial^{\alpha}_{\beta}u_2(s)\right\|^2
   +(1+s)^{|\alpha|+\frac{1}{2}+\vartheta}
   \left\|w_{|\beta|-\ell}\partial^{\alpha}_{\beta}u_2(s)\right\|^2_{\nu}\right)ds\nonumber\\
   &\lesssim&\int_0^t(1+s)^{|\alpha|+\frac{1}{2}+\vartheta}\left(\sum_{|\beta_1|<|\beta|}\left\|w_{|\beta_1|-\ell}
   \partial^{\alpha}_{\beta_1}u_2(s)\right\|^2_{\nu}+\sum_{|\beta_1|=1}\left\|w_{|\beta-\beta_1|-\ell}
   \partial^{\alpha+e_i}_{\beta-\beta_1}u_2(s)\right\|^2\right)ds\\
   &&+\CE_{\infty}(t)\sum_{|\alpha_1|<|\alpha|}\int_0^t(1+s)^{-(|\alpha-\alpha_1|+2)
   +(1+\vartheta)\frac{1-\gamma}{2-\gamma}+|\alpha|+\frac{1}{2}+\vartheta}
   \left(\left\|\langle\xi\rangle w_{|\beta+e_i|-\ell}\nabla_{\xi}\partial^{\alpha_1}_{\beta}u_2(s)\right\|^2\right.\nonumber\\
   &&\left.+\left\|\langle\xi\rangle^{\frac{1}{2}}w_{|\beta|-\ell}\partial^{\alpha_1}_{\beta}u_2\right\|^2\right)ds
   +(1+t)^{\vartheta}\left(\epsilon_0+\CE^2_{\infty}(t)\right)
   +\int_0^t(1+s)^{|\alpha|-\frac{1}{2}+\vartheta}
   \left\|w_{|\beta|-\ell}\partial^{\alpha}_{\beta}u_2\right\|^2ds.\nonumber
  \end{eqnarray}
For the last term in the right hand side of the inequality above, by
Lemma \ref{in.} (with $m=|\alpha|-\frac{1}{2}$), we can bound it
by
\begin{eqnarray}\label{first-term-again}
&&\int_0^t(1+s)^{|\alpha|-\frac{1}{2}+\vartheta}
   \left\|w_{|\beta|-\ell}\partial^{\alpha}_{\beta}u_2\right\|^2ds\nonumber\\
&\lesssim&\eta\int_0^t\left((1+s)^{|\alpha|-\frac{1}{2}}\left\|\langle\xi\rangle w_{|\beta|-\ell}\partial^{\alpha}_{\beta}u_2\right\|^2
   +(1+s)^{|\alpha|+\frac{1}{2}+\vartheta}\left\|w_{|\beta|-\ell}\partial^{\alpha}_{\beta}u_2\right\|^2_{\nu}\right)ds\\
&&+C_{\eta}\int_0^t\left\|w_{|\beta|-\ell}\partial^{\alpha}_{\beta}u_2(s)\right\|_{\nu}^2ds.\nonumber
\end{eqnarray}
Here  $\eta>0$ can be any sufficiently small positive constant.

A suitably linear combination of \eqref{mixed-1}, \eqref{a}, and \eqref{first-term-again} yields
\begin{eqnarray}\label{b}
   &&(1+t)^{|\alpha|+\frac{1}{2}+\vartheta}\left\|w_{|\beta|-\ell}\partial^{\alpha}_{\beta}u_2(t)\right\|^2\nonumber\\
   &&+\int_0^t\left((1+s)^{|\alpha|-\frac{1}{2}}\left\|\langle\xi\rangle w_{|\beta|-\ell}\partial^{\alpha}_{\beta}u_2(s)\right\|^2
   +(1+s)^{|\alpha|+\frac{1}{2}+\vartheta}
   \left\|w_{|\beta|-\ell}\partial^{\alpha}_{\beta}u_2(s)\right\|^2_{\nu}\right)ds\nonumber\\
   &\lesssim&\int_0^t(1+s)^{|\alpha|+\frac{1}{2}+\vartheta}\left(\sum_{|\beta_1|<|\beta|}\left\|w_{|\beta_1|-\ell}
   \partial^{\alpha}_{\beta_1}u_2(s)\right\|^2_{\nu}+\sum_{|\beta_1|=1}\left\|w_{|\beta-\beta_1|-\ell}
   \partial^{\alpha+e_i}_{\beta-\beta_1}u_2(s)\right\|^2\right)ds\nonumber\\
   &&+\CE_{\infty}(t)\sum_{|\alpha_1|<|\alpha|}\int_0^t(1+s)^{-(|\alpha-\alpha_1|+2)
   +(1+\vartheta)\frac{1-\gamma}{2-\gamma}+|\alpha|+\frac{1}{2}+\vartheta}
   \left(\left\|\langle\xi\rangle w_{|\beta+e_i|-\ell}\nabla_{\xi}\partial^{\alpha_1}_{\beta}u_2(s)\right\|^2\right.\\
   &&\left.+\left\|\langle\xi\rangle^{\frac{1}{2}}w_{|\beta|-\ell}\partial^{\alpha_1}_{\beta}u_2\right\|^2\right)ds
   +(1+t)^{\vartheta}\left(\epsilon_0+\CE^2_{\infty}(t)\right)\nonumber\\
   &\lesssim&\int_0^t\left((1+s)^{|\alpha+e_i|-\frac{1}{2}}\left\|\langle\xi\rangle w_{|\beta-e_i|-\ell}
   \partial^{\alpha+e_i}_{\beta-e_i}u_2\right\|^2+(1+s)^{|\alpha+e_i|+\frac{1}{2}+\vartheta}
   \left\|w_{|\beta-e_i|-\ell}\partial^{\alpha+e_i}_{\beta-e_i}u_2\right\|_{\nu}^2\right)ds\nonumber\\
   &&+\CE_{\infty}(t)\sum_{|\alpha_1|=|\alpha|-1}\int_0^t(1+s)^{-|\alpha_1|-\frac{1}{2}}
   \left\|\langle\xi\rangle w_{|\beta+e_i|-\ell}\nabla_{\xi}\partial^{\alpha_1}_{\beta}u_2(s)\right\|^2ds
   +(1+t)^{\vartheta}\left(\epsilon_0+\CE^2_{\infty}(t)\right).\nonumber
  \end{eqnarray}
Here we have used  Lemma \ref{in.} with $m=|\alpha|+\frac{1}{2}$, and the induction assumption that \eqref{induc.-Assu.2} holds for $|\alpha|+|\beta|\leq k$. It is worth to pointing out that the last term vanishes when $|\alpha|=0$.

A further suitable linear combination of the estimate \eqref{b} for the cases
$|\alpha|=0,~|\alpha|=1,...,|\alpha|=k$ together with the estimate \eqref{(k+1)-decay2} for $|\alpha|=k+1$, we can deduce for  $|\alpha|+|\beta|=k+1$ that
\begin{eqnarray}\label{part<N-1}
   &&\sum_{|\alpha|+|\beta|=k+1}\sum_{|\alpha|=0}^{k+1}\widetilde{M}_{|\alpha|}
   \left\{(1+t)^{|\alpha|+\frac{1}{2}+\vartheta}
   \left\|w_{|\beta|-\ell}\partial^{\alpha}_{\beta}u_2(t)\right\|^2\right.\nonumber\\
   &&\left.+\int_0^t\left((1+s)^{|\alpha|-\frac{1}{2}}\left\|\langle\xi\rangle w_{|\beta|-\ell}\partial^{\alpha}_{\beta}u_2(s)\right\|^2
   +(1+s)^{|\alpha|+\frac{1}{2}+\vartheta}
   \left\|w_{|\beta|-\ell}\partial^{\alpha}_{\beta}u_2(s)\right\|^2_{\nu}\right)ds\right\}\\
   &\lesssim&(1+t)^{\vartheta}\left(\epsilon_0+\CE^2_{\infty}(t)\right),\nonumber
  \end{eqnarray}
where we have used the smallness of $\CE_{\infty}(t)$.

\eqref{part<N-1} implies that \eqref{induc.-Assu.2} holds for $|\alpha|+|\beta|=k+1\leq N-2$ and the proof of Lemma \ref{lower_order_microscopic} is complete by the principle of mathematical induction.
\end{proof}

\section{Higher Order Energy Estimates}
This section is devoted to deducing the desired higher order energy type estimates, i.e. the weighted estimates on $\partial^\alpha_\beta u$ for $(N-1\leq |\alpha|+|\beta|\leq N)$,  in terms of the temporal energy functional $\mathcal{E}_\infty(t)$. As mentioned before, when we deal with the higher order energy type estimates on the solution $u(t,x,\xi)$ of the Cauchy problem of the one-species VPB system \eqref{u}, we will encounter the terms like $\left\|\nabla_x^Nu\right\|^2$ whose temporal decay estimates is not obtained by Lemma \ref{non-weight}. Thus we need to treat the cases $|\alpha|+|\beta|=N-1$ and $|\alpha|+|\beta|=N$ case by case individually. For our later use, we now write down the macroscopic equations of the one-species VPB system \eqref{u}$_1$-\eqref{u}$_2$ up to the third-order moments. As in \cite{Duan-Strain-ARMA-2011}, for any $v=v(\xi)$, if we define moment functions $\Theta_{ij}(v)$ and $\Lambda_i(v),~1\leq i,j\leq3,$ by
\begin{equation*}
\Theta_{ij}(v)=\int_{\R^3}(\xi_i\xi_j-1){\bf M}^{\frac{1}{2}}vd\xi,\quad
\Lambda_i(v)=\frac{1}{10}\int_{\R^3}\left(|\xi|^2-5\right)\xi_i{\bf M}^{\frac{1}{2}}vd\xi,
\end{equation*}
then, one can derive from \eqref{u} a fluid-type system of equations
\begin{equation*}\label{fluid-equ.}
\begin{cases}
\partial_ta+\nabla_x\cdot b=0,\\
\partial_tb+\nabla_x\cdot(a+2c)+\nabla_x\cdot\Theta(u_2)-\nabla_x\phi=\nabla_x\phi a,\\
\partial_tc+\frac{1}{3}\nabla_x\cdot b+\frac53\nabla_x\cdot\Lambda(u_2)=\frac{1}{3}\nabla_x\phi\cdot b,\\
\triangle_x\phi=a
\end{cases}
\end{equation*}
and
\begin{equation*}\label{mic-u}
\begin{cases}
\partial_t\Theta_{ij}(u_2)+\partial_{x_i}b_j+\partial_{x_j}b_i
-\frac{2}{3}\delta_{ij}\nabla_x\cdot\Lambda(u_2)
=\Theta_{ij}(r+G)-\frac{2}{3}\delta_{ij}\nabla_x\phi\cdot b,\\
\partial_t\Lambda_{i}(u_2)+\partial_{x_i}c=\Lambda_i(u_2)(r+G)
\end{cases}
\end{equation*}
with
\begin{equation*}
  r=-\xi\cdot\nabla_xu_2-{\bf L}u_2,\quad
  G=\Gamma(u,u)+\frac{1}{2}\xi\cdot\nabla_x\phi u-\nabla_x\phi\cdot\nabla_{\xi}u,
\end{equation*}
where $r$ is a linear term related only to the microscopic component $u_2$ and $G$ is a quadratic nonlinear term.

A similar process as in \cite{Duan-Yang-SIMA-2010, Duan-Yamg-Zhao-JDE-2012, Duan-Yang-Zhao-M3AS-2013} yields
\begin{equation}\label{N-mac}
 \begin{split}
 \frac{d}{dt}\mathfrak{E}_N^{int}(t)+\left\|\nabla_x^N(a,b,c)\right\|^2+\left\|\nabla_x^{N+1}\phi\right\|^2
  \lesssim\left\|\nabla_x^Nu_2\right\|_{\nu}^2+(1+t)^{-(N+\frac{3}{2})}\left(\epsilon_0+\CE_{\infty}^2(t)\right),
 \end{split}
\end{equation}
where the instant energy functional $\mathfrak{E}_N^{int}(t)$ is defined as
\begin{eqnarray*}
  \mathfrak{E}_N^{int}(t)&=&-\sum_{|\alpha|=N-1}\left[\sum_{j,m}\left(\partial^{\alpha+e_j}\Theta_{jm}(u_2)
  +\frac{1}{2}\partial^{\alpha+e_m}\Theta_{mm}(u_2),\partial^{\alpha}b_m\right)\right.\\
  &&\left.+\sum_{j}\left(\partial^{\alpha+e_j}\Lambda_j(u_2),\partial^{\alpha}c\right)
  +\kappa\left(\partial^{\alpha}\nabla_xb,\partial^{\alpha}a\right)\right]
 \end{eqnarray*}
and it is easy to see that
$$
\left|\mathfrak{E}_N^{int}(t)\right|\lesssim\left|\nabla_x^Nu\right|^2+\left|\nabla_x^{N-1}{\bf P}u\right|^2.
$$
On the other hand, for $|\alpha|=N-1$, we can deduce from Remark \ref{u2-(N-1)} that
\begin{eqnarray}\label{x-u2(N-1)}
   &&\frac{d}{dt}\left(M\left\|\nabla_x^{N-1}u_2(t)\right\|^2
   +\left\|w_{-\ell}\nabla_x^{N-1}u_2(t)\right\|^2\right)\nonumber\\
   &&+(1+t)^{-1-\vartheta}\left\|\langle\xi\rangle w_{-\ell}\nabla_x^{N-1}u_2\right\|^2+\left\|w_{-\ell}\nabla_x^{N-1}u_2\right\|^2_{\nu}\\
   &\lesssim&\CE_{\infty}(t)\sum_{|\alpha_1|<N-1}(1+t)^{-(|\alpha-\alpha_1|+2)+(1+\vartheta)\frac{1-\gamma}{2-\gamma}}
   \left(\left\|\langle\xi\rangle w_{-\ell}\partial^{\alpha_1}\nabla_{\xi}u_2\right\|^2
   +\left\|\langle\xi\rangle^{\frac{1}{2}}w_{-\ell}\partial^{\alpha_1}u_2\right\|^2\right)\nonumber\\
   &&+(1+t)^{-N-\frac{3}{2}+\frac{2}{3}}\left(\epsilon_0+\CE^2_{\infty}(t)\right)
   +\left\|\nabla_x^{N}u\right\|_{\nu}^2.\nonumber
 \end{eqnarray}
For $|\alpha|=N$, we apply $\partial^{\alpha}(|\alpha|=N)$ to the equation $\eqref{u}_1$ to get that
\begin{eqnarray}\label{x-u-equ.}
  &&\partial_t\partial^{\alpha}u+\xi\cdot\nabla_x\partial^{\alpha}u
  +\nabla_x\phi\cdot\nabla_{\xi}\partial^{\alpha}u\nonumber\\
  &&-\frac{1}{2}\xi\cdot\nabla_x\phi\partial^{\alpha}u
  +\sum_{|\alpha_1|<|\alpha|}C_{\alpha}^{\alpha_1}\partial^{\alpha-\alpha_1}\nabla_x\phi\cdot
  \nabla_{\xi}\partial^{\alpha_1}u\\
  &&-\frac{1}{2}\sum_{|\alpha_1|<|\alpha|}C_{\alpha}^{\alpha_1}\xi\cdot\partial^{\alpha-\alpha_1}\nabla_x\phi
  \partial^{\alpha_1}u-\partial^{\alpha}\nabla_x\phi\cdot\xi {\bf M}^{\frac{1}{2}}
  -{\bf L}\partial^{\alpha}u\nonumber\\
  &=&\partial^{\alpha}\Gamma(u,u).\nonumber
 \end{eqnarray}
Multiplying the above identity \eqref{x-u-equ.} by $\partial^{\alpha}u$, taking summation over $|\alpha|=N$, and integrating the final resulting identity with respect to $x$ and $\xi$ over $\R^3_x\times\R_\xi^3$, we have by employing similar analysis used to deduce \eqref{x-u2} that
\begin{eqnarray}\label{x-u}
  &&\frac{1}{2}\frac{d}{dt}\sum_{|\alpha|=N}\left\{\left\|\partial^{\alpha}u(t)\right\|^2
  +\left\|\partial^{\alpha}\nabla_x\phi\right\|^2\right\}
  +\eta_0\sum_{|\alpha|=N}\left\|\partial^{\alpha}u_2\right\|^2_{\nu}\nonumber\\
  &\lesssim&(1+t)^{-N-\frac{3}{2}+\frac{1}{6}}\left(\epsilon_0+\CE^2_{\infty}(t)\right)
  +\sum_{|\alpha|=N}\left(\eta\left\|\partial^{\alpha}u\right\|_{\nu}^2
  +\left\|\nabla_x\phi\right\|_{L^{\infty}}^{\frac{2-\gamma}{1-\gamma}}
  \left\|\langle\xi\rangle\partial^{\alpha}u\right\|^2\right)\nonumber\\
  &\lesssim&C_{\eta}(1+t)^{-N-\frac{3}{2}+\frac{1}{6}}\left(\epsilon_0+\CE^2_{\infty}(t)\right)
  +\sum_{|\alpha|=N}\left(\eta\left\|\partial^{\alpha}u\right\|_{\nu}^2
  +(1+t)^{-\frac{5}{4}}\CE^{-\frac{5}{8}}\left\|\langle\xi\rangle\partial^{\alpha}u\right\|^2\right),
 \end{eqnarray}
 since $\frac{2-\gamma}{1-\gamma}\geq\frac{5}{4}$ for $-3<\gamma<0$ and $\|\nabla_x\phi(t)\|_{L^{\infty}}^{5/4}\lesssim(1+t)^{-{5/4}}\CE_\infty^{{5/8}}(t)$ by (\ref{Energy}).

For the weighted estimate on the term involving the pure space derivative of $u$, multiplying \eqref{x-u-equ.} by $w_{-\ell}^2\partial^{\alpha}u$ and integrating the resulting equation over $\R^3_{x}\times\R^3_{\xi}$, one has
\begin{eqnarray}\label{x-w-u1}
   &&\frac{d}{dt}\left\|w_{-\ell}\partial^{\alpha}u(t)\right\|^2+(1+t)^{-1-\vartheta}
   \left\|\langle\xi\rangle w_{-\ell}\partial^{\alpha}u\right\|^2
   +\left\|w_{-\ell}\partial^{\alpha}u_2\right\|^2_{\nu}\nonumber\\
   &\lesssim&-\underbrace{\left(\partial^{\alpha}\left(\nabla_x\phi\cdot\nabla_{\xi}u\right),
   w_{-\ell}^2\partial^{\alpha}u\right)}_{I_1}
   +\underbrace{\frac 12\left(\partial^{\alpha}\left(\xi\cdot\nabla_x\phi u\right),w_{-\ell}^2\partial^{\alpha}u\right)}_{I_2}
   +\underbrace{\left(K\partial^{\alpha}u_2,w_{-\ell}^2\partial^{\alpha}u\right)}_{I_3}\\
   &&+\underbrace{\left(\partial^{\alpha}\nabla_x\phi\cdot\xi {\bf M}^{\frac{1}{2}},w_{-\ell}^2\partial^{\alpha}u\right)}_{I_4}
   +\underbrace{\left(\partial^{\alpha}\Gamma(u,u),w_{-\ell}^2\partial^{\alpha}u\right)}_{I_5}.\nonumber
 \end{eqnarray}
For the third term $I_3$ in the right hand side of \eqref{x-w-u1}, by Lemma \ref{L-K} one has for each $\eta>0$ that
\begin{eqnarray*}
 I_3&\lesssim&\left\{\eta\left\|w_{-\ell}\partial^{\alpha}u_2\right\|_{\nu}
+C_{\eta}\left\|\chi_{|\xi|<2C_{\eta}}\langle\xi\rangle^{-\gamma \ell}\partial^{\alpha}u_2\right\|\right\}
\left\|w_{-\ell}\partial^{\alpha}u\right\|_{\nu}\\
&\lesssim&\eta\left\|w_{-\ell}\partial^{\alpha}u_2\right\|_{\nu}^2
+C_{\eta}\left\{\left\|\partial^{\alpha}u_2\right\|_{\nu}^2
+\left\|\partial^{\alpha}(a,b,c)\right\|^2\right\}.
 \end{eqnarray*}
For the fourth term $I_4$, we have from Cauchy's inequality that
\begin{equation*}
 I_4\lesssim\eta\left\|\partial^{\alpha}u\right\|_{\nu}^2+C_{\eta}\left\|\partial^{\alpha}\nabla_x\phi\right\|^2
 \lesssim\eta\left\|\partial^{\alpha}u_2\right\|_{\nu}^2+C_{\eta}\left\{\left\|\partial^{\alpha}\nabla_x\phi\right\|^2
 +\left\|\partial^{\alpha}(a,b,c)\right\|^2\right\}.
\end{equation*}
As to the last term $I_5$, one can get from Lemma \ref{Gamma} that
\begin{equation*}
I_5\lesssim (1+t)^{-N-\frac{3}{2}+\frac{1}{6}}\left(\epsilon_0+\CE^2_{\infty}(t)\right).
\end{equation*}
Now for the term $I_1$, we can write it as
\begin{equation*}
  I_1=\underbrace{\left(\nabla_x\phi\cdot\nabla_{\xi}\partial^{\alpha}u,w_{-\ell}^2\partial^{\alpha}u\right)}_{I_{1,1}}
  +\underbrace{\sum_{|\alpha_1|<|\alpha|}C_{\alpha}^{\alpha_1}\left(\partial^{\alpha-\alpha_1}\nabla_x\phi\cdot
  \nabla_{\xi}\partial^{\alpha_1}u,w_{-\ell}^2\partial^{\alpha}u\right)}_{I_{1,2}}.
\end{equation*}
Note that $\nabla_{\xi}w_{-\ell}^2(t,\xi)\lesssim\langle\xi\rangle w_{-\ell}^2(t,\xi)$, we
apply integration by parts with respect to $\xi$ and use (\ref{basic1}) to bound $I_{1,1}$ as follows
\begin{eqnarray*}
I_{1,1}&=&\frac 12\left(1,\nabla_x\phi\cdot\nabla_{\xi}\left\{(w_{-\ell}\partial^{\alpha}u)^2\right\}\right)
  -\frac 12\left(\nabla_x\phi\cdot\nabla_{\xi}w_{-\ell}^2(t,\xi),\left(\partial^{\alpha}u\right)^2\right)\\
  &\lesssim&\eta\left(\left\|w_{-\ell}\partial^{\alpha}u_2\right\|_{\nu}^2
  +\left\|\partial^{\alpha}(a,b,c)\right\|^2\right)
  +C_{\eta}\left\|\nabla_x\phi\right\|_{L^{\infty}}^{\frac{2-\gamma}{1-\gamma}}\left\|\langle\xi\rangle w_{-\ell} \partial^{\alpha}u\right\|^2.
 \end{eqnarray*}
And we can further write $I_{1,2}$ as
\begin{eqnarray*}
  I_{1,2}&=&\sum_{|\alpha_1|<|\alpha|}C_{\alpha}^{\alpha_1}\left(\partial^{\alpha-\alpha_1}\nabla_x\phi\cdot
  \nabla_{\xi}\partial^{\alpha_1}u_1,w_{-\ell}^2\partial^{\alpha}u\right)
  +\sum_{|\alpha_1|<|\alpha|}C_{\alpha}^{\alpha_1}\left(\partial^{\alpha-\alpha_1}\nabla_x\phi\cdot
  \nabla_{\xi}\partial^{\alpha_1}u_2,w_{-\ell}^2\partial^{\alpha}u_1\right)\\
  &&+\sum_{|\alpha_1|<|\alpha|}C_{\alpha}^{\alpha_1}\left(\partial^{\alpha-\alpha_1}\nabla_x\phi\cdot
  \nabla_{\xi}\partial^{\alpha_1}u_2,w_{-\ell}^2\partial^{\alpha}u_2\right).
 \end{eqnarray*}
Notice that the Maxwellian in the macroscopic component can absorb the weight function when $0<q\ll1$ and any algebraic power of $\langle\xi\rangle$, we can apply integration by parts with respect to $\xi$ again and use the definition \eqref{Energy} of $\CE_{\infty}(t)$ to control the first two terms in the right hand side of the inequality above as
\begin{equation*}
  \eta\left\|\partial^{\alpha}u\right\|_{\nu}^2+C_{\eta}(1+t)^{-N-\frac{3}{2}}\CE_{\infty}^2(t),
\end{equation*}
since $|\alpha|=N$. For the last term in the right hand side of the above inequality,  according to the range of $\alpha_1$, we estimate it in three cases: For the case of $|\alpha_1|=0$, we have by employing the H$\ddot{o}$lder inequality with $(p,q,r)=(2,\infty,2)$ that
\begin{eqnarray*}
  &&\left|\left(\partial^{\alpha}\nabla_x\phi\cdot
  \nabla_{\xi}u_2,w_{-\ell}^2\partial^{\alpha}u_2\right)\right|\\
  &\lesssim&\left\|\partial^{\alpha}\nabla_x\phi\right\|
  \left\|\langle\xi\rangle w_{1-\ell}\nabla_{\xi}u_2\right\|_{L^{\infty}(L^2_{\xi})}
  \left\|\langle\xi\rangle^{\frac{1}{2}}w_{-\ell}\partial^{\alpha}u_2\right\|\\
  &\lesssim&\left\|\partial^{\alpha}\nabla_x\phi\right\|
  \left\|\langle\xi\rangle w_{1-\ell}\nabla_{\xi}u_2\right\|_{L^{\infty}(L^2_{\xi})}
  \left\|\langle\xi\rangle w_{-\ell}\partial^{\alpha}u_2\right\|^{\frac{1-\gamma}{2-\gamma}}
  \left\|\langle\xi\rangle^{\frac{\gamma}{2}}w_{-\ell}\partial^{\alpha}u_2\right\|^{\frac{1}{2-\gamma}}\\
  &\lesssim&\eta\left((1+t)^{-1-\vartheta}\left\|\langle\xi\rangle w_{-\ell}\partial^{\alpha} u_2\right\|^2
   +\left\|w_{-\ell}\partial^{\alpha}u_2\right\|_{\nu}^2\right)\\
   &&+C_{\eta}(1+t)^{-\left(N-\frac{1}{2}\right)}(1+t)^{(1+\vartheta)\frac{1-\gamma}{2-\gamma}}
   \CE_{\infty}(t)\left\|\langle\xi\rangle w_{1-\ell}\nabla_{\xi}u_2\right\|_{L^{\infty}(L^2_{\xi})}^2;
 \end{eqnarray*}
For the case of $1\leq|\alpha_1|\leq N-2$,  we can use the H$\ddot{o}$lder inequality with $(p,q,r)=(3,6,2)$ to yield
\begin{eqnarray*}
  &&\sum_{1\leq\alpha_1\leq N-2}\left|\left(\partial^{\alpha-\alpha_1}\nabla_x\phi\cdot
  \nabla_{\xi}\partial^{\alpha_1}u_2,w_{-\ell}^2\partial^{\alpha}u_2\right)\right|\\
  &\lesssim&\left\|\partial^{\alpha-\alpha_1}\nabla_x\phi\right\|_{L^3}
  \left\|\langle\xi\rangle w_{1-\ell}\nabla_{\xi}\partial^{\alpha_1}u_2\right\|_{L^6(L^2_{\xi})}
  \left\|\langle\xi\rangle^{\frac{1}{2}}w_{-\ell}\partial^{\alpha}u_2\right\|\\
  &\lesssim&\left\|\partial^{\alpha-\alpha_1}\nabla_x\phi\right\|_{L^3}
  \left\|\langle\xi\rangle w_{1-\ell}\nabla_{\xi}\partial^{\alpha_1}u_2\right\|_{L^6(L^2_{\xi})}
  \left\|\langle\xi\rangle w_{-\ell}\partial^{\alpha}u_2\right\|^{\frac{1-\gamma}{2-\gamma}}
  \left\|\langle\xi\rangle^{\frac{\gamma}{2}}w_{-\ell}\partial^{\alpha}u_2\right\|^{\frac{1}{2-\gamma}}\\
  &\lesssim&\eta\left((1+t)^{-1-\vartheta}\left\|\langle\xi\rangle w_{-\ell}\partial^{\alpha}u_2\right\|^2
   +\left\|w_{-\ell}\partial^{\alpha}u_2\right\|_{\nu}^2\right)\\
  &&+C_{\eta}(1+t)^{-(|\alpha-\alpha_1|+1)}(1+t)^{(1+\vartheta)\frac{1-\gamma}{2-\gamma}}
   \CE_{\infty}(t)\left\|\langle\xi\rangle w_{1-\ell}\nabla_{\xi}\nabla_x\partial^{\alpha_1}u_2\right\|^2.
 \end{eqnarray*}
Finally for the case of $|\alpha_1|=N-1$, one can deduce from the H$\ddot{o}$lder inequality with $(p,q,r)=(\infty,2,2)$ that
\begin{eqnarray*}
  &&\sum_{|\alpha_1|=N-1}\left|\left(\partial^{\alpha-\alpha_1}\nabla_x\phi\cdot
  \nabla_{\xi}\partial^{\alpha_1}u_2,w_{-\ell}^2\partial^{\alpha}u_2\right)\right|\\
  &\lesssim&\left\|\partial^{\alpha-\alpha_1}\nabla_x\phi\right\|_{L^{\infty}}
  \left\|\langle\xi\rangle w_{1-\ell}\nabla_{\xi}\partial^{\alpha_1}u_2\right\|
  \left\|\langle\xi\rangle^{\frac{1}{2}}w_{-\ell}\partial^{\alpha}u_2\right\|\\
  &\lesssim&\eta\left((1+t)^{-1-\vartheta}\left\|\langle\xi\rangle w_{-\ell}\partial^{\alpha}u_2\right\|^2
   +\left\|w_{-\ell}\partial^{\alpha}u_2\right\|_{\nu}^2\right)\\
   &&+C_{\eta}(1+t)^{-(|\alpha-\alpha_1|+2)}(1+t)^{(1+\vartheta)\frac{1-\gamma}{2-\gamma}}
   \CE_{\infty}(t)\left\|\langle\xi\rangle w_{1-\ell}\nabla_{\xi}\partial^{\alpha_1}u_2\right\|^2.
 \end{eqnarray*}
Collecting the estimates above, one has
\begin{eqnarray*}
I_{1,2}&\lesssim&\eta\left((1+t)^{-1-\vartheta}\left\|\langle\xi\rangle w_{-\ell}\partial^{\alpha}u_2\right\|^2
   +\left\|w_{-\ell}\partial^{\alpha}u_2\right\|_{\nu}^2\right)+C_{\eta}(1+t)^{-N-\frac{3}{2}}\CE_{\infty}^2(t)\\
  &&+C_{\eta}(1+t)^{-(|\alpha-\alpha_1|+2)+(1+\vartheta)\frac{1-\gamma}{2-\gamma}}
   \CE_{\infty}(t)\sum_{2\leq|\alpha_1|<|\alpha|}\left\|\langle\xi\rangle w_{1-\ell}\nabla_{\xi}\partial^{\alpha_1}u_2\right\|^2\\
   &&+C_{\eta}(1+t)^{-\left(N-\frac{1}{2}\right)+(1+\vartheta)\frac{1-\gamma}{2-\gamma}}
   \CE_{\infty}(t)\left\|\langle\xi\rangle w_{1-\ell}\nabla_{\xi}u_2\right\|_{L^{\infty}(L^2_{\xi})}^2.
\end{eqnarray*}
Consequently, we can bound $I_1$ by
\begin{eqnarray*}
I_1&\lesssim&\eta\left((1+t)^{-1-\vartheta}\left\|\langle\xi\rangle w_{-\ell}\partial^{\alpha}u_2\right\|^2
   +\left\|w_{-\ell}\partial^{\alpha}u_2\right\|_{\nu}^2+\left\|\partial^{\alpha}(a,b,c)\right\|^2\right)\\
   && +C_{\eta}\left\|\nabla_x\phi\right\|_{L^{\infty}}^{\frac{2-\gamma}{1-\gamma}}\left\|\langle\xi\rangle w_{-\ell} \partial^{\alpha}u\right\|^2+C_{\eta}(1+t)^{-N-\frac{3}{2}}\CE_{\infty}^2(t)\\
  &&+C_{\eta}(1+t)^{-(|\alpha-\alpha_1|+2)+(1+\vartheta)\frac{1-\gamma}{2-\gamma}}
   \CE_{\infty}(t)\sum_{2\leq|\alpha_1|<|\alpha|}\left\|\langle\xi\rangle w_{1-\ell}\nabla_{\xi}\partial^{\alpha_1}u_2\right\|^2\\
   &&+C_{\eta}(1+t)^{-\left(N-\frac{1}{2}\right)+(1+\vartheta)\frac{1-\gamma}{2-\gamma}}
   \CE_{\infty}(t)\left\|\langle\xi\rangle w_{1-\ell}\nabla_{\xi}u_2\right\|_{L^{\infty}(L^2_{\xi})}^2.
\end{eqnarray*}

At last, we deal with the term $I_2$. To this end, we further rewrite it as:
\begin{equation*}
  I_2=\underbrace{\frac 12\left(\xi\cdot\nabla_x\phi\partial^{\alpha}u,w_{-\ell}^2\partial^{\alpha}u\right)}_{I_{2,1}}
  +\underbrace{\frac 12\sum_{|\alpha_1|<|\alpha|}C_{\alpha}^{\alpha_1}\left(\xi\cdot
  \partial^{\alpha-\alpha_1}\nabla_x\phi\partial^{\alpha_1}u,w_{-\ell}^2\partial^{\alpha}u\right)}_{I_{2,2}},
\end{equation*}
Similar to the estimation of $I_{1,1}$ and $I_{1,2}$,  $I_{2,1}$ and $I_{2,2}$ can be estimated as follows:
\begin{equation*}
   I_{2,1}\lesssim \eta\left(\left\|w_{-\ell}\partial^{\alpha}u_2\right\|_{\nu}^2+\left\|\partial^{\alpha}(a,b,c)\right\|^2\right)
  +C_{\eta}\left\|\nabla_x\phi\right\|_{L^{\infty}}^{\frac{2-\gamma}{1-\gamma}}\left\|\langle\xi\rangle w_{-\ell} \partial^{\alpha}u\right\|^2,
\end{equation*}
and
\begin{eqnarray*}
  I_{2,2}&=&\sum_{|\alpha_1|<|\alpha|}C_{\alpha}^{\alpha_1}\left\{
  \left(\xi\cdot\partial^{\alpha-\alpha_1}\nabla_x\phi\partial^{\alpha_1}u_1,w_{-\ell}^2\partial^{\alpha}u\right)
  +\left(\xi\cdot\partial^{\alpha-\alpha_1}\nabla_x\phi\partial^{\alpha_1}u_2,
  w_{-\ell}^2\partial^{\alpha}u_1\right)\right.\\
  &&+\left(\xi\cdot\partial^{\alpha-\alpha_1}\nabla_x\phi\partial^{\alpha_1}u_2,
  w_{-\ell}^2\partial^{\alpha}u_2\right)\Big\}\\
  &\lesssim& \eta\left\|w_{-\ell}\partial^{\alpha}u_2\right\|_{\nu}^2+\left\|\partial^{\alpha}(a,b,c)\right\|^2
  +(1+t)^{-(N+\frac{3}{2})}\CE_{\infty}^2(t)\\
 &&+\eta\left((1+t)^{-1-\vartheta}\left\|\langle\xi\rangle w_{-\ell}\partial^{\alpha}u_2\right\|^2
   +\left\|w_{-\ell}\partial^{\alpha}u_2\right\|_{\nu}^2\right)\\
   &&+C_{\eta}\sum_{|\alpha_1|<|\alpha|}(1+t)^{-(|\alpha-\alpha_1|+2)+(1+\vartheta)\frac{1-\gamma}{2-\gamma}}
   \left\|\langle\xi\rangle^{\frac{1}{2}}w_{-\ell}\partial^{\alpha_1}u_2\right\|^2\CE_{\infty}(t),
 \end{eqnarray*}
Thus, we obtain
\begin{eqnarray*}
I_2&\leq&\eta\left((1+t)^{-1-\vartheta}\left\|\langle\xi\rangle w_{-\ell}\partial^{\alpha}u_2\right\|^2
   +\left\|w_{-\ell}\partial^{\alpha}u_2\right\|_{\nu}^2+\left\|\partial^{\alpha}(a,b,c)\right\|^2\right)\\
   && +C_{\eta}\left\|\nabla_x\phi\right\|_{L^{\infty}}^{\frac{2-\gamma}{1-\gamma}}\left\|\langle\xi\rangle w_{-\ell} \partial^{\alpha}u\right\|^2+C_{\eta}(1+t)^{-N-\frac{3}{2}}\CE_{\infty}^2(t)\\
  &&+C_{\eta}(1+t)^{-(|\alpha-\alpha_1|+2)+(1+\vartheta)\frac{1-\gamma}{2-\gamma}}
   \sum_{2\leq|\alpha_1|<|\alpha|}
   \left\|\langle\xi\rangle^{\frac{1}{2}}w_{-\ell}\partial^{\alpha_1}u_2\right\|^2\CE_{\infty}(t)\\
   &&+C_{\eta}(1+t)^{-(N-\frac{1}{2})+(1+\vartheta)\frac{1-\gamma}{2-\gamma}}
   \left\|\langle\xi\rangle^{\frac{1}{2}}w_{-\ell}u_2\right\|_{L^{\infty}(L^2_{\xi})}^2\CE_{\infty}(t).
\end{eqnarray*}

Substituting the estimates on $I_1$-$I_5$  into \eqref{x-w-u1} and choosing $\eta>0$ small enough, we have
\begin{eqnarray}\label{x-w-u}
   &&\frac{d}{dt}\left\|w_{-\ell}\partial^{\alpha}u(t)\right\|^2+q\vartheta(1+t)^{-1-\vartheta}
   \left\|\langle\xi\rangle w_{-\ell}\partial^{\alpha}u\right\|^2
   +\frac{1}{2}\left\|w_{-\ell}\partial^{\alpha}u_2\right\|^2_{\nu}\nonumber\\
   &\lesssim&\left\|\partial^{\alpha}u\right\|_{\nu}^2+\left\|\partial^{\alpha}\nabla_x\phi\right\|^2
  +(1+t)^{-N-\frac{3}{2}+\frac{1}{6}}\CE^2_{\infty}(t)\\
  &&+(1+t)^{-(N-\frac{1}{2})+(1+\vartheta)\frac{1-\gamma}{2-\gamma}}
   \left(\left\|\langle\xi\rangle w_{1-\ell}\nabla_{\xi}u_2\right\|_{L^{\infty}(L^2_{\xi})}^2
   +\left\|\langle\xi\rangle^{\frac{1}{2}}w_{-\ell}u_2\right\|_{L^{\infty}(L^2_{\xi})}^2\right)\CE_{\infty}(t)\nonumber\\
   &&+\sum_{2\leq|\alpha_1|<|\alpha|}(1+t)^{-(|\alpha-\alpha_1|+2)+(1+\vartheta)\frac{1-\gamma}{2-\gamma}}
   \left(\left\|\langle\xi\rangle w_{1-\ell}\nabla_{\xi}\partial^{\alpha_1}u_2\right\|^2
   +\left\|\langle\xi\rangle^{\frac{1}{2}}w_{-\ell}\partial^{\alpha_1}u_2\right\|^2\right)\CE_{\infty}(t).\nonumber
 \end{eqnarray}
 Choosing $0<M_1\ll M_2$ suitably large and $\delta$ small enough in (\ref{a priori assumption}), and performing $[\eqref{N-mac}+M_1\times\eqref{x-u}]\times M_2+\eqref{x-w-u}+\eqref{x-u2(N-1)}$, we can obtain
\begin{eqnarray}\label{x-u-close}
  &&\frac{d}{dt}\left\{M\left\|\nabla_x^{N-1}u_2\right\|^2+\left\|w_{-\ell}\nabla_x^{N-1}u_2\right\|^2
  +\left\|w_{-\ell}\nabla_x^Nu\right\|^2\right.\nonumber\\
  &&\left.+M_2\left[\mathfrak{E}_N^{int}(t)+M_1\left(\left\|\nabla_x^Nu\right\|^2
  +\left\|\nabla_x^{N+1}\phi\right\|^2\right)\right]\right\}\nonumber\\
  &&+(1+t)^{-1-\vartheta}\left\{\left\|\langle\xi\rangle w_{-\ell}\nabla_x^{N-1}u\right\|^2
  +\left\|\langle\xi\rangle w_{-\ell}\nabla_x^{N}u_2\right\|^2\right\}\nonumber\\
  &&+\left\|w_{-\ell}\nabla_x^{N-1}u_2\right\|^2_{\nu}
  +\left\|w_{-\ell}\nabla_x^Nu\right\|^2_{\nu}+\left\|\nabla_x^{N+1}\phi\right\|^2\\
  &\lesssim&(1+t)^{-(N-\frac{1}{2})+(1+\vartheta)\frac{1-\gamma}{2-\gamma}}
   \left(\left\|\langle\xi\rangle w_{1-\ell}\nabla_{\xi}u_2\right\|_{L^{\infty}(L^2_{\xi})}^2
   +\left\|\langle\xi\rangle^{\frac{1}{2}}w_{-\ell}u_2\right\|_{L^{\infty}(L^2_{\xi})}^2\right)
   \CE_{\infty}(t)\nonumber\\
   &&+\sum_{|\alpha|=N-1}^{N}\sum_{|\alpha_1|<|\alpha|}
   (1+t)^{-(|\alpha-\alpha_1|+2)+(1+\vartheta)\frac{1-\gamma}{2-\gamma}}
   \left(\left\|\langle\xi\rangle w_{1-\ell}\nabla_{\xi}\partial^{\alpha_1}u_2\right\|^2\right.\nonumber\\
   &&\left.+\left\|\langle\xi\rangle^{\frac{1}{2}}w_{-\ell}\partial^{\alpha_1}u_2\right\|^2\right)\CE_{\infty}(t)
   +(1+t)^{-N-\frac{3}{2}+\frac{2}{3}}\CE^2_{\infty}(t).\nonumber
 \end{eqnarray}
Integrating \eqref{x-u-close} with respect to $t$ over $[0,t]$, we have from the definition of the temporal energy functional $\CE_{\infty}(t)$ defined in \eqref{Energy} that
\begin{eqnarray}\label{x-high}
  &&\left\|w_{-\ell}\nabla_x^{N-1}u_2\right\|^2+\left\|w_{-\ell}\nabla_x^Nu\right\|^2
  +\left\|\nabla_x^{N+1}\phi\right\|^2\nonumber\\
  &&+\int_0^t(1+s)^{-1-\vartheta}\left(\left\|\langle\xi\rangle w_{-\ell}\nabla_x^Nu(s)\right\|^2
  +\left\|\langle\xi\rangle w_{-\ell}\nabla_x^{N-1}u_2(s)\right\|^2\right)ds\nonumber\\
  &&+\int_0^t\left(\left\|w_{-\ell}\nabla_x^{N-1}u_2(s)\right\|^2_{\nu}
  +\left\|w_{-\ell}\nabla_x^Nu(s)\right\|^2_{\nu}+\left\|\nabla_x^{N+1}\phi\right\|^2\right)ds\\
  &\lesssim&\epsilon_0+\CE^2_{\infty}(t)
  +\CE_{\infty}(t)\int_0^t(1+s)^{-(N-\frac{1}{2})+(1+\vartheta)\frac{1-\gamma}{2-\gamma}}
   \left(\left\|\langle\xi\rangle w_{1-\ell}\nabla_{\xi}u_2\right\|_{L^{\infty}(L^2_{\xi})}^2
   +\left\|\langle\xi\rangle^{\frac{1}{2}}w_{-\ell}u_2\right\|_{L^{\infty}(L^2_{\xi})}^2\right)\nonumber\\
   &&+\CE_{\infty}(t)\sum_{|\alpha|=N-1}^{N}\sum_{2\leq|\alpha_1|<|\alpha|}
   \int_0^t(1+s)^{-(|\alpha-\alpha_1|+2)+(1+\vartheta)\frac{1-\gamma}{2-\gamma}}
   \left(\left\|\langle\xi\rangle w_{1-\ell}\nabla_{\xi}\partial^{\alpha_1}u_2\right\|^2
   +\left\|\langle\xi\rangle^{\frac{1}{2}}w_{-\ell}\partial^{\alpha_1}u_2\right\|^2\right)ds.\nonumber
 \end{eqnarray}
Moreover, we multiply \eqref{x-u-close} by $(1+t)^{N-\frac{1}{2}+\vartheta}$ and integrate the resulting equality with respect to $t$ over $[0,t]$ to obtain
\begin{eqnarray}\label{decay-high2}
 &&(1+t)^{N-\frac{1}{2}+\vartheta}\left(\left\|w_{-\ell}\nabla_x^{N-1}u_2\right\|^2
 +\left\|w_{-\ell}\nabla_x^Nu\right\|^2+\left\|\nabla_x^{N+1}\phi\right\|^2\right)\nonumber\\
 &&+\int_0^t(1+s)^{N-\frac{3}{2}}\left(\left\|\langle\xi\rangle w_{-\ell}\nabla_x^{N-1}u_2(s)\right\|^2
  +\left\|\langle\xi\rangle w_{-\ell}\nabla_x^{N}u(s)\right\|^2\right)ds\nonumber\\
  &&+\int_0^t(1+s)^{N-\frac{1}{2}+\vartheta}\left(\left\|w_{-\ell}\nabla_x^{N-1}u_2(s)\right\|^2_{\nu}
  +\left\|w_{-\ell}\nabla_x^Nu(s)\right\|^2_{\nu}+\left\|\nabla_x^{N+1}\phi\right\|^2\right)ds\nonumber\\
  &\lesssim&(1+t)^{\vartheta}\left(\epsilon_0+\CE^2_{\infty}(t)\right)
  +\int_0^t(1+s)^{N-\frac{3}{2}+\vartheta}\left\|\nabla_x^{N-1}{\bf P}u(s)\right\|^2ds\\
  &&+\int_0^t(1+s)^{N-\frac{3}{2}+\vartheta}\left(\left\|w_{-\ell}\nabla_x^{N-1}u_2\right\|^2
  +\left\|w_{-\ell}\nabla_x^Nu\right\|^2
  +\left\|\nabla_x^{N+1}\phi\right\|^2\right)ds\nonumber\\
  &&+\CE_{\infty}(t)\int_0^t(1+s)^{\vartheta+(1+\vartheta)\frac{1-\gamma}{2-\gamma}}
  \left(\left\|\langle\xi\rangle w_{1-\ell}\nabla_{\xi}u_2\right\|_{L^{\infty}(L^2_{\xi})}^2
   +\left\|\langle\xi\rangle^{\frac{1}{2}}w_{-\ell}u_2\right\|_{L^{\infty}(L^2_{\xi})}^2\right)ds\nonumber\\
  &&+\CE_{\infty}(t)\sum_{|\alpha|=N-1}^{N}\sum_{|\alpha_1|<|\alpha|}
  \int_0^t(1+s)^{-(|\alpha-\alpha_1|+2)+N-\frac{1}{2}+\vartheta+(1+\vartheta)\frac{1-\gamma}{2-\gamma}}
  \left(\left\|\langle\xi\rangle w_{1-\ell}\nabla_{\xi}\partial^{\alpha_1}u_2\right\|^2\right.\nonumber\\
   &&\left.+\left\|\langle\xi\rangle^{\frac{1}{2}}w_{-\ell}\partial^{\alpha_1}u_2\right\|^2\right)ds.\nonumber
 \end{eqnarray}
For the second term in the right hand side of \eqref{decay-high2}, by Lemma \ref{non-weight}, it can be estimated as follows
\begin{equation}\label{second term of decay high2}
  \int_0^t(1+s)^{N-\frac{3}{2}+\vartheta}\left\|\nabla_x^{N-1}{\bf P}u(s)\right\|^2ds\lesssim(1+t)^{\vartheta}\left(\epsilon_0+\CE^2_{\infty}(t)\right).
\end{equation}
Employing Lemma \ref{in.} with $m=N-\frac{3}{2}$, the third term in the right hand side of \eqref{decay-high2} can be bounded by
\begin{eqnarray}\label{second term of decay high3}
   &&\int_0^t(1+s)^{N-\frac{3}{2}+\vartheta}\left(\left\|w_{-\ell}\nabla_x^{N-1}u_2\right\|^2
   +\left\|w_{-\ell}\nabla_x^Nu(s)\right\|^2+\left\|\nabla_x^{N+1}\phi(s)\right\|^2\right)ds\nonumber\\
   &\lesssim&\eta\int_0^t(1+s)^{N-\frac{3}{2}}\left(\left\|\langle\xi\rangle w_{-\ell}\nabla_x^{N-1}u_2(s)\right\|^2
   +\left\|\langle\xi\rangle w_{-\ell}\nabla_x^Nu(s)\right\|^2\right)ds\\
   &&+\eta\int_0^t(1+s)^{N-\frac{1}{2}+\vartheta}\left(\left\|w_{-\ell}\nabla_x^{N-1}u_2(s)\right\|^2_{\nu}
  +\left\|w_{-\ell}\nabla_x^Nu(s)\right\|^2_{\nu}+\left\|\nabla_x^{N+1}\phi(s)\right\|^2\right)ds\nonumber\\
  &&+C_{\eta}\int_0^t\left(\left\|w_{-\ell}\nabla_x^{N-1}u_2(s)\right\|^2_{\nu}
  +\left\|w_{-\ell}\nabla_x^Nu(s)\right\|^2_{\nu}+\left\|\nabla_x^{N+1}\phi(s)\right\|^2\right)ds.\nonumber
 \end{eqnarray}
Multiplying \eqref{x-high} by a suitably large positive constant and adding it to \eqref{decay-high2}, we can use \eqref{second term of decay high2} and \eqref{second term of decay high3} to obtain
\begin{eqnarray}\label{decay-high3}
 &&(1+t)^{N-\frac{1}{2}+\vartheta}\left(\left\|w_{-\ell}\nabla_x^{N-1}u_2(t)\right\|^2
 +\left\|w_{-\ell}\nabla_x^Nu(t)\right\|^2+\left\|\nabla_x^{N+1}\phi(t)\right\|^2\right)\nonumber\\
 &&+\int_0^t(1+s)^{N-\frac{3}{2}}\left(\left\|\langle\xi\rangle w_{-\ell}\nabla_x^{N-1}u_2(s)\right\|^2
  +\left\|\langle\xi\rangle w_{-\ell}\nabla_x^{N}u(s)\right\|^2\right)ds\nonumber\\
  &&+\int_0^t(1+s)^{N-\frac{1}{2}+\vartheta}\left(\left\|w_{-\ell}\nabla_x^{N-1}u_2(s)\right\|^2_{\nu}
  +\left\|w_{-\ell}\nabla_x^Nu(s)\right\|^2_{\nu}+\left\|\nabla_x^{N+1}\phi(s)\right\|^2\right)ds\\
  &\lesssim&\CE_{\infty}(t)
  \int_0^t(1+s)^{-3+N-\frac{1}{2}+\vartheta+(1+\vartheta)\frac{1-\gamma}{2-\gamma}}
  \left(\left\|\langle\xi\rangle w_{1-\ell}\nabla_{\xi}\nabla_x^{N-1}u_2\right\|^2\right.\nonumber\\
   &&\left.+\left\|\langle\xi\rangle^{\frac{1}{2}}w_{-\ell}\partial^{\alpha_1}u_2\right\|^2\right)ds
   +(1+t)^{\vartheta}\left(\epsilon_0+\CE^2_{\infty}(t)\right)\nonumber\\
   &&+\CE_{\infty}(t)\sum_{k=N-1}^{N}\int_0^t(1+s)^{-3+N-\frac{1}{2}+\vartheta+(1+\vartheta)\frac{1-\gamma}{2-\gamma}}
  \left\|\langle\xi\rangle w_{1-\ell}\nabla_{\xi}\nabla_x^ku_2(s)\right\|^2ds,\nonumber
 \end{eqnarray}
where we have used the following estimate
\begin{eqnarray}\label{error-term}
&&\CE_{\infty}(t)\sum_{|\alpha|=N-1}^{N}\sum_{|\alpha_1|<|\alpha|}
  \int_0^t(1+s)^{-(|\alpha-\alpha_1|+2)+N-\frac{1}{2}+\vartheta+(1+\vartheta)\frac{1-\gamma}{2-\gamma}}
  \left(\left\|\langle\xi\rangle w_{1-\ell}\nabla_{\xi}\partial^{\alpha_1}u_2\right\|^2\right.\nonumber\\
   &&\left.+\left\|\langle\xi\rangle^{\frac{1}{2}}w_{-\ell}\partial^{\alpha_1}u_2\right\|^2\right)ds\nonumber\\
   &\lesssim&\CE_{\infty}(t)\sum^{N-1}_{|\alpha_1|=N-2}
  \int_0^t(1+s)^{-3+N-\frac{1}{2}+\vartheta+(1+\vartheta)\frac{1-\gamma}{2-\gamma}}
  \left\|\langle\xi\rangle w_{1-\ell}\nabla_{\xi}\partial^{\alpha_1}u_2\right\|^2ds\\
   &&+\CE_{\infty}(t)
  \int_0^t(1+s)^{-3+N-\frac{1}{2}+\vartheta+(1+\vartheta)\frac{1-\gamma}{2-\gamma}}
  \left\|\langle\xi\rangle^{\frac{1}{2}}w_{-\ell}\nabla_x^{N-1}u_2\right\|^2ds\nonumber\\
   &&+\CE_{\infty}(t)\sum_{|\alpha_1|\leq N-3}
  \int_0^t(1+s)^{|\alpha_1|-1-\frac{1}{2}+\vartheta+(1+\vartheta)\frac{1-\gamma}{2-\gamma}}
  \left\|\langle\xi\rangle w_{1-\ell}\nabla_{\xi}\partial^{\alpha_1}u_2\right\|^2ds\nonumber\\
   &&+\CE_{\infty}(t)\sum_{|\alpha_1|\leq N-2}
  \int_0^t(1+s)^{|\alpha_1|-1-\frac{1}{2}+\vartheta+(1+\vartheta)\frac{1-\gamma}{2-\gamma}}
  \left\|\langle\xi\rangle^{\frac{1}{2}}w_{-\ell}\partial^{\alpha_1}u_2\right\|^2ds\nonumber
   \end{eqnarray}
and the facts that the second term in the right hand side of \eqref{error-term} can be absorbed by the second term in the left hand side of (\ref{decay-high3}) since $\CE_{\infty}(t)\leq \delta$ can be chosen sufficiently small, and the last two terms in the right hand side of \eqref{error-term} can be controlled by $(1+t)^{\vartheta}\left(\epsilon_0+\CE^2_{\infty}(t)\right)$ due to (\ref{induc.-Assu.2}) and $0<\vartheta\leq\frac{1}{9}$. Therefore, only the first term in the right hand side of \eqref{error-term} is kept in (\ref{decay-high3}).

To deduce a closed energy type estimate based on the estimate \eqref{decay-high3}, we need to deal with the term involving the higher order mixed spatial and velocity derivatives $\partial^\alpha_\beta u(t,x,\xi)$ for the case $(N-1\leq|\alpha|+|\beta|=k\leq N,~|\beta|\geq1)$. For this purpose, for $|\alpha|+|\beta|=k,~|\beta|\geq1$, we multiply $\eqref{mixed-w-u2}$ by $(1+t)^{r_{|\alpha|}+\vartheta}$ with
$$
r_{|\alpha|}=\left\{
\begin{array}{lll}
  |\alpha|+\frac{1}{2}, & {\text {if}}\ (\alpha,\beta)\in U_{\alpha,\beta}^{\text{low}}=&\left\{(\alpha,\beta)\mid|\alpha|+|\beta|\leq N-1\right\}\\
  &&\cup
\left\{(\alpha,\beta)\mid|\alpha|+|\beta|=N,|\alpha|<N-2\right\},\\[2mm]
{\frac{N+2|\alpha|}{3}-\frac 12}, &{\text {if}}\ (\alpha,\beta)\in U_{\alpha,\beta}^{\text{high}}=&\left\{(\alpha,\beta)\mid|\alpha|+|\beta|=N,N-2\leq|\alpha|\leq N-1\right\},
\end{array}
\right.
$$
and integrate the resulting inequality with respect to $t$ over $[0,t]$ to get that
\begin{eqnarray}\label{h.-decay0}
   &&\sum\limits_{|\alpha|+|\beta|=k}(1+t)^{r_{|\alpha|}+\vartheta}
   \left\|w_{|\beta|-\ell}\partial^{\alpha}_{\beta}u_2(t)\right\|^2\nonumber\\
   &&+\sum\limits_{|\alpha|+|\beta|=k}\int_0^t\left((1+s)^{r_{|\alpha|}-1}\left\|\langle\xi\rangle w_{|\beta|-\ell}\partial^{\alpha}_{\beta}u_2(s)\right\|^2
   +(1+t)^{r_{|\alpha|}+\vartheta}
   \left\|w_{|\beta|-\ell}\partial^{\alpha}_{\beta}u_2(s)\right\|^2_{\nu}\right)ds\nonumber\\
   &\lesssim&(1+t)^{\vartheta}\left(\epsilon_0+\CE^2_{\infty}(t)\right)
   +\sum_{|\beta_1|<|\beta|\atop |\alpha|+|\beta|=k}\int_0^t(1+s)^{r_{|\alpha|}+\vartheta}
   \left\|w_{|\beta_1|-\ell}\partial^{\alpha}_{\beta_1}u_2(s)\right\|^2_{\nu}ds\\
   &&+\eta\sum\limits_{|\alpha|+|\beta|=k}\int_0^t\left((1+s)^{r_{|\alpha|}-1}\left\|\langle\xi\rangle w_{|\beta|-\ell}\partial^{\alpha}_{\beta}u_2(s)\right\|^2
   +(1+s)^{r_{|\alpha|}+\vartheta}
   \left\|w_{|\beta|-\ell}\partial^{\alpha}_{\beta}u_2(s)\right\|^2_{\nu}\right)ds\nonumber\\
   &&+C_{\eta}\sum\limits_{|\alpha|+|\beta|=k}
   \int_0^t\left\|w_{|\beta|-\ell}\partial^{\alpha}_{\beta}u_2(s)\right\|_{\nu}^2ds+
   \sum\limits_{|\alpha|+|\beta|=k}\int_0^t(1+s)^{r_{|\alpha|}+\vartheta}
   \left\|w_{|\beta-e_i|-\ell}\partial^{\alpha+e_i}_{\beta-e_i}u_2(s)\right\|^2ds\nonumber\\
   &&+\CE_{\infty}(t)\sum_{|\alpha_1|<|\alpha|\atop |\alpha|+|\beta|=k}\int_0^t(1+s)^{-(|\alpha-\alpha_1|+2)
   +(1+\vartheta)\frac{1-\gamma}{2-\gamma}+r_{|\alpha|}+\vartheta}\left(\left\|\langle\xi\rangle w_{|\beta+e_i|-\ell}
   \nabla_{\xi}\partial^{\alpha_1}_{\beta}u_2(s)\right\|^2\right.\nonumber\\
   &&\left.+\left\|\langle\xi\rangle^{\frac{1}{2}}w_{|\beta|-\ell}\partial^{\alpha_1}_{\beta}u_2(s)\right\|^2\right)ds
   +\chi_{|\alpha|=N-1}\int_0^t(1+s)^{r_{|\alpha|}+\vartheta}\left\|\nabla_x^Nu(s)\right\|_{\nu}^2ds\nonumber
\end{eqnarray}
holds for $k=N-1, N$. Here  we have used Lemma \ref{in.} with $m=r_{|\alpha|}-1$.

Now integrating \eqref{mixed-w-u2} with respect to $t$ over $[0,t]$ and multiplying the resulting inequality by a suitably large positive constant $M$ and adding the final inequality to \eqref{h.-decay0} with $k=N-1$, we can get that
\begin{eqnarray}\label{h.-decay1}
   &&\sum\limits_{|\alpha|+|\beta|=N-1}(1+t)^{r_{|\alpha|}+\vartheta}
   \left\|w_{|\beta|-\ell}\partial^{\alpha}_{\beta}u_2(t)\right\|^2\nonumber\\
   &&+\sum\limits_{|\alpha|+|\beta|=N-1}\int_0^t\left((1+s)^{r_{|\alpha|}-1}\left\|\langle\xi\rangle w_{|\beta|-\ell}\partial^{\alpha}_{\beta}u_2(s)\right\|^2
   +(1+s)^{r_{|\alpha|}+\vartheta}
   \left\|w_{|\beta|-\ell}\partial^{\alpha}_{\beta}u_2(s)\right\|^2_{\nu}\right)ds\nonumber\\
   &\lesssim&(1+t)^{\vartheta}\left(\epsilon_0+\CE^2_{\infty}(t)\right)
   +\sum\limits_{|\alpha|+|\beta|=N-1}\int_0^t(1+s)^{r_{|\alpha|}+\vartheta}
   \left\|w_{|\beta-e_i|-\ell}\partial^{\alpha+e_i}_{\beta-e_i}u_2(s)\right\|^2ds\\
   &&+\CE_{\infty}(t)\sum_{|\alpha_1|=|\alpha|-1\atop |\alpha|+|\beta|=N-1}\int_0^t(1+s)^{-(|\alpha-\alpha_1|+2)
   +(1+\vartheta)\frac{1-\gamma}{2-\gamma}+r_{|\alpha|}+\vartheta}\left(\left\|\langle\xi\rangle w_{|\beta+e_i|-\ell}
   \nabla_{\xi}\partial^{\alpha_1}_{\beta}u_2(s)\right\|^2\right.\nonumber\\
   &&\left.+\left\|\langle\xi\rangle^{\frac{1}{2}}
   w_{|\beta|-\ell}\partial^{\alpha_1}_{\beta}u_2(s)\right\|^2\right)ds.\nonumber
    \end{eqnarray}
Here we have used \eqref{induc.-Assu.2} for $|\alpha|+|\beta|\leq
N-2$:
\begin{equation*}
   \sum_{|\beta_1|<|\beta|\atop |\alpha|+|\beta|=N-1}\int_0^t(1+s)^{r_{|\alpha|}+\vartheta}
    \left\|w_{|\beta_1|-\ell}\partial^{\alpha}_{\beta_1}u_2(s)\right\|^2_{\nu}ds
    \lesssim(1+t)^{\vartheta}\left(\epsilon_0+\CE^2_{\infty}(t)\right)
\end{equation*}
 and
\begin{eqnarray*}
   &&\sum_{|\alpha_1|<|\alpha|-1\atop |\alpha|+|\beta|=N-1}\int_0^t(1+s)^{-(|\alpha-\alpha_1|+2)
   +(1+\vartheta)\frac{1-\gamma}{2-\gamma}+r_{|\alpha|}+\vartheta}\left\|\langle\xi\rangle w_{|\beta+e_i|-\ell}\nabla_{\xi}\partial^{\alpha_1}_{\beta}u_2(s)\right\|^2ds\\
   &&+\sum_{|\alpha_1|\leq|\alpha|-1\atop |\alpha|+|\beta|=N-1}\int_0^t(1+s)^{-(|\alpha-\alpha_1|+2)
   +(1+\vartheta)\frac{1-\gamma}{2-\gamma}+r_{|\alpha|}+\vartheta}\left\|\langle\xi\rangle^{\frac{1}{2}} w_{|\beta|-\ell}\partial^{\alpha_1}_{\beta}u_2(s)\right\|^2ds\\
   &\lesssim&(1+t)^{\vartheta}\left(\epsilon_0+\CE^2_{\infty}(t)\right),
  \end{eqnarray*}
since
\begin{equation}\label{key2}
    -(|\alpha-\alpha_1|+2)+(1+\vartheta)\frac{1-\gamma}{2-\gamma}+r_{|\alpha|}+\vartheta
    \leq r_{|\alpha_1|}-1
\end{equation}
holds for $0<\vartheta\leq\frac{1}{9}$ and  $-3<\gamma<0$.

Note that
\begin{eqnarray}\label{key}
   &&(1+t)^{r_{|\alpha|}+\vartheta}
   \left\|w_{|\beta-e_i|-\ell}\partial^{\alpha+e_i}_{\beta-e_i}u_2(t)\right\|^2\nonumber\\
   &\lesssim& (1+t)^{r_{|\alpha+e_i|}-1}\left\|\langle\xi\rangle w_{|\beta-e_i|-\ell}
   \partial^{\alpha+e_i}_{\beta-e_i}u_2(t)\right\|^2+(1+t)^{r_{|\alpha+e_i|}+\vartheta}
   \left\|w_{|\beta-e_i|-\ell}\partial^{\alpha+e_i}_{\beta-e_i}u_2(t)\right\|_{\nu}^2,
 \end{eqnarray}
 for $|\alpha|+|\beta|=N-1,~|\beta|\geq1$, we can take a suitable linear combination of \eqref{h.-decay1} for each order of $|\alpha|$ to obtain
\begin{eqnarray}\label{h.-decay2}
   &&\sum_{|\alpha|=0}^{N-2}\sum\limits_{|\alpha|+|\beta|=N-1}
   \widetilde{M}_{|\alpha|}(1+t)^{{r_{|\alpha|}-\frac{1}{2}+\vartheta}}
   \left\|w_{|\beta|-\ell}\partial^{\alpha}_{\beta}u_2(t)\right\|^2\nonumber\\
   &&+\sum_{|\alpha|=0}^{N-2}\sum\limits_{|\alpha|+|\beta|=N-1}
   \int_0^t\left((1+s)^{r_{|\alpha|}-\frac{3}{2}}\left\|\langle\xi\rangle w_{|\beta|-\ell}\partial^{\alpha}_{\beta}u_2(s)\right\|^2
   +(1+s)^{r_{|\alpha|}-\frac{1}{2}+\vartheta}
   \left\|w_{|\beta|-\ell}\partial^{\alpha}_{\beta}u_2(s)\right\|^2_{\nu}\right)ds\nonumber\\
   &\lesssim&\sum_{|\alpha|=N-1}\int_0^t(1+s)^{r_{|\alpha|}-1}\left\|\langle\xi\rangle w_{-\ell}
   \partial^{\alpha}u_2(s)\right\|^2+(1+s)^{r_{|\alpha|}+\vartheta}
   \left\|w_{-\ell}\partial^{\alpha}u_2(s)\right\|_{\nu}^2ds\nonumber\\
   &&+\sum_{|\alpha|=N-2\atop |\alpha|+|\beta|=N-1}\int_0^t\left((1+s)^{r_{|\alpha+e_i|}-\frac{3}{2}}
   \left\|\langle\xi\rangle w_{-\ell}\partial^{\alpha+e_i}u_2(s)\right\|^2+(1+s)^{r_{|\alpha+e_i|}-\frac{1}{2}+\vartheta}
   \left\|w_{-\ell}\partial^{\alpha+e_i}u_2(s)\right\|_{\nu}^2\right)ds\\
   &&+\CE_{\infty}(t)\sum_{|\alpha|=1}^{N-2}\sum_{|\alpha_1|=|\alpha|-1\atop |\alpha|+|\beta|=N-1}\int_0^t(1+s)^{-(|\alpha-\alpha_1|+2)
   +(1+\vartheta)\frac{1-\gamma}{2-\gamma}+r_{|\alpha|}+\vartheta}\left\|\langle\xi\rangle w_{|\beta+e_i|-\ell}
   \nabla_{\xi}\partial^{\alpha_1}_{\beta}u_2(s)\right\|^2ds\nonumber\\
   &&+(1+t)^{\vartheta}\left(\epsilon_0+\CE^2_{\infty}(t)\right)\nonumber.
 \end{eqnarray}

Similarly, for $|\alpha|+|\beta|=N,~|\beta|\geq1$, by noticing that \eqref{key} holds also for our specially chosen of $r_{|\alpha|}$, we can deduce that
\begin{eqnarray}\label{NN}
   &&\sum_{|\alpha|=0}^{N-1}\sum_{|\alpha|+|\beta|=N}\widetilde{M}_{|\alpha|}\left\{(1+t)^{r_{|\alpha|}+\vartheta}
   \left\|w_{|\beta|-\ell}\partial^{\alpha}_{\beta}u_2(t)\right\|^2\right.\nonumber\\
   &&\left.+\int_0^t\left((1+s)^{r_{|\alpha|}-1}\left\|\langle\xi\rangle w_{|\beta|-\ell}\partial^{\alpha}_{\beta}u_2(s)\right\|^2+(1+s)^{r_{|\alpha|}+\theta}
   \left\|w_{|\beta|-\ell}\partial^{\alpha}_{\beta}u_2(s)\right\|^2_{\nu}\right)ds\right\}\nonumber\\
   &\lesssim&\sum_{|\alpha|=N-1\atop |\alpha|+|\beta|=N}\int_0^t\left((1+s)^{r_{|\alpha+e_i|}-1}
   \left\|\langle\xi\rangle w_{-\ell}\partial^{\alpha+e_i}u_2(s)\right\|^2+(1+s)^{r_{|\alpha+e_i|}+\vartheta}
   \left\|w_{-\ell}\partial^{\alpha+e_i}u_2(s)\right\|_{\nu}^2\right)ds\nonumber\\
   &&+\CE_{\infty}(t)\sum_{|\alpha|+|\beta|=N}
   \int_0^t(1+s)^{-(|\alpha-\alpha_1|+2)+(1+\vartheta)\frac{1-\gamma}{2-\gamma}+r_{|\alpha|}+\vartheta}
   \left(\sum_{|\alpha|=2}^{N-1}\sum_{|\alpha_1|=|\alpha|-2}^{|\alpha|-1}\left\|\langle\xi\rangle w_{|\beta+e_i|-\ell}
   \nabla_{\xi}\partial^{\alpha_1}_{\beta}u_2(s)\right\|^2\right.\nonumber\\
   &&\left.+\sum_{|\alpha|=1}^{N-1}\sum_{|\alpha_1|=|\alpha|-1}
   \left\|\langle\xi\rangle^{\frac{1}{2}}w_{|\beta|-\ell}\partial^{\alpha_1}_{\beta}u_2(s)\right\|^2\right)ds
   +(1+t)^{\vartheta}\left(\epsilon_0+\CE^2_{\infty}(t)\right)\\
   &&+\sum_{|\alpha|=0}^{N-1}\sum_{|\alpha|+|\beta|=N}\int_0^t(1+s)^{r_{|\alpha|}+\vartheta}
   \left\|w_{|\beta-e_i|-\ell}\partial^{\alpha}_{\beta-e_i}u_2(s)\right\|_{\nu}^2ds\nonumber\\
   &&+\chi_{|\alpha|=N-1}\int_0^t(1+s)^{r_{|\alpha|}+\vartheta}\left\|\nabla_x^Nu(s)\right\|_{\nu}^2ds.\nonumber
 \end{eqnarray}

Finally, by taking a suitable linear combination as $[\eqref{h.-decay2}\times M_1+\eqref{NN}]+\eqref{decay-high3}\times M_2$ for some suitably large positive constants $M_1\ll M_2$, we can get from the smallness of $\CE_{\infty}(t)$ and the fact \eqref{key2} that
\begin{lemma}\label{high-oder-estimates} Under the assumptions listed in Lemma \ref{lower_order_microscopic}, we can deduce that
\begin{eqnarray}\label{N-1=N}
   &&\sum_{k=N-1}^{N}\left\{\sum_{|\alpha|+|\beta|=k \atop|\beta|\geq1}(1+t)^{r_{|\alpha|}+\vartheta}
   \left\|w_{|\beta|-\ell}\partial^{\alpha}_{\beta}u_2(t)\right\|^2\right.\nonumber\\
   &&+(1+t)^{N-\tfrac{1}{2}+\vartheta}\left[\left\|w_{-\ell}\nabla_x^{N-1}u_2\right\|^2
   +\left\|w_{-\ell}\nabla_x^Nu\right\|^2+\left\|\nabla^{N+1}_x\phi\right\|^2\right]\\
   &&\left.+\sum_{|\alpha|+|\beta|=k}\int_0^t\left((1+s)^{r_{|\alpha|}-1}\left\|\langle\xi\rangle w_{|\beta|-\ell}\partial^{\alpha}_{\beta}u_2(s)\right\|^2
   +(1+s)^{r_{|\alpha|}+\vartheta}
   \left\|w_{|\beta|-\ell}\partial^{\alpha}_{\beta}u_2(s)\right\|^2_{\nu}\right)ds\right\}\nonumber\\
   &\lesssim&(1+t)^{\vartheta}\left(\epsilon_0+\CE^2_{\infty}(t)\right)\nonumber
  \end{eqnarray}
holds for ant $0\leq t\leq T$.
\end{lemma}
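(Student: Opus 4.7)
The plan is to assemble Lemma \ref{high-oder-estimates} from the three building blocks already in place: the weighted pure-spatial estimate at the top orders \eqref{decay-high3}, the mixed-derivative estimate at order $N-1$ \eqref{h.-decay2}, and the mixed-derivative estimate at the highest order $N$ \eqref{NN}. I would form the linear combination $M_2\cdot\eqref{decay-high3}+M_1\cdot\eqref{h.-decay2}+\eqref{NN}$ with constants chosen so that $1\ll M_1\ll M_2$. The hierarchy is forced by the structure of the right-hand sides: \eqref{h.-decay2} and \eqref{NN} each produce a pure-spatial top-order dissipation term of the form $\int_0^t(1+s)^{r_{|\alpha+e_i|}+\vartheta}\|w_{-\ell}\partial^{\alpha+e_i}u_2(s)\|_\nu^2 ds$ together with a $\chi_{|\alpha|=N-1}\int_0^t(1+s)^{r_{|\alpha|}+\vartheta}\|\nabla_x^N u(s)\|_\nu^2 ds$ contribution, and exactly these quantities live on the \emph{left}-hand side of \eqref{decay-high3}; taking $M_2$ large lets them be absorbed.

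The interior step is an induction on $|\beta|$ to kill the tail term $(1+s)^{r_{|\alpha|}+\vartheta}\|w_{|\beta-e_i|-\ell}\partial^{\alpha+e_i}_{\beta-e_i}u_2\|^2$ that appears on the right of \eqref{h.-decay0}. The device is the interpolation \eqref{key}, which splits this into an $\langle\xi\rangle$-weighted piece at rate $r_{|\alpha+e_i|}-1$ and a $\nu$-weighted piece at rate $r_{|\alpha+e_i|}+\vartheta$; both are precisely the quantities appearing on the \emph{left}-hand side of the same estimate but at velocity order $|\beta|-1$ and spatial order $|\alpha|+1$. Iterating from $|\beta|=1$ upward through $|\beta|=N-|\alpha|$ closes the velocity induction, provided the specially designed rates \eqref{decay_rates} satisfy the compatibility $r_{|\alpha|}\le r_{|\alpha+e_i|}-1+\vartheta$ for each admissible transition on $U_{\alpha,\beta}^{\mathrm{low}}\cup U_{\alpha,\beta}^{\mathrm{high}}$. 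This is the \emph{only} reason for assigning the boosted rate $\tfrac{N+2|\alpha|}{3}-\tfrac12$ on $U_{\alpha,\beta}^{\mathrm{high}}$: the naive choice $r_{|\alpha|}=|\alpha|+\tfrac12$ would fail precisely at the corner $|\alpha|+|\beta|=N$, $|\alpha|\ge N-2$ because of the gap produced by Lemma \ref{high-oder-estimates} giving only $\|w_{-\ell}\nabla_x^N u\|\lesssim(1+t)^{-(N-1/2)/2+\ldots}$ rather than $-(N+1/2)/2$.

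The electrostatic nonlinear terms on the right of \eqref{h.-decay2}-\eqref{NN}, carrying the factor $(1+s)^{-(|\alpha-\alpha_1|+2)+(1+\vartheta)\frac{1-\gamma}{2-\gamma}+r_{|\alpha|}+\vartheta}$, are absorbed via the algebraic inequality \eqref{key2}, namely
\begin{equation*}
-(|\alpha-\alpha_1|+2)+(1+\vartheta)\tfrac{1-\gamma}{2-\gamma}+r_{|\alpha|}+\vartheta\ \le\ r_{|\alpha_1|}-1,
\end{equation*}
valid for $0<\vartheta\le\tfrac19$ and $-3<\gamma<0$. Under this exponent bound each such term is dominated by the already-controlled dissipation at lower spatial order $|\alpha_1|$ (either from the induction hypothesis \eqref{induc.-Assu.2} or from the lower-order piece of the combined estimate being built), multiplied by the small prefactor $\mathcal{E}_\infty(t)\le\delta$, and thus moved to the left after a standard smallness argument.

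The main obstacle I anticipate is the two-way coupling between \eqref{decay-high3} and the mixed-block \eqref{h.-decay2}-\eqref{NN}. The right-hand side of \eqref{decay-high3} contains $\|\langle\xi\rangle w_{1-\ell}\nabla_\xi\nabla_x^{N-1}u_2\|^2$, which is a genuinely mixed quantity controlled only through \eqref{NN}; conversely, \eqref{NN} leaks the pure-spatial top-order dissipation that only \eqref{decay-high3} supplies. Closing this feedback loop is what forces the ordering $1\ll M_1\ll M_2$ together with the special top-range rates $r_{|\alpha|}$ on $U_{\alpha,\beta}^{\mathrm{high}}$, and requires the smallness parameter $\delta$ in the a priori assumption \eqref{a priori assumption} to be reduced once more so that all residual $\mathcal{E}_\infty(t)$ factors on the right are absorbed, yielding the claimed bound \eqref{N-1=N}.
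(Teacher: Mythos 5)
Your overall assembly is the paper's own: the combination $M_1\cdot\eqref{h.-decay2}+\eqref{NN}+M_2\cdot\eqref{decay-high3}$ with $M_1\ll M_2$, the absorption of the pure-spatial top-order leakage of the mixed blocks into the left-hand side of \eqref{decay-high3}, the descent in $|\beta|$ via \eqref{key}, and the treatment of the electrostatic terms via \eqref{key2} together with the smallness of $\CE_{\infty}(t)$ (with $\delta$ shrunk after $M_1,M_2$ are fixed). The genuine problem is the compatibility condition you invoke to justify \eqref{key}, namely $r_{|\alpha|}\le r_{|\alpha+e_i|}-1+\vartheta$. This inequality is \emph{false} exactly on the transitions for which the special rates were designed: inside $U_{\alpha,\beta}^{\text{high}}$ one has $r_{|\alpha+e_i|}-r_{|\alpha|}=\tfrac23$, and at the junction $|\alpha|=N-3$, $|\alpha|+|\beta|=N$ one likewise has $r_{N-2}-r_{N-3}=\bigl(N-\tfrac{11}{6}\bigr)-\bigl(N-\tfrac52\bigr)=\tfrac23$, so your condition would force $\vartheta\ge\tfrac13$, contradicting the standing hypothesis $0<\vartheta\le\tfrac19$. (Relatedly, the rates on $U_{\alpha,\beta}^{\text{high}}$ are not boosted but \emph{lowered} relative to $|\alpha|+\tfrac12$, a concession to the fact that $\|w_{-\ell}\nabla_x^Nu\|^2$ can only be shown to decay like $(1+t)^{-(N-\frac12)}$.) Indeed, if your condition were what \eqref{key} requires, no admissible assignment of rates could close the scheme: the three steps from $(|\alpha|,|\beta|)=(N-3,3)$, whose rate $N-\tfrac52$ is pinned by \eqref{decay_rates}, up to $(N,0)$ would force $r_N\ge N+\tfrac12-3\vartheta>N-\tfrac12$, beyond what \eqref{decay-high3} supplies.

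What actually makes \eqref{key} hold with increments of only $\tfrac23$ is the flexibility of the interpolation in Lemma \ref{in.}: write $r_{|\alpha|}+\vartheta=\bigl(r_{|\alpha+e_i|}-1\bigr)+\bigl(\tfrac13+\vartheta\bigr)$ and apply the lemma with $m=r_{|\alpha+e_i|}-1$ and $\vartheta$ replaced by $\tfrac13+\vartheta$; the $\langle\xi\rangle$-weighted piece then lands exactly at the rate $r_{|\alpha+e_i|}-1$, while the $\nu$-weighted piece lands at $r_{|\alpha+e_i|}-1+\tfrac{2-\gamma}{2}\bigl(\tfrac13+\vartheta\bigr)$, which is $\le r_{|\alpha+e_i|}+\vartheta$ if and only if $\vartheta\le\frac{4+\gamma}{-3\gamma}$; since $\frac{4+\gamma}{-3\gamma}\ge\tfrac19$ for $-3<\gamma<0$, the hypothesis $\vartheta\le\tfrac19$ suffices. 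This is precisely the content of Remark \ref{DECAY-enough}, and it—rather than any one-step monotonicity of the rates—is the reason the specially chosen $r_{|\alpha|}$ in \eqref{decay_rates} allow the linear combinations in \eqref{h.-decay2} and \eqref{NN} to close. With this correction, the rest of your argument goes through as in the paper.
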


\begin{remark}\label{DECAY-enough} We want to emphasize here that the very reason why we assign the special temporal decay rates $r_{|\alpha|}$ to certain $L^2-$norm of $\partial^\alpha_\beta u$ as in \eqref{decay_rates} is to guarantee that the estimate \eqref{key} holds for each pair of multiindex $(\alpha,\beta)$ satisfying $|\alpha|+|\beta|\leq N$, especially for the case when $(\alpha,\beta)\in U_{\alpha,\beta}^{\text{high}}$, so that the linear combinations performed in \eqref{h.-decay2} and \eqref{NN} work well.

To show that the estimate \eqref{key} holds for each pair of multiindex $(\alpha,\beta)$ satisfying $|\alpha|+|\beta|\leq N$, we only need to consider the following three subcases:
\begin{itemize}
\item [{\bf Case I:}] For the case when all the temporal decay rates on the involving terms $\partial^\alpha_\beta u$ are given by $r_{|\alpha|}=|\alpha|+\frac{1}{2}$, it
is easy to see that \eqref{key} is true by Lemma \ref{in.}.
\item [{\bf Case II:}] If the corresponding rates on the involving terms $\partial^\alpha_\beta u$ are given by
$r_{|\alpha|}=\frac{N+2|\alpha|}{3}-\frac{1}{2}$, noticing that $\frac{N+2|\alpha|}{3}-\frac{1}{2}+\vartheta=\left(\frac{N+2|\alpha+e_i|}{3}-\frac{3}{2}\right)
+\left(\frac{1}{3}+\vartheta\right)$,
we have from Lemma \ref{in.} that
\begin{eqnarray*}
   &&(1+t)^{\frac{N+2|\alpha|}{3}-\frac{1}{2}+\vartheta}
   \left\|w_{|\beta-e_i|-\ell}\partial^{\alpha+e_i}_{\beta-e_i}u_2(t)\right\|^2\\
   &\lesssim& (1+t)^{\frac{N+2|\alpha+e_i|}{3}-\frac{3}{2}}\left\|\langle\xi\rangle w_{|\beta-e_i|-\ell}
   \partial^{\alpha+e_i}_{\beta-e_i}u_2(t)\right\|^2+(1+t)^{\frac{N+2|\alpha|}{3}-\frac{5}{6}
   +\frac{2-\gamma}{2}(\frac{1}{3}+\vartheta)}
   \left\|w_{|\beta-e_i|-\ell}\partial^{\alpha+e_i}_{\beta-e_i}u_2(t)\right\|_{\nu}^2\\
   &\lesssim& (1+t)^{\frac{N+2|\alpha+e_i|}{3}-\frac{3}{2}}\left\|\langle\xi\rangle w_{|\beta-e_i|-\ell}
   \partial^{\alpha+e_i}_{\beta-e_i}u_2(t)\right\|^2+(1+t)^{\frac{N+2|\alpha+e_i|}{3}-\frac{1}{2}+\vartheta}
   \left\|w_{|\beta-e_i|-\ell}\partial^{\alpha+e_i}_{\beta-e_i}u_2(t)\right\|_{\nu}^2,
 \end{eqnarray*}
where we have used the facts that for $-3<\gamma<0$
\begin{equation*}
  \frac{N+2|\alpha|}{3}-\frac{5}{6}+\frac{2-\gamma}{2}\left(\frac{1}{3}+\vartheta\right)
  \leq\frac{N+2|\alpha+e_i|}{3}-\frac{1}{2}+\vartheta,\quad 0<\vartheta\leq \frac 19,\quad \frac{4+\gamma}{-3\gamma}\geq\frac{1}{9}.
\end{equation*}
\item[{\bf Case III:}] The last case we need to deal with is $|\alpha|+|\beta|=N,~|\alpha|=N-3$. For such a case, noticing our definition of $r_{|\alpha|}$ given by \eqref{decay_rates}, one can deduce from Lemma \ref{in.} that
\begin{eqnarray*}
   &&(1+t)^{\left(N-3+\frac{1}{2}\right)+\vartheta}\left\|w_{|\beta-e_i|-\ell}\partial^{\alpha+e_i}_{\beta-e_i}u_2(t)\right\|^2\\
   &\lesssim& (1+t)^{(N-2+\frac{1}{2}-\frac{1}{3})-1}\left\|\langle\xi\rangle w_{|\beta-e_i|-\ell}
   \partial^{\alpha+e_i}_{\beta-e_i}u_2(t)\right\|^2+(1+t)^{N-2-\frac{5}{6}+\frac{2-\gamma}{2}\left(\frac{1}{3}+\vartheta\right)}
   \left\|w_{|\beta-e_i|-\ell}\partial^{\alpha+e_i}_{\beta-e_i}u_2(t)\right\|_{\nu}^2\\
   &\lesssim& (1+t)^{r_{N-2}-1}\left\|\langle\xi\rangle w_{|\beta-e_i|-\ell}
   \partial^{\alpha+e_i}_{\beta-e_i}u_2(t)\right\|^2+(1+t)^{r_{N-2}+\vartheta}
   \left\|w_{|\beta-e_i|-\ell}\partial^{\alpha+e_i}_{\beta-e_i}u_2(t)\right\|_{\nu}^2,
 \end{eqnarray*}
since by writing $\left(N-3+\frac{1}{2}\right)+\vartheta=(r_{N-2}-1)+\left(\frac{1}{3}+\vartheta\right)$, we can verify that
\begin{equation*}
  r_{N-2}-\frac{3}{2}+\frac{2-\gamma}{2}\left(\frac{1}{3}+\vartheta\right)
  \leq r_{N-2}-\frac{1}{2}+\vartheta,
\end{equation*}
when $\vartheta\leq\frac{4+\gamma}{-3\gamma}$. Since $\frac{4+\gamma}{-3\gamma}\geq \frac{1}{9}$ for $-3<\gamma<0$, it is sufficient to choose $0<\vartheta\leq \frac{1}{9}$.
\end{itemize}
Putting the above three subcases together yield the estimate \eqref{key}.
\end{remark}

\section{The Proof of Theorem \ref{Thm}}
This section is devoted to proving our main result Theorem \ref{Thm}. For this purpose, as pointed out in the very beginning of Section 3, the local solvability of the Cauchy problem of the one-species VPB system \eqref{u} for the whole range of cutoff intermolecular interactions can be obtained by employing the argument used in \cite{Guo-CPAM-2002} for the one-species VPB system \eqref{u} in a periodic box for hard sphere model and in \cite{Guo-ARMA-2003} for the Boltzmann equation in a periodic box for cutoff soft potentials. Now assume that such a local solution $u(t,x,\xi)$ has been extended to the time step $t=T$ for some $T>0$, that is, $u(t,x,\xi)$ is a solution of the Cauchy problem of the one-species VPB system \eqref{u} defined on the strip $\prod_T=[0,T]\times{\mathbb{R}_x^3}\times{\mathbb{R}_\xi^3}$, then the energy type estimates performed in Sections 3 and 4 tells us that if $u(t,x,\xi)$ satisfies the a priori assumption \eqref{a priori assumption}
for some sufficiently small positive constant $\delta>0$ and all $0\leq t\leq T$, then the estimates obtained in Lemma \ref{non-weight}, Lemma \ref{lower_order_microscopic}, Lemma \ref{high-oder-estimates}, \eqref{A-u2-opt1.}, and \eqref{A-u2-opt2.} tell us that
\begin{equation}\label{5.2}
  \CE_{\infty}(t)\lesssim\epsilon_0+\CE^2_{\infty}(t),\quad 0\leq t\leq T.
\end{equation}
The estimate \eqref{5.2} together with the assumption that $\epsilon_0>0$ is chosen sufficiently small tell us that
\begin{equation}\label{5.3}
  \CE_{\infty}(t)\lesssim\epsilon_0
\end{equation}
holds for all $0\leq t\leq T$. It is worth to emphasis that, on the one hand, the estimate \eqref{5.3} yields a time-independent estimate on the solution $u(t,x,\xi)$ for all $(t,x,\xi)\in\prod_T$ and, on the other hand, can be used to close the a priori assumption \eqref{a priori assumption}.

Having obtained \eqref{5.3}, the global existence and uniqueness of solution $u(t,x,\xi)$ to the Cauchy problem of the one-species VPB system \eqref{u} follows by a standard continuation argument as in \cite{Guo-CPAM-2002,Guo-JAMS-2012}, so we omit the details for brevity. This completes the proof of Theorem \ref{Thm}.

\appendix
\section{Almost optimal temporal decay for microscopic terms}
The main purpose of this section is to deduce the almost optimal temporal decay estimates on the $L^2-$norm of the lower order spatial derivatives of microscopic parts of $u$, that is
\begin{lemma}
  Take $|\alpha|\leq N-2$, $\frac{3}{4}<p<1$. Let $u(t,x,\xi)$ be a solution of the Cauchy problem of the one-species VPB system \eqref{u} and  $u_2$ be its  microscopic part.
  Then, under the a priori assumption (\ref{a priori assumption}), we have the following almost optimal temporal decay estimates
   \begin{equation}\label{A-u2-opt1.}
     \left\|\partial^{\alpha}u_2(t)\right\|^2\lesssim (1+t)^{-\left(|\alpha|+\frac{3}{2}\right)+1-p}\left(\epsilon_0+\CE^2_{\infty}(t)\right).
   \end{equation}
   Similarly, for $|\alpha|=N-2,~|\beta|=1$, we have the following estimate as well
   \begin{equation}\label{A-u2-opt2.}
     \left\|\nabla^{N-2}_x\nabla_{\xi}u_2(t)\right\|^2\lesssim (1+t)^{-\left(N-\frac{1}{2}\right)+2(1-p)}\left(\epsilon_0+\CE^2_{\infty}(t)\right).
   \end{equation}
   Moreover, for $l_2>N+\frac{1}{2}$ and $\ell\geq (1+\frac{1}{4p-3})l_2$, we have
  \begin{equation}\label{A-u2-wei2.}
     \left\|\langle\xi\rangle^{-\frac{\gamma}{2}l_2}\nabla^{N-2}_x\nabla_{\xi}u_2(t)\right\|^2\lesssim (1+t)^{-N+1}\left(\epsilon_0+\CE^2_{\infty}(t)\right).
   \end{equation}
\end{lemma}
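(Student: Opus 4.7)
My plan is to prove the three bounds by the time-velocity splitting method of \cite{Strain-KRM-2012, Duan-Liu-CMP-2013}, combined with the weighted estimates of Lemma \ref{lower_order_microscopic} and Lemma \ref{high-oder-estimates}; the last estimate \eqref{A-u2-wei2.} will then follow from \eqref{A-u2-opt2.} by interpolation. For \eqref{A-u2-opt1.}, proceed by induction on $|\alpha|$. Apply $\partial^\alpha$ to the equation \eqref{u2} for $u_2$ and test against $\partial^\alpha u_2$; the resulting identity, using coercivity \eqref{coercive}, Lemma \ref{Gamma} for the nonlinear term, Lemma \ref{non-LINEAR} for the electrostatic terms, and Lemma \ref{comm.-est.} for the commutator, yields
\begin{equation*}
\frac{d}{dt}\|\partial^\alpha u_2\|^2 + \lambda\|\partial^\alpha u_2\|_\nu^2 \lesssim \mathcal{R}_\alpha(t),
\end{equation*}
where $\mathcal{R}_\alpha(t)$ already decays at a rate dictated by the inductive hypothesis together with Lemma \ref{non-weight}. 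Multiplying by $(1+t)^{|\alpha|+\frac{1}{2}+p}$, the only dangerous remainder is $(1+t)^{|\alpha|-\frac{1}{2}+p}\|\partial^\alpha u_2\|^2$, which I treat by the time-dependent velocity cutoff $R(t)=(1+t)^{\sigma}$ with $\sigma=(1-p)/2$. On $\{|\xi|\leq R(t)\}$ the inequality $\nu(\xi)\geq\langle R(t)\rangle^\gamma$ (valid since $\gamma<0$) gives the bound $\langle R(t)\rangle^{-\gamma}\|\partial^\alpha u_2\|_\nu^2$, which is absorbed into the dissipation provided $\sigma<-1/\gamma$. On $\{|\xi|>R(t)\}$ the inequality $1\leq R(t)^{-2}\langle\xi\rangle^2$ produces $R(t)^{-2}\|\langle\xi\rangle w_{-\ell}\partial^\alpha u_2\|^2$, integrable in time thanks to the extra dissipation already secured by Lemma \ref{lower_order_microscopic}. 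A Gronwall-type closure then produces the advertised rate $(1+t)^{-(|\alpha|+\frac{3}{2})+(1-p)}$ for every $|\alpha|\leq N-2$.

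For \eqref{A-u2-opt2.}, I apply $\partial^\alpha\partial_{e_i}$ (with $|\alpha|=N-2$) to \eqref{u2} and test against $\partial^\alpha\partial_{e_i}u_2$. The commutator $[\partial_{e_i},\xi\cdot\nabla_x]=\partial_{x_i}$ introduces an inhomogeneity of the form $\partial^{\alpha+e_i}u_2$ whose sharpest available decay is the $|\alpha|=N-1$ case of \eqref{A-u2-opt1.}, namely $(1+t)^{-(N-\frac{1}{2})+(1-p)}$; this is precisely the mechanism that forces the $2(1-p)$ loss appearing in \eqref{A-u2-opt2.}. Applying the same splitting as above with $R(t)=(1+t)^{(1-p)/2}$ and time weight $(1+t)^{N-\frac{1}{2}-2(1-p)}$, together with the bounds on the mixed-derivative weighted norm supplied by Lemma \ref{high-oder-estimates}, closes the estimate and yields \eqref{A-u2-opt2.}.

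For \eqref{A-u2-wei2.}, the strategy is simply to interpolate between the weighted bound of Lemma \ref{high-oder-estimates}, which provides
\begin{equation*}
\left\|w_{1-\ell}\nabla_x^{N-2}\nabla_\xi u_2(t)\right\|^2 \lesssim (1+t)^{-(N-\frac{3}{2})}\left(\epsilon_0+\CE_\infty^2(t)\right),
\end{equation*}
and the unweighted bound \eqref{A-u2-opt2.} just proved. H\"older's inequality produces
\begin{equation*}
\left\|\langle\xi\rangle^{-\frac{\gamma}{2}l_2}\nabla_x^{N-2}\nabla_\xi u_2\right\|^2 \lesssim \left\|w_{1-\ell}\nabla_x^{N-2}\nabla_\xi u_2\right\|^{2\theta}\left\|\nabla_x^{N-2}\nabla_\xi u_2\right\|^{2(1-\theta)},
\end{equation*}
with $\theta$ determined by the matching condition $-l_2=(1-\ell)\theta$, and a direct algebraic computation shows that the combined decay exponent equals $-(N-1)$ exactly when $\theta=(4p-3)/(4p-2)$, which under the prescribed lower bound $\ell\geq(1+\frac{1}{4p-3})l_2$ is always achievable. \textbf{The main obstacle} is the cascade driving \eqref{A-u2-opt2.}: the mixed-derivative estimate must absorb the $(1-p)$-loss already present in \eqref{A-u2-opt1.} at one higher spatial order, so the two bounds have to be proved in tandem with carefully coordinated time-weights; in addition, for very soft potentials $\gamma\in(-3,-2)$ one has to be careful that $\langle\xi\rangle w_{1-\ell}$ no longer dominates $w_{-\ell}$, which forces the interpolation weight to be kept at $w_{1-\ell}$ rather than $w_{-\ell}$ and costs one extra index in $\ell$.
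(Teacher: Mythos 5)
Your treatment of the high-velocity region in the proof of \eqref{A-u2-opt1.} does not close, and since \eqref{A-u2-opt2.} is run through the same scheme, the gap propagates. After multiplying the differential inequality $\frac{d}{dt}\|\partial^\alpha u_2\|^2+\lambda\|\partial^\alpha u_2\|_\nu^2\lesssim(1+t)^{-(|\alpha|+\frac32)}(\epsilon_0+\CE_\infty^2)$ by $(1+t)^{|\alpha|+\frac12+p}$, your dangerous term on $\{|\xi|>R(t)\}$ with $R(t)=(1+t)^{(1-p)/2}$ gains only $R(t)^{-2}=(1+t)^{-(1-p)}$ from two powers of $\langle\xi\rangle$, leaving $(1+t)^{|\alpha|-\frac12+(2p-1)}\|\langle\xi\rangle w_{-\ell}\partial^\alpha u_2\|^2$. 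But the "extra dissipation" of Lemma \ref{lower_order_microscopic} only controls $\int_0^t(1+s)^{|\alpha|-\frac12}\|\langle\xi\rangle w_{-\ell}\partial^\alpha u_2\|^2ds\lesssim(1+t)^{\vartheta}(\epsilon_0+\CE_\infty^2)$, and there is no pointwise-in-time bound on $\|\langle\xi\rangle w_{-\ell}\partial^\alpha u_2\|^2$ in the energy functional; since $2p-1>\tfrac12$, the extra factor $(1+s)^{2p-1}$ cannot be absorbed, and the best such a closure yields is a rate like $(1+t)^{-(|\alpha|+\frac32)+p+\vartheta}$, far weaker than the claimed loss $1-p$ (note $p>\tfrac34>1-p$). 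The missing idea is that on the high-velocity set one must extract the \emph{full} decay factor $(1+t)^{-(|\alpha|+\frac32)}$, not merely $(1+t)^{-(1-p)}$. The paper does this by splitting at $\langle\xi\rangle^{-\gamma}\lessgtr t^{1-p}$: on the complementary (low-velocity) set $\nu(\xi)\gtrsim t^{-(1-p)}$ turns the dissipation into an effective damping whose Gronwall factor is $e^{-\lambda(t^p-\tau^p)}$, and the loss $1-p$ then comes solely from $\int_0^te^{-\lambda(t^p-\tau^p)}(1+\tau)^{-m}d\tau\lesssim(1+t)^{-m+1-p}$; on $E^c(t)$ one writes $t^{|\alpha|+\frac32}=(t^{1-p})^{(|\alpha|+\frac32)/(1-p)}\lesssim\langle\xi\rangle^{-\frac{\gamma}{2}l_1}$ with $l_1=\frac{2(|\alpha|+3/2)}{1-p}$ and uses the boundedness of $\|w_{-l_1}\partial^\alpha u_2\|^2$, which is exactly why Theorem \ref{Thm} assumes $\ell\geq l_1=\frac{2N-1}{1-p}$ — a hypothesis your argument never invokes, which is the telltale sign that the high-velocity control is missing.

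Two smaller points. The induction on $|\alpha|$ is unnecessary: the forcing $\|\partial^\alpha\nabla_xu\|_\nu^2$ is controlled directly by Lemma \ref{non-weight} for every $|\alpha|\leq N-2$, which is also why \eqref{A-u2-opt1.} stops at $N-2$; consequently, in your argument for \eqref{A-u2-opt2.} you cannot appeal to "the $|\alpha|=N-1$ case of \eqref{A-u2-opt1.}", which does not exist. The paper instead bounds the transport-commutator term $\|\nabla_x^{N-1}u\|^2$ by Lemma \ref{non-weight} at the optimal rate $(1+t)^{-(N-\frac12)}$, and the slower forcing $(1+t)^{-(N-\frac12)+1-p}$ comes from $\|\nabla_x^{N-2}u_2\|_\nu^2$ via \eqref{A-u2-opt1.} at order $N-2$; the second $1-p$ is then lost when the time-velocity splitting is applied once more. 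Your final interpolation step for \eqref{A-u2-wei2.}, with $\theta=l_2/(\ell-1)\leq\frac{4p-3}{4p-2}$ and the weighted bound $\|w_{1-\ell}\nabla_x^{N-2}\nabla_\xi u_2\|^2\lesssim(1+t)^{-(N-\frac32)}$, coincides with the paper's computation and is correct, but of course it only delivers \eqref{A-u2-wei2.} once \eqref{A-u2-opt2.} has actually been established.
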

\begin{proof}
  By \eqref{0-u2} and \eqref{x-u2}, we can deduce for $|\alpha|\leq N-2$ that
  \begin{eqnarray}\label{A-u2-opt3.}
     \frac{d}{dt}\left\|\partial^{\alpha}u_2(t)\right\|^2+\left\|\partial^{\alpha}u_2\right\|^2_{\nu}
     &\lesssim & (1+t)^{-|\alpha|-\frac{3}{2}}\CE^2_{\infty}(t)+\left\|\partial^{\alpha}\nabla_xu\right\|_\nu^2
    +\left\|\nabla_x\phi(t)\right\|_{L^{\infty}}^{\frac{2-\gamma}{1-\gamma}}
    \left\|\langle\xi\rangle\partial^{\alpha}u_2\right\|^2\nonumber\\
    &\lesssim&(1+t)^{-|\alpha|-\frac{3}{2}}\left(\epsilon_0+\CE^2_{\infty}(t)\right)
    +(1+t)^{-\frac{5}{4}}\CE^{\frac{5}{8}}_{\infty}(t)
    (1+t)^{-|\alpha|-\frac{1}{2}}\CE^{\frac{1}{2}}_{\infty}(t)\\
    &\lesssim &(1+t)^{-|\alpha|-\frac{3}{2}}\left(\epsilon_0+\CE^2_{\infty}(t)\right).\nonumber
\end{eqnarray}
Here we have used the fact that $\left\|\partial^{\alpha}\nabla_xu\right\|_\nu^2\leq \left\|\partial^{\alpha}\nabla_xu\right\|^2$ and Lemma \ref{non-weight}.

As in \cite{Duan-Liu-CMP-2013, Strain-KRM-2012, Strain-Guo-ARMA-2008}, we split the velocity space $\R^3_{\xi}$ into two parts
\begin{equation*}
  E(t)=\{\langle\xi\rangle^{-\gamma}\leq t^{1-p}\},\quad
  E^c(t)=\{\langle\xi\rangle^{-\gamma}> t^{1-p}\},
\end{equation*}
where $\frac{3}{4}<p<1$.
Then it follows that
\begin{equation*}
     \frac{d}{dt}\left\|\partial^{\alpha}u_2(t)\right\|^2+\lambda t^{-1+p}\left\|\partial^{\alpha}u_2\right\|^2
    \lesssim (1+t)^{-|\alpha|-\frac{3}{2}}\left(\epsilon_0+\CE^2_{\infty}(t)\right)
    +t^{-1+p}\left\|\chi_{E^c(t)}\partial^{\alpha}u_2(t)\right\|^2,
\end{equation*}
which implies
\begin{eqnarray}\label{A-u2}
   \left\|\partial^{\alpha}u_2(t)\right\|^2&\lesssim& e^{-t^p}\left\|\partial^{\alpha}u_2(0)\right\|^2
   +\left(\epsilon_0+\CE^2_{\infty}(t)\right)\int_0^te^{-(t^p-\tau^p)}(1+\tau)^{-|\alpha|-\frac{3}{2}}d\tau\nonumber\\
   &&+\int_0^te^{-(t^p-\tau^p)}\tau^{-1+p}\left\|\chi_{E^c(\tau)}\partial^{\alpha}u_2(\tau)\right\|^2d\tau.
 \end{eqnarray}
Next we deal with the last two terms in the right hand side of the
above estimate. To do so, we first use (\ref{A-u2-opt3.}) to see
that
\begin{equation*}
  \left\|\partial^{\alpha}u_2(t)\right\|^2\lesssim\epsilon_0+\CE^2_{\infty}(t),
\end{equation*}
on the other hand, noticing that
\begin{equation*}
  t^{r_{|\alpha|}}=\left(t^{1-p}\right)^{\frac{|\alpha|+3/2}{1-p}}\lesssim
  \langle\xi\rangle^{-\frac{2(|\alpha|+3/2)}{1-p}\frac{\gamma}{2}},
\end{equation*}
on $E^c(t)$, we can get by choosing $l_1=\frac{2(|\alpha|+3/2)}{1-p}$ that
\begin{equation*}
  \left\|\chi_{E^c(t)}\partial^{\alpha}u_2(t)\right\|^2\lesssim t^{-|\alpha|-\frac{3}{2}}\|w_{-l_1}\partial^{\alpha}u_2(t)\|^2\lesssim t^{-|\alpha|-\frac{3}{2}}\epsilon_0.
\end{equation*}
By the combination of above two estimates, one has
\begin{equation}\label{A-u2-high}
  \left\|\chi_{E^c(t)}\partial^{\alpha}u_2(t)\right\|^2\lesssim (1+t)^{-|\alpha|-\frac{3}{2}}\left(\epsilon_0+\CE^2_{\infty}(t)\right).
\end{equation}
Substituting \eqref{A-u2-high} into \eqref{A-u2}, we finally get our desired estimates \eqref{A-u2-opt1.}.

For the proof of \eqref{A-u2-opt2.}, we can obtain by multiplying \eqref{mixed-equ.}
(with $|\alpha|=N-2,~|\beta|=1$) by
$\nabla^{N-2}_x\nabla_{\xi}u_2(t)$ and then integrating the resulting identity with respect to $x$ and $\xi$ over ${\mathbb{R}}^3\times{\mathbb{R}}^3$ that
\begin{eqnarray*}
  \frac{d}{dt}\left\|\nabla^{N-2}_x\nabla_{\xi}u_2(t)\right\|^2
  +\left\|\nabla^{N-2}_x\nabla_{\xi}u_2(t)\right\|^2_{\nu}
  &\lesssim&\left\|\nabla^{N-2}u_2\right\|^2_{\nu}+\left\|\nabla^{N-1}_x u\right\|^2+(1+t)^{-\left(N-\frac{1}{2}\right)}\left(\epsilon_0+\CE^2_{\infty}(t)\right)\\
  &\lesssim&(1+t)^{-\left(N-\frac{1}{2}\right)+1-p}\left(\epsilon_0+\CE^2_{\infty}(t)\right).
 \end{eqnarray*}
Here we have used
 \eqref{A-u2-opt1.} and Lemma \ref{non-weight}. Then the same argument used to deduce \eqref{A-u2-opt1.} yields \eqref{A-u2-opt2.}.

 Finally, we show that the estimate (\ref{A-u2-wei2.}) is true. By the definition of $\CE_{\infty}(t)$ and \eqref{A-u2-opt2.}, we use the H\"{o}lder inequality to obtain
\begin{eqnarray*}
\left\|\langle\xi\rangle^{-\frac{\gamma}{2}l_2}\nabla^{N-2}_x\nabla_{\xi}u_2\right\|^2
  &\lesssim&\left\|w_{1-\ell}\nabla^{N-2}\nabla_{\xi}u_2\right\|^{2\frac{l_2}{\ell-1}}
  \left\|\nabla^{N-2}\nabla_{\xi}u_2\right\|^{2(1-\frac{l_2}{\ell-1})}\\
  &\lesssim&(1+t)^{-\left(N-\frac{3}{2}\right)\frac{l_2}{\ell-1}-\left[\left(N-\frac{1}{2}\right)-2(1-p)\right]
  \left(1-\frac{l_2}{\ell-1}\right)}
  \left(\epsilon_0+\CE^2_{\infty}(t)\right)\\
   &\lesssim&(1+t)^{-N+1}\left(\epsilon_0+\CE^2_{\infty}(t)\right),
 \end{eqnarray*}
since $\frac{3}{4}<p<1$, $l_2>N+\frac{1}{2}$, and $\ell\geq \frac{4p-2}{4p-3}l_2+1$.
\end{proof}
We now state the temporal decay estimates on the solution operator of the linearized system related to \eqref{u} whose proof can be found in \cite{Xiao-Xiong-Zhao-JDE-2013}, cf. Lemma 2.5 of \cite{Xiao-Xiong-Zhao-JDE-2013}.
\begin{lemma}[\cite{Xiao-Xiong-Zhao-JDE-2013}]\label{lem.-decay} Let $-3<\gamma<0,~|\alpha|\geq0,~l_0\geq0,~l_2>|\alpha|+\frac{3}{2}$. Then, the evolution operator $e^{tB}$ satisfies
\begin{eqnarray*}
\left\|\langle\xi\rangle^{-\frac{\gamma}{2}l_0}\partial^{\alpha}e^{tB}u_0\right\|
+\left\|\partial^{\alpha}\nabla_x\triangle_x^{-1}{\bf P}_0e^{tB}u_0\right\|
\lesssim(1+t)^{-\frac{1}{2}\left(\frac{1}{2}+|\alpha|\right)}
\left(\left\|\langle\xi\rangle^{-\frac{\gamma}{2}(l_0+l_2)}\partial^{\alpha}u_0\right\|
+\left\|\langle\xi\rangle^{-\frac{\gamma}{2}(l_0+l_2)}u_0\right\|_{Z_1}\right)
\end{eqnarray*}
for any $t\geq0$. Moreover, it holds for some sufficiently small positive constant $\lambda>0$ that
\begin{equation*}
\partial_tE_{l_0}\left(\widehat{u}(t,k )\right)+\frac{\lambda |k|^2}{1+|k|^2}E_{l_0-1}\left(\widehat{u}(t,k )\right)\leq0,
\end{equation*}
where $E_{l_0}\left(\widehat{u}(t,k,\xi)\right)\sim \left|\langle\xi\rangle^{-\frac{\gamma}{2}l_0}
\widehat{u}(t,k)\right|^2_{L^2_{\xi}}+|k|^{-2}|\hat{a}|^2$ is a time-frequency functional and $\widehat{u}(t,k,\xi)=\mathcal {F}[u](t,x,\xi)$ denotes the Fourier transform of $f$ with respect to the variable
$x$ and $k$ denotes the corresponding frequency variable. Furthermore, if we use the notation $e^{(t-\tau)B}G(\tau,x,\xi)$ to denote the unique solution of the following Cauchy problem
\begin{equation*}
  \begin{cases}
  u_t-Bu=0,\\
  \Delta_x\phi={\displaystyle\int_{{\mathbb{ R}}^3}}{\bf M}^{1/2}ud\xi,\\
  u(\tau,x,\xi)=G(\tau,x,\xi), \quad 0\leq \tau<t,
  \end{cases}
\end{equation*}
then it holds for any $t>\tau>0$ and any $l_0\geq0$ that
\begin{equation}\label{G-decay}
\left|\langle\xi\rangle^{-\frac{\gamma}{2}l_0}e^{(t-\tau)B}G(\tau,k,\xi)\right|_{L_{\xi}^2}^2
=\left|\langle\xi\rangle^{-\frac{\gamma}{2}l_0}\widehat{u}(t,k,\xi)\right|^2_{L^2_{\xi}}
\lesssim\left\{1+\frac{|k|^2}{1+|k|^2}(t-\tau)\right\}^{-l_2}
\left|\langle\xi\rangle^{-\frac{\gamma}{2}(l_0+l_2)}\widehat{G}(\tau,k,\xi)\right|^2_{L^2_{\xi}}.
\end{equation}
Here $\widehat{G}(t,k,\xi)=\mathcal{F}\{G(t,x,\xi)\}$ denotes the Fourier transform of $G(t,x,\xi)$ with respect to the space variable $x$.
\end{lemma}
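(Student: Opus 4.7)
The plan is to work entirely in Fourier space with respect to $x$ and construct a suitable time-frequency Lyapunov functional $E_{l_0}(\hat{u}(t,k))$ of the indicated form. First, I would apply the Fourier transform to the linearized system $u_t = Bu$, $\Delta_x\phi = \int {\bf M}^{1/2}u\,d\xi$, yielding, for each frequency $k$, a closed evolution equation in which the Poisson piece becomes $-\rmi k\phi\cdot\xi{\bf M}^{1/2}$ with $|k|^2\hat\phi = -\hat a$. Since the $|k|^{-2}|\hat a|^2$ piece of $E_{l_0}$ reflects precisely the electrostatic energy, the natural base energy is $|\langle\xi\rangle^{-\frac{\gamma}{2}l_0}\hat u|_{L^2_\xi}^2 + |k|^{-2}|\hat a|^2$, augmented by small correctors of the thirteen-moment type used in Duan--Strain and Duan--Yang--Zhao to extract macroscopic dissipation.

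Next I would produce the differential inequality. Testing the linearized equation against $\langle\xi\rangle^{-\gamma l_0}\hat u$ and invoking the weighted coercivity of ${\bf L}$ from Lemma \ref{L-K} (taken at $|\beta|=0$) gives the microscopic dissipation $|\langle\xi\rangle^{-\frac{\gamma}{2}l_0}\hat u_2|_\nu^2$, equivalent to $|\langle\xi\rangle^{-\frac{\gamma}{2}(l_0-1)}\hat u_2|_{L^2_\xi}^2$. To recover dissipation of the macroscopic pieces $(\hat a,\hat b,\hat c)$ and $\hat\phi$, I would mimic the standard fluid-type argument: using the macroscopic equations for $(\hat a,\hat b,\hat c)$ together with the Poisson relation $\hat a=-|k|^2\hat\phi$, the corrector terms produce $\frac{|k|^2}{1+|k|^2}(|\hat b|^2+|\hat c|^2)$ and $\frac{1}{1+|k|^2}|\hat a|^2$ in the dissipation, where the factor $\frac{1}{1+|k|^2}$ accounts simultaneously for the Poisson contribution (which already gives control of $|\hat a|^2$ uniformly in $|k|$ via the $|k|^{-2}|\hat a|^2$ storage) and for the standard fluid dissipation at high frequencies. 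Assembling a suitable linear combination and observing that $|k|^2/(1+|k|^2)\leq 1$ then yields exactly the stated inequality
\[
\partial_tE_{l_0}(\hat u(t,k))+\frac{\lambda|k|^2}{1+|k|^2}E_{l_0-1}(\hat u(t,k))\leq 0.
\]

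From this pointwise-in-$k$ decay, the $L^2$ and $Z_1$ estimates follow by a familiar splitting. Iterating the differential inequality and using $E_{l_0-1}\geq c\langle\xi\rangle^\gamma E_{l_0}$ on a suitable set, or more directly interpolating with the $l_2$-unit loss of weight at each step, produces the frequency-localized bound $E_{l_0}(\hat u(t,k))\lesssim\bigl(1+\tfrac{|k|^2}{1+|k|^2}t\bigr)^{-l_2}E_{l_0+l_2}(\hat u_0(k))$, which is the kernel estimate \eqref{G-decay} (applied with $\tau=0$ and with $G$ replaced by $u_0$). Multiplying by $|k|^{2|\alpha|}$ and integrating in $k$, then splitting $\R^3_k=\{|k|\leq 1\}\cup\{|k|\geq 1\}$, on the low-frequency part one uses Hausdorff--Young $\|\hat g\|_{L^\infty_k}\lesssim\|g\|_{L^1_x}$ to pull out $\|\langle\xi\rangle^{-\frac{\gamma}{2}(l_0+l_2)}u_0\|_{Z_1}^2$ and computes $\int_{|k|\leq 1}|k|^{2|\alpha|}(1+|k|^2t)^{-l_2}dk\lesssim(1+t)^{-|\alpha|-\frac32}$ (which requires the assumption $l_2>|\alpha|+\frac32$), while on the high-frequency part the factor $\bigl(1+\tfrac{|k|^2}{1+|k|^2}t\bigr)^{-l_2}\leq(1+t/2)^{-l_2}$ gives faster decay and allows one to absorb the full $L^2$-norm of the data. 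This is how one arrives at the stated $(1+t)^{-(1/2+|\alpha|)/2}$ rate.

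The main obstacle is the coexistence of two competing degeneracies: the collisional dissipation $\nu(\xi)\sim\langle\xi\rangle^\gamma$ vanishes for large $|\xi|$ when $\gamma<0$, and the Poisson coupling degenerates as $|k|\to 0$. Balancing these requires that the $|k|^{-2}|\hat a|^2$ reservoir in $E_{l_0}$ be calibrated exactly so that (i) it absorbs the growth $|k|^{-1}$ coming from $\hat\phi$ in the term $\rmi k\hat\phi\cdot\xi{\bf M}^{1/2}$ inside $B$, and (ii) the corresponding dissipation produced by the corrector matches the prefactor $|k|^2/(1+|k|^2)$. A second, more technical difficulty is keeping track of the \emph{loss of one velocity weight per differential step} (i.e.\ the passage $E_{l_0}\rightsquigarrow E_{l_0-1}$): this is an artifact of the soft-potential weighted coercivity and forces the presence of the auxiliary index $l_2$ in the final estimate, which is unavoidable and must be tracked carefully through the interpolation used to close the argument.
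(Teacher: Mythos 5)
The paper does not prove this lemma at all: it is quoted verbatim from Lemma 2.5 of \cite{Xiao-Xiong-Zhao-JDE-2013}, so there is no in-paper argument to compare against. Your overall strategy — Fourier transform in $x$, a time-frequency Lyapunov functional containing the electrostatic reservoir $|k|^{-2}|\hat a|^2$, thirteen-moment correctors to extract macroscopic dissipation with the prefactor $|k|^2/(1+|k|^2)$, the one-weight loss $E_{l_0}\rightsquigarrow E_{l_0-1}$ from the soft-potential coercivity, an iteration/splitting in $l_2$ steps to reach the kernel bound \eqref{G-decay}, and finally a low/high frequency splitting with Hausdorff--Young to produce the $Z_1$ norm — is indeed the method of the cited reference, so the architecture of your proposal is sound.

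There is, however, a concrete gap in the final rate computation, and it sits exactly at the feature that distinguishes the one-species system. After inserting the kernel bound, the quantity you integrate at low frequency is not $|k|^{2|\alpha|}\bigl(1+|k|^2t\bigr)^{-l_2}$ times a bounded data factor: both the left-hand side (through $\|\partial^{\alpha}\nabla_x\Delta_x^{-1}{\bf P}_0u\|$, i.e. $|k|^{-1}|\hat a|$) and, more importantly, the data functional $E_{l_0+l_2}(\hat u_0(k))$ on the right carry the singular term $|k|^{-2}|\hat a_0(k)|^2$. The dominant low-frequency contribution is therefore
\[
\int_{|k|\leq 1}|k|^{2|\alpha|-2}\bigl(1+|k|^2t\bigr)^{-l_2}\,dk\;\lesssim\;(1+t)^{-\left(|\alpha|+\frac12\right)},
\]
which after taking square roots gives precisely the stated rate $(1+t)^{-\frac12\left(\frac12+|\alpha|\right)}$. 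Your computation $\int_{|k|\leq 1}|k|^{2|\alpha|}(1+|k|^2t)^{-l_2}dk\lesssim(1+t)^{-|\alpha|-\frac32}$ omits this $|k|^{-2}$ factor; it would yield the faster rate $(1+t)^{-\frac12\left(\frac32+|\alpha|\right)}$ typical of the pure Boltzmann or two-species setting, and in particular it does not, as written, ``arrive at the stated rate'' — the two exponents you display are inconsistent with each other. You correctly identify in your closing paragraph that the $|k|^{-2}|\hat a|^2$ reservoir must be calibrated against the low-frequency degeneracy of the Poisson coupling, but that calibration has to be carried through the decay arithmetic: it is exactly this extra $|k|^{-2}$ that slows the linearized one-species decay by one power of $(1+t)$ in squared norms, which is also why the paper's nonlinear analysis cannot rely on $\|(\partial_t\phi,\nabla_x\phi)\|_{L^\infty}\in L^1_t$ as in the two-species case. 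Fixing this amounts to redoing the low-frequency integral with the correct integrand (the constraint $l_2>|\alpha|+\frac32$ still suffices); the rest of your sketch stands.
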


\bigbreak
\noindent
{\bf Acknowledgement.} The authors would like to acknowledge valuable discussions with Dr. Renjun Duan of the Chinese University of Hong Kong. This work was supported by the Fundamental Research Funds for the Central Universities and three grants from the National Natural Science Foundation of China under contracts 10925103, 11271160, and 11261160485, respectively.


\end{document}